\theoremstyle{plain}
\newtheorem{thm}{Theorem}[section]\crefname{thm}{Theorem}{Theorems}
\newtheorem{prop}[thm]{Proposition}\crefname{prop}{Proposition}{Propositions}
\newtheorem{lem}[thm]{Lemma}\crefname{lem}{Lemma}{Lemmas}
\newtheorem{cor}[thm]{Corollary}\crefname{cor}{Corollary}{Corollaries}
\theoremstyle{definition}
\newtheorem{defn}[thm]{Definition}\crefname{defn}{Definition}{Definitions}
\newtheorem{problem}[thm]{Problem}\crefname{problem}{Problem}{Problems}
\crefname{remark}{Remark}{Remarks}
\newtheorem{example}[thm]{Example}\crefname{example}{Example}{Examples}
\theoremstyle{plain}
\newtheorem*{ugpthmwc}{{\Cref{thm:algo wc}}}
\newtheorem*{ugpthmgeneral}{{\Cref{thm:algo general}}}
\newcommand{\C}{\mathbb{C}}
\renewcommand{\int}{\NOINTEGRALSHERE}
\newcommand{\R}{\mathbb{R}}
\newcommand{\Q}{\mathbb{Q}}
\newcommand{\Z}{\mathbb{Z}}
\newcommand{\T}{\mathrm{T}}
\newcommand{\poly}{\mathrm{poly}}
\renewcommand{\epsilon}{\varepsilon}
\newcommand{\sym}[1]{\operatorname{sym}({#1})}
\newcommand{\enclen}[1]{\langle {#1} \rangle}
\newcommand{\norm}[1]{\left\lVert{#1}\right\rVert}
\newcommand{\tnorm}[1]{\left|\mkern-1.5mu\left|\mkern-1.5mu\left|#1\right|\mkern-1.5mu\right|\mkern-1.5mu\right|}
\newcommand{\abs}[1]{\left\lvert{#1}\right\rvert}
\newcommand{\ip}[2]{\langle#1,#2\rangle}
\newcommand{\grad}{\nabla}
\renewcommand{\vec}[1]{#1}
\newcommand{\val}{\mathrm{val}}
\DeclareMathOperator{\spec}{spec}
\DeclareMathOperator{\ufc}{ufc}
\DeclareMathOperator{\GL}{GL}
\DeclareMathOperator{\diag}{diag}
\DeclareMathOperator{\KL}{KL}
\DeclareMathOperator{\capa}{cap}
\DeclareMathOperator{\interior}{int}
\DeclareMathOperator{\linspan}{span}
\DeclareMathOperator{\affspan}{aff}
\DeclareMathOperator{\conv}{conv}
\DeclareMathOperator{\Tr}{Tr}
\DeclareMathOperator{\relint}{relint}
\numberwithin{equation}{section}
\setlist[enumerate]{label=(\alph*),noitemsep}
\newif\ifdetailedbounds
\newcommand{\ugpthmgeneralcontent}{%
  There is an interior-point algorithm (\cref{algo:ipm general}) that, given as input an instance of the unconstrained GP problem (\cref{prob:approximate GP}) with shift~$\theta\in\conv\Omega$ and a lower bound $0 < \varphi_0 \leq \varphi$ on the facet gap, returns $\vec{x}_\delta \in \R^n$ such~that
  \begin{align*}
    F_\theta(\vec{x}_\delta) \leq F_\theta^* + \delta
  \end{align*}
  within
  \begin{align*}
  \ifdetailedbounds
  41 \sqrt{k} \log\left( 3600 \, k^2 n \frac N {\varphi_0} \frac1\delta \log^2\left( \frac{5k\beta}{\delta} \right) \right)
  =
  \fi
  O\left( \sqrt{k} \log\left( k n \frac N {\varphi_0} \frac1\delta \log\left( \frac{k\beta}{\delta} \right) \right) \right)
  \end{align*}
  iterations.
  The starting point is determined explicitly by the input, and every iteration is a Newton step with respect to a known function that depends on $\varphi_0$.
}
\newcommand{\ugpthmwccontent}{%
  There exists an interior-point algorithm (\cref{algo:ipm well-conditioned}) that, given as input a well-conditioned instance of the unconstrained GP problem with shift (\cref{prob:approximate GP}),
  returns~$\vec{x}_{\delta} \in \R^n$ such that
  \begin{align*}
    F_\theta(\vec{x}_{\delta}) \leq F_\theta^* + \delta
  \end{align*}
  within
  \begin{align*}
  \ifdetailedbounds
  36 \sqrt k \log\left( 1440 k^2 \frac{R_\theta}{r_\theta} \frac1\delta \log^2(5k\beta) \right)
  =
  \fi
  O\left(\sqrt k \log\left( k \frac{R_\theta}{r_\theta} \frac1\delta \log(k\beta) \right) \right)
  \end{align*}
  iterations, where $r_\theta$ and $R_\theta$ are the geometric quantities from~\cref{def:condition numbers} and $\beta$~is defined in \cref{eq:beta}.
  The starting point of the algorithm is determined explicitly by the input, and every iteration is a Newton step for a known function.
}
\title[Interior-point methods for unconstrained geometric programming]{Interior-point methods for unconstrained geometric programming and scaling problems}
\date{}
\author{Peter B\"urgisser}
\address{Institut f\"ur Mathematik, Technische Universit\"at Berlin}
\email{pbuerg@math.tu-berlin.de}
\author{Yinan Li}
\address{Centrum Wiskunde \& Informatica (CWI) and QuSoft}
\email{liyinan9252@gmail.com}
\author{Harold Nieuwboer}
\address{Korteweg--de Vries Institute for Mathematics and QuSoft, University of Amsterdam}
\email{h.a.nieuwboer@uva.nl}
\author{Michael Walter}
\address{Korteweg--de Vries Institute for Mathematics, Institute for Theoretical Physics, Institute for Language, Logic, and Computation, and QuSoft, University of Amsterdam}
\email{m.walter@uva.nl}
\begin{document}

\begin{abstract}
  We provide a condition-based analysis of two interior-point methods for unconstrained geometric programs, a class of convex programs that arise naturally in applications including matrix scaling, matrix balancing, and entropy maximization.
  Our condition numbers are natural geometric quantities associated with the Newton polytope of the geometric program, and lead to diameter bounds on approximate minimizers.
  We also provide effective bounds on the condition numbers both in general and under combinatorial assumptions on the Newton polytope.
  In this way, we generalize the iteration complexity of recent interior-point methods for matrix scaling and matrix balancing.
  Recently, there has been much work on algorithms for certain optimization problems on Lie groups, known as capacity and scaling problems.
  For commutative groups, these problems reduce to unconstrained geometric programs, which serves as a particular source of motivation for our work.
\end{abstract}
\maketitle

\section{Introduction}
Geometric programming is an optimization paradigm that generalizes linear programming and has a wide range of applications~\cite{duffin1967geometric,Boyd2007}.
In this paper, we are concerned with \emph{unconstrained geometric programs}.
These are optimization problems of the form
\begin{equation}\label{eq:gp intro}
\begin{aligned}
  \text{minimize} \quad & f(\vec z) \\
  \text{subject to} \quad & \vec z \in \R^n, \, \vec z > \vec{0},
\end{aligned}
\end{equation}
where $f(\vec z)$ is a \emph{posynomial} in positive real variables $z_1,\dots,z_n$.
That is,
\begin{align}\label{eq:posy}
  f(\vec{z}) = \sum_{i=1}^k q_i \vec{z}^{\omega_i} = \sum_{i=1}^k q_i \prod_{j=1}^n z_j^{\omega_{i,j}},
\end{align}
where the coefficients $q_i$ are positive and the exponents $\omega_{i,j}$ are real numbers.
In a general geometric program (GP), one adds posynomial inequality and monomial equality constraints.
Although posynomials are non-convex in general, they are convex in~$\vec x$ after the change of variables~$\vec z = e^{\vec x}$.
As such, they are the simplest family of \emph{geodesically convex} programming problems.
It is well-known that standard convex programming techniques, like the ellipsoid method or interior-point methods, can solve GP in polynomial time, see for instance~\cite{nesterov-nemirovskii,kortanek1997infeasible,andersen1998computational,boyd-vandenberghe,gurvits2004combinatorial,Singh-Vishnoi-14,straszak-vishnoi-bitcomplexity,nemirovski2005cone,karimi2018primal}.
Furthermore, it has been observed that interior-point methods are also efficient and robust in practice (cf.~\cite{boyd-vandenberghe,Boyd2007}).
However, explicit iteration complexity bounds are not readily available in the current literature.

In this paper, we present a detailed analysis of two interior-point algorithms for unconstrained GP in terms of natural geometric condition numbers.
Our first algorithm applies to instances that (roughly speaking) have a well-conditioned Newton polytope, while our second algorithm has no such assumption but instead relies on a novel condition number for the GP.
We also provide effective bounds on these condition numbers both in general and under suitable combinatorial assumptions.
The latter assumptions apply in particular to matrix scaling and balancing and, as a consequence, we match the iteration complexity of the recent work~\cite{cmtv-matrix-scaling}.
In fact, the recent interest in generalized scaling problems is a particular source of motivation for this paper, and we expand on this connection in~\cref{subsubsec:opti}.

\subsection{Computational problems}
Let us discuss the computational problems associated with~\eqref{eq:gp intro} in more detail.
We first introduce some terminology and notation.
We write $\vec{q}=(q_1,\dots,q_k)\in \R^k_{++}$ for the vector of \emph{coefficients} and $\Omega=\{\omega_1,\dots,\omega_k\}\subseteq\R^n$ for the set of \emph{exponents} of the posynomial~\eqref{eq:posy}.
By the logarithmic change of variables $e^{x_j} = z_j$, $j \in [n]$, and $e^F = f$, we can write~\eqref{eq:gp intro}~as%
\footnote{Throughout this paper $\log$ is the natural logarithm (so with respect to base $e$).}
\begin{equation}\label{eq:gp}
  F^* = \inf_{\vec x\in\R^n} F(x), \quad F(x) = \log \sum_{i=1}^k q_ie^{\ip{\omega_i}{\vec{x}}},
\end{equation}
where $\ip{\cdot}{\cdot}$ denotes the standard inner product in $\R^n$.
The objective $F(x)$ is a convex function.
It will in general not attain its infimum $F^*$, as the domain is unbounded.
The problem of unconstrained geometric programming is to approximate the infimum to arbitrary precision:

\begin{problem}[Unconstrained GP]\label{prob:approximate GP zero}
Given as input the exponents $\omega_1,\dots,\omega_k\in\R^n$, coefficients $\vec{q} \in \R^k_{++}$, and a precision $\delta\!\in\!(0,1)$, find $\vec{x}_{\delta}\in\R^n$ such that \!$F(\vec{x}_{\delta})\!\leq\!F^* + \delta$.
\end{problem}

\noindent
Clearly, any solution to this problem provides a $(1+2\delta)$-multiplicative and a $(2\norm q_1 \delta)$-additive approximation to the value of the original geometric program~\eqref{eq:gp intro}.

\Cref{prob:approximate GP zero} depends crucially on the \emph{Newton polytope} of~$f$, which is defined as the convex hull of the exponents $\Omega=\{\omega_1,\dots,\omega_k\}$ (recall that all coefficients $q_j$ are assumed nonzero).
Two well-known important properties are:
\begin{enumerate}
\item the infimum exists (i.e., $F^* > -\infty$) if and only if $\vec0\in\conv\Omega$, and
\item the infimum is attained (i.e., $F^*=F(x)$ for some $x\in\R^n$, so in particular $F^*$ is finite) if and only if $\vec0\in\relint\conv\Omega$.
\end{enumerate}
Property~(a) characterizes when \cref{prob:approximate GP zero} has a solution.
It follows from the observation that $F$ is unbounded from below if and only if there exists some $x\in\R^n$ such that $\ip{\omega_i}x<0$ for all $i\in[k]$, which in turn is equivalent to $0\not\in\conv\Omega$ by Farkas' lemma.
Thus, deciding whether $F^*$ is finite or not can be done by testing membership in the Newton polytope (a linear programming problem).

Property~(b) may be interpreted as characterizing when the instance is \emph{well-conditioned}.
To get some intuition, one can verify that the \emph{gradient} $\grad F(x)$ is a convex combination of the exponents $\omega_i$, with positive coefficients.
Thus if $F$ has a minimum then $0$ is in relative interior of the Newton polytope.
We give a quantitative version of the reverse implication in \cref{section:condition and diameter}.

By convexity, \cref{prob:approximate GP zero} is directly related to the problem of minimizing the gradient $\grad F(x)$.
We refer to this as the associated \emph{scaling} problem, as it captures the well-known polynomial scaling and matrix scaling problems as important special cases (see~\cref{subsubsec:matrix scaling} for details).

\begin{problem}[Scaling problem]\label{prob:scaling zero}
Given as input the exponents $\omega_1,\dots,\omega_k\in\R^n$, coefficients $\vec{q} \in \R_{++}^k$, and a precision $\epsilon > 0$, find $\vec{x}_\epsilon \in \R^n$ such that $\norm{\grad F(\vec{x}_\epsilon)}_2\leq\epsilon$.
\end{problem}

\noindent
The scaling problem is feasible for all $\epsilon>0$ precisely when $F$ is bounded from below, that is, when the Newton polytope contains the origin.

In many applications, which we will elaborate on in~\cref{subsec:motivation}, one is naturally interested in varying a given geometric program by translating all exponents by some fixed vector~$\theta\in\R^n$.
For this, let
\begin{align}\label{eq:log gp shifted}
  F^*_\theta=\inf_{\vec{x}\in\R^n}F_\theta(\vec{x}), \quad
  F_\theta(\vec{x}) = \log \sum_{i=1}^k q_ie^{\ip{\omega_i - \theta}{\vec{x}}}
  = F(x) - \ip \theta x.
\end{align}
When we refer to the Newton polytope of such a shifted problem, we always mean the convex hull of the original exponents $\Omega=\{\omega_1,\dots,\omega_k\}$.

In the remainder of this paper we will focus on solving the following computational problems, which generalize \cref{prob:approximate GP zero,prob:scaling zero}:

\begin{problem}[Unconstrained GP with shift]\label{prob:approximate GP}
Given as input a shift $\theta\in\R^n$, exponents $\omega_1,\dots,\omega_k\in\R^n$, $q \in \R^k_{++}$, and a precision $\delta\in(0,1)$, find $x_\delta\in\R^n$ such that $F_\theta(x_\delta) \leq F_\theta^* + \delta$.
\end{problem}

\begin{problem}[Scaling problem with shift]\label{prob:scaling}
Given as input a shift $\theta\in\R^n$, exponents $\omega_1,\dots,\omega_k\in\R^n$, $q \in \R_{++}^k$, and a precision $\epsilon > 0$, find $x_\epsilon \in \R^n$ such that $\norm{\grad F_\theta(x_\epsilon)}_2=\norm{\grad F(\vec x_\epsilon) - \theta}_2\leq\epsilon$.
\end{problem}

\noindent
Throughout we will always assume that the shift is contained in the Newton polytope (i.e., $\theta\in\conv\Omega$).
This assumption is natural, since otherwise the two problems have no solution (the latter for small~$\epsilon$), as follows from properties~(a) and (b).

\subsection{Motivations and prior work}\label{subsec:motivation}
Before discussing our results we discuss three important motivations from machine learning and optimization, which reduce to unconstrained GPs and have been subject of intense recent research.
This will also shed more light on the connection between \cref{prob:approximate GP,prob:scaling}.

\subsubsection{Entropy maximization}
The Lagrange dual of the convex optimization problem~\eqref{eq:log gp shifted} is given by an entropy maximization problem.
More precisely,
\begin{equation}\label{eq:entropy maximization}
F^*_\theta = \inf_{x\in\R^n} F_\theta(x) = \sup \biggl\{ -D_{\KL}(\vec{p}\|\vec{q}) \;:\; \sum_{i=1}^k p_i\omega_i=\theta,~\sum_{i=1}^k p_i=1,~\vec{p} \geq 0 \biggr\},
\end{equation}
where $D_{\KL}(\vec{p}\Vert\vec{q}) =\sum_{i=1}^k p_i\log \frac{p_i}{q_i}$ denotes the \emph{Kullback--Leibler (KL) divergence} between a probability distribution $\vec{p}$ and the distribution~$\vec{q}$ (which need not be normalized).
Thus, the dual program~\eqref{eq:entropy maximization} is feasible when $\theta$ is in the Newton polytope, and the optimal solution is a probability distribution on~$\Omega \cong [k]$ with mean~$\theta$ that minimizes the KL divergence to the initial distribution~$\vec{q}$.
When $\vec q=(1,\dots,1)$ is the all-ones vector, $-D_{\KL}(\vec{p}\|\vec{q}) = \sum_{i=1}^k p_i \log \frac1{p_i}$ is the Shannon entropy of $p$.
In this case, \cref{eq:entropy maximization} amounts to the discrete \emph{entropy maximization} problem which naturally arises in machine learning and statistics, motivated by the maximum entropy principle~\cite{PhysRev.106.620,PhysRev.108.171}.

To solve the entropy maximization problem, \cite{Singh-Vishnoi-14,straszak-vishnoi-bitcomplexity} proposed ellipsoid methods for the equivalent geometric program~\eqref{eq:log gp shifted} that are tractable even when~$k$ is large.
They focused on the case that~$\Omega$ consists of integer vectors (which is already of substantial interest) and gave a priori diameter bounds as required for the ellipsoid method.
In~\cite{Singh-Vishnoi-14}, it was shown that
if $\theta$ is at a distance $\eta>0$ from the boundary of the Newton polytope then there is a minimizer~$\vec{x}^*$ of norm $\norm{\vec{x}^*}_2\leq\frac{\log k}{\eta}$.
In~\cite{straszak-vishnoi-bitcomplexity}, a diameter bound was obtained in terms of the \emph{unary facet complexity} of the Newton polytope:
if $\conv\Omega$ can be described by linear inequalities with integer coefficients in $\{-M,\dots,M\}$, then for \emph{any} $\theta\in\conv\Omega$ there is a $\delta$-approximate minimizer $\vec{x}_\delta$ to the problem~\eqref{eq:log gp shifted} with $\norm{\vec{x}_\delta}_2\leq R$ where $R = \poly(n,M,\log\frac{1}{\delta})$.
This bound is particularly useful if $\theta$ is very close to (or on) the boundary of the Newton polytope.
Below we show how to generalize these diameter bounds to the case that the Newton polytope is not integral.

More recently,~\cite{celis2019fair} discusses a second-order box-constrained Newton method for entropy maximization, based on results from~\cite{allen2017much}.
The number of required iterations for approximately minimizing the objective is polynomial in $R$, $\log \frac{1}{\delta}$ and a parameter known as the \textit{robustness parameter} of the objective.
Robustness of the objective gives local quadratic approximations, and the robustness parameter controls the diameter of the region in which these approximations hold.
A more general version of this second-order method is presented in~\cite{bfgoww-noncommutative-optimization}.

\subsubsection{Matrix scaling}\label{subsubsec:matrix scaling}
Let $M \in \R^{n \times n}$ be a matrix with non-negative entries, and suppose we are given non-negative vectors $\vec{r}, \vec{c} \in \R^n$ with $\norm{\vec r}_1 = \norm{\vec c}_1 = 1$.
Let $r(M)$ denote the vector of row sums of~$M$, and let $c(M)$ denote the vector of column sums.
Then the \emph{$(r,c)$-matrix scaling} problem asks us to rescale the rows and columns of~$M$ so that its row and column sums are given by $\vec r$ and $\vec c$, respectively.
That is, we wish to find positive diagonal matrices $L, R \in \R^n$ such that $r(LMR) = r$ and $c(LMR) = c$.
This problem is very well-studied and has a wide range of applications (see, e.g., \cite{allen2017much,cmtv-matrix-scaling} and references therein).
A strongly polynomial algorithm based on the Sinkhorn-Knopp algorithm is presented in~\cite{lsw98}, and recent works provide very fast algorithms~\cite{allen2017much,cmtv-matrix-scaling}.

The connection to unconstrained GP is as follows.
Consider the objective
\begin{align}\label{eq:matrix scaling posy}
  f(x_1, \dotsc, x_n, y_1, \dotsc, y_n) = \sum_{i,j=1}^n M_{ij} e^{x_i + y_j}
\end{align}
in $2n$ variables and let $\theta=(r,c)$.
Then,
\begin{align*}
  \nabla F_\theta(x,y) = \frac{\sum_{i,j} N_{ij} (e_i,e_j)}{\sum_{i,j} N_{ij}} - \theta = \frac{\bigl(r(N),c(N)\bigr)}{\norm N_1} - (r,c),
\end{align*}
where $N = LMR$ is the matrix rescaled by $L = \diag(e^{x_i})$ and $R = \diag(e^{y_j})$.
This shows that finding an approximate $(r,c)$-matrix scaling is precisely equivalent to \cref{prob:scaling} and can be achieved by solving a geometric program, as in \cref{prob:approximate GP}.
Indeed, state-of-the-art algorithms for matrix scaling (as well as the very similar \emph{matrix balancing} problem) are based on minimizing~$F_\theta$ or closely related functions, and our results can be understood as a generalization of the interior point method of~\cite{cmtv-matrix-scaling} to arbitrary unconstrained GPs.

\subsubsection{Optimization over linear group actions}\label{subsubsec:opti}
Let $G$ be a continuous group acting linearly on a complex vector space $V=\C^k$ by a representation $\pi\colon G\to\GL(V)$, with $\GL(V)$ the group of invertible linear operators.
The action partitions the vector space into orbits $\mathcal{O}_{\vec{v}} = \{ \pi(g)\vec{v} : g \in G\}$ for $\vec{v} \in V$.
A basic algorithmic problem is then to compute the smallest norm of any vector in an orbit, known as the \emph{capacity} $\capa(\vec{v})=\inf\{\norm{\vec{w}}_2:~\vec{w}\in \mathcal{O}_\vec{v}\}$ of~$\vec{v}$.

The capacity problem for suitable groups and actions captures natural important problems in computational complexity, algebra, analysis, quantum information, and statistics: see the recent developments~\cite{goww-operator-scaling,Garg2018,azglow-operator-scaling,brgisser_et_al:LIPIcs:2018:8351,10.1145/3188745.3188932,10.1145/3188745.3188794,8948667,bfgoww-noncommutative-optimization,franks2020rigorous,franks2020minimal,amendola2020invariant,derksen2020maximum}. 
In the special case when $G$ is commutative, estimating the capacity of a vector can be recast as an unconstrained GP~\cite{bfgoww-noncommutative-optimization}.
We briefly recall the connection.
Let $G=\T(n)$ be the group of invertible \emph{diagonal} complex $n\times n$ matrices.
Under very mild assumptions, $\pi$ has the following simple form: there exists an orthonormal basis $\vec{e}_1,\dots,\vec{e}_k$ of~$V$ and integer vectors~$\omega_1,\dots,\omega_k\in\Z^n$ such that $\pi(g) e_i = \prod_{j=1}^n g_j^{\omega_{i,j}} e_i$ for any $i\in[k]$ and $g=\diag(g_1,\dots,g_n)\in\T(n)$.
Accordingly, given an arbitrary vector $v=\sum_{i=1}^k v_i e_i$ we have
\begin{align*}
  \norm{\pi(g) v}_2^2
= \sum_{i=1}^k \abs{v_i}^2 \prod_{j=1}^n \abs{g_j}^{2\omega_{i,j}},
\end{align*}
By replacing $q_i = \abs{v_i}^2$ and $e^{x_j}=\abs{g_j}^2$, we see that computing $\log\capa(\vec{v})$ reduces to solving an unconstrained GP of the form~\eqref{eq:gp}.

To make this connection concrete, we briefly sketch how the matrix scaling problem discussed earlier arises as a special case.
Consider the group $G=\T(n) \times \T(n) \cong \T(2n)$ and the vector space~$V=\C^{n\times n}$ of complex $n\times n$-matrices, endowed with the Frobenius norm.
The action is given by $\pi(g,h)A = g A h$.
Then,
\begin{align*}
    \norm{\pi(g,h) A}_F^2
= \sum_{i,j} \abs{g_i A_{ij} h_j}^2.
\end{align*}
Therefore, substituting $\abs{g_i}^2 = e^{x_i}$, $\abs{h_j}^2 = e^{y_j}$, and $M_{ij} = \abs{A_{ij}}^2$ returns us precisely to the situation of \cref{eq:matrix scaling posy}.

Interestingly, most of the concepts used in the above discussions are not restricted to the case where $G$ is commutative.
We briefly recall some of the concepts and refer the readers to~\cite{bfgoww-noncommutative-optimization} for more details.
Consider a `nice' noncommutative group such as $G=\GL(n)$ acting on $V$ by $\pi\colon G\to\GL(V)$.
The function $F(g) = \log\norm{\pi(g)\vec{v}}_2$ is not convex but \emph{geodesically convex}.
Geodesic convexity can be thought of as a generalization of convexity in Euclidean spaces to arbitrary Riemannian manifolds.
Here this means that the function $F(\exp(tH) g)$ is convex in $t\in\R$ for any Hermitian $n\times n$ matrix~$H$ and $g\in\GL(n)$, where $\exp(\cdot)$ denotes the matrix exponential map.
We can minimize geodesically convex functions on Riemannian manifolds just as in the Euclidean case by minimizing the gradient~$\nabla F(g)$, which in the present context is the Hermitian $n\times n$ matrix defined by the property:
$\Tr(\nabla F(g) H)=\partial_{t=0}\log\norm{\pi(\exp(tH) g\vec{v}}_2$.
A surprising result is that the closure of the set $\{\spec(\nabla F(g)) : g \in G \}$, where $\spec(\cdot)$ denotes the vector of eigenvalues of a Hermitian matrix in non-increasing order, is always a convex polytope, known as a \emph{moment polytope}~\cite{ness-mumford,ASENS_1973_4_6_4_413_0,atiyah-convexity,guillemin-sternberg-convexity,kirwanthesis}!
Note that when restricted back to the commutative group setting, all the aforementioned concepts coincide with the ordinary ones (geodesically convex $\to$ convex, Riemannian gradient $\to$ gradient, moment polytope $\to$ Newton polytope).
Thus, efficient algorithms for solving unconstrained GP and the corresponding scaling problem are an important step towards a deeper understanding of the capacity problem of general (noncommutative) group actions.
This serves as an important motivation for our work.
In this spirit, we note that the box-constrained Newton methods of~\cite{allen2017much,cmtv-matrix-scaling,celis2019fair} have recently been generalized to general noncommutative capacity and scaling problems~\cite{azglow-operator-scaling,bfgoww-noncommutative-optimization}.


\subsection{Our results}
We provide two interior-point algorithms for unconstrained geometric programming and the corresponding scaling problem (\cref{prob:scaling zero,prob:scaling,prob:approximate GP,prob:approximate GP zero}).
The two algorithms differ in that the first algorithm assumes \emph{well-conditioned} instances, where the shift~$\theta$ is contained in the relative interior of the Newton polytope, while the second algorithm applies to any point in the Newton polytope.
For each algorithm, we give natural condition numbers associated with the input that allow us to tightly bound the number of iterations required to find an approximate solution.
For rational inputs, we accompany these results with a priori bounds for these condition numbers that are exponential in the encoding length of the input (resulting in polynomial-time algorithms), and we explain how to obtain polynomial bounds in special situations.
Our results improve over the optimization algorithms of~\cite{bfgoww-noncommutative-optimization}, which apply to general capacity and scaling problems, and our iteration complexities generalize recent results for matrix scaling and balancing~\cite{cmtv-matrix-scaling}.

\subsubsection{Well-conditioned instances}
We first discuss our results in the well-conditioned situation.
Here we use the following natural condition measures:

\begin{defn}[Geometric condition measures]
\label{def:condition numbers}
For an instance of the unconstrained GP or scaling problem with $\Omega = \{\omega_1, \dots, \omega_k\}\subseteq\R^n$ and shift $\theta \in \conv \Omega$, define
the \emph{distance from $\theta$ to the (relative) boundary of the Newton polytope} as
\begin{align}
\nonumber
  r_{\theta} &= \max \{ r \geq 0 : B(\theta, r) \cap \affspan \Omega \subseteq \conv\Omega \} = d(\theta, \partial\conv\Omega)
\intertext{where $B(\theta,r)$ denotes the closed ball centered at $\theta$ with radius $r$, and
$\affspan \Omega$ denotes the affine hull. 
Equivalently, $r_\theta$ is the radius of the largest ball about $\theta$ contained in the polytope.
Similarly, define the \emph{radius of the smallest enclosing ball about $\theta$}~as}
\label{eq:R_theta}
  R_{\theta}
&= \min \{ R \geq 0 : \conv\Omega \subseteq B(\theta, R) \}
= \max_{i\in[k]} \norm{\omega_i - \theta}_2.
\end{align}
We say that the instance is \emph{well-conditioned} if $r_\theta > 0$, otherwise it is called \textit{ill-conditioned}.
Thus, the instance is well-conditioned when $\theta$ is in the relative interior of the Newton polytope, and ill-conditioned if it is on the boundary.
\end{defn}

We remark that the ratio $R_\theta/r_\theta$ can be understood as a natural scale-invariant condition number of the instance.
See \cref{figure:geometric condition measures} for an illustration.

\begin{figure}
  \centering
  \includegraphics[height=3cm]{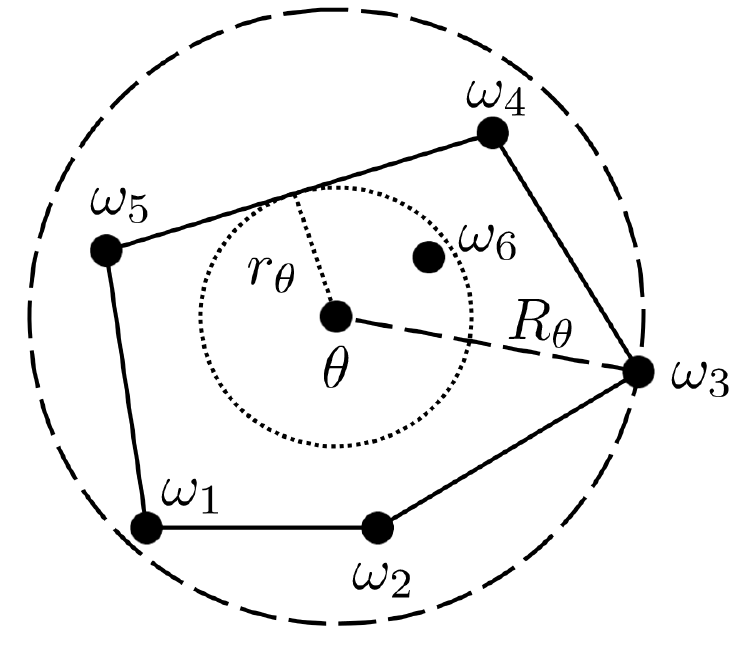}
  \caption{An illustration of $r_\theta$ and $R_\theta$, the distance from $\theta$ to the boundary and the radius of the smallest containing ball about $\theta$, respectively.}
  \label{figure:geometric condition measures}
\end{figure}

The geometric condition measure~$r_\theta$ is closely related to a condition measure due to Goffin~\cite{Goffin-measure}, which is widely used in the context of testing polyhedral cone feasibility (e.g.,~see~\cite{burgisser2013condition,doi:10.1287/moor.2019.1011}).
Let~$A\in \R^{n\times k}$ be a matrix with nonzero columns~$\vec{a}_1,\dots, \vec{a}_k$, and let $\hat a_i = a_i / \! \norm{a_i}_2$.
The \emph{Goffin measure} $\rho(A)$ of $A$ is defined as
\[
\rho(A)=\max_{\vec{x}\in\text{im}(A)\setminus\{\vec{0}\}}\min_{j\in[n]} \frac{\ip{\hat{\vec a}_j}{\vec{x}}}{\norm{\vec{x}}_2}.
\]
Its inverse has been called the GCC condition number~\cite[\S{}6.7]{burgisser2013condition}.
To see the connection, note that the distance to the boundary can be reformulated as
\[
  r_\theta = \min_{\vec{x} \in W \setminus \{0\}} \max_{i \in [k]} \, \frac{\ip{\omega_i - \theta}{\vec{x}}}{\norm{x}_2},
\]
where $W$ is the linear span of the $\omega_i - \theta$ in $\R^n$ (see \cref{lem:r_theta dual}).
Therefore, if every $\omega_i - \theta$ is a unit vector, then $r_\theta/R_\theta = r_\theta = - \rho(A)$,
where $A$ is the matrix with columns $a_i = \omega_i - \theta$. 
As such, $r_\theta$ can in general be viewed as an \emph{unnormalized} Goffin measure.
Indeed, while normalizing the vectors $\vec a_i$ does not affect the solvability of the corresponding conic feasibility problem, the relative sizes of the vectors $\omega_i - \theta$ directly influence the value of the corresponding geometric program, and so it is natural not to normalize in our setting.

Besides the geometric condition measures, we will also need the following quantity which captures the condition of the distribution~$q\in\R_{++}^k$:
\begin{align}\label{eq:beta}
  \beta = \frac{\norm{\vec q}_1}{\min_{i\in [k]} q_i},
\end{align}
where $\norm{\vec{q}}_1=\sum_{i=1}^k q_i$ denotes the $\ell_1$-norm.
In general, $k \leq \beta \leq \frac{\max_{i\in[k]} q_i}{\min_{i\in[k]} q_i} k < \infty$.
We note that since $\theta \in \conv \Omega$,
it holds that
\begin{align}\label{eq:lower bound optimal value}
  F_\theta^* \geq \log\min_{i\in[k]} q_i,
\end{align}
hence $F_\theta(0) - F_\theta^* \leq \log(\beta)$.

Our first result is a bound on the number of iteration steps of a natural interior-point method (IPM) which solves unconstrained GP for well-conditioned instances.

\begin{thm}\label{thm:algo wc}
\detailedboundsfalse\ugpthmwccontent
\end{thm}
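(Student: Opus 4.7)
The plan is to apply a standard short-step path-following interior-point method to the entropic dual~\eqref{eq:entropy maximization}. Under the well-conditioned hypothesis $r_\theta > 0$, the dual feasible set $P_\theta := \{ p \in \R_{>0}^k : \sum_{i=1}^k p_i = 1,\ \sum_{i=1}^k p_i \omega_i = \theta \}$ has nonempty relative interior, and I would equip it with the logarithmic barrier $\phi(p) = -\sum_{i=1}^k \log p_i$, which is a $k$-self-concordant barrier on $\R_{>0}^k$ and inherits that parameter when restricted to the affine hull of $P_\theta$. The central path
\[
  p^*(\eta) = \arg\min_{p \in P_\theta} \Phi_\eta(p), \qquad \Phi_\eta(p) := \eta \, D_{\KL}(p \| q) + \phi(p),
\]
interpolates between the analytic center of $P_\theta$ and the optimal dual solution as $\eta \to \infty$, with a primal iterate $x^*(\eta) \in \R^n$ read off from the Lagrange multiplier for the equality $A p = \theta$ (where $A=[\omega_1|\cdots|\omega_k]$). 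The duality gap at $(x^*(\eta), p^*(\eta))$ decays like $k/\eta$. The ``known function'' in each iteration of the theorem statement is $\Phi_\eta$ for the current value of $\eta$.

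The iterations themselves are the classical short-step update: multiply $\eta$ by $1 + c/\sqrt{k}$ for a fixed small $c$, and take one Newton step on $\Phi_\eta$, keeping the iterate within a fixed Newton-proximity of the new central-path point. Nesterov--Nemirovskii path-following analysis then ensures that this proximity is preserved, so that $O(\sqrt{k}\log(\eta_{\mathrm{end}}/\eta_0))$ iterations suffice to move from $\eta_0$ to $\eta_{\mathrm{end}}$. To reach duality gap~$\delta$ I take $\eta_{\mathrm{end}} = \Theta(k/\delta)$.

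For the explicit starting point I would take $x_0 = 0$, which by~\eqref{eq:lower bound optimal value} gives initial primal gap $F_\theta(0) - F_\theta^* \leq \log\beta$; the corresponding dual seed is $p_0 \propto q$. Since $p_0 \notin P_\theta$ in general, a preliminary centering phase is needed to reach a point close to $p^*(\eta_0)$ for a suitable small starting parameter $\eta_0$. This phase is again of short-step type, and its cost is another $O(\sqrt{k})$ iterations per logarithmic unit of initial distance, after which the main path-following phase described above takes over.

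The main obstacle is to bound $\eta_{\mathrm{end}}/\eta_0$ together with the initial centering distance in terms of the condition numbers from \cref{def:condition numbers}. Here the geometry enters: $r_\theta$ lower-bounds the inradius of $P_\theta$ and so controls the conditioning of the log-barrier Hessian near the seed, while $R_\theta$ upper-bounds the diameter of $\Omega$ about $\theta$ and hence the magnitudes of the gradients of $D_{\KL}$ on $P_\theta$. Combining these estimates with the initial primal gap $\log\beta$ produces a bound of the form $k(R_\theta/r_\theta)\log(k\beta)/\delta$ on the relevant ratio, which fed into the $O(\sqrt{k}\log(\cdot))$ iteration count yields exactly the claimed complexity. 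Once this geometric-to-analytic translation is in place, the remainder is the standard self-concordant path-following argument.
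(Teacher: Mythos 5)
Your proposal works in the dual: short-step path-following on the entropic program~\eqref{eq:entropy maximization} with the plain log-barrier on $P_\theta$. This is a genuinely different route from the paper, which stays in the primal and \emph{linearizes} the objective by lifting to $(x,z,t)\in D_\theta$ (\cref{subsec:well cond}): the sum of exponential-cone, half-space, and logarithmic barriers is an explicit $(2k+2)$-self-concordant barrier, boundedness of $D_\theta$ follows from $r_\theta>0$ via \cref{lem:wc large x}, and the symmetry of the explicit feasible start~\eqref{eq:starting point} is controlled by \cref{prop:general symmetry bound}, so Renegar's framework applies without modification.

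As written, your route has three gaps. First, $\Phi_\eta = \eta\,D_{\KL}(\cdot\|q) + \phi$ has a \emph{nonlinear} leading term, so the vanilla short-step analysis for linear objectives does not apply: the Hessian $\eta\nabla^2 D_{\KL}+\nabla^2\phi$ varies with $\eta$, and preserving Newton proximity after the $\eta$-update requires either a compatibility estimate between the entropy term and the barrier (this exists --- $p\log p$ is $1$-compatible with $-\log p$ in the Nesterov--Nemirovskii sense --- but you must invoke or prove it) or an epigraph reformulation, which is precisely what the paper's $(x,z,t)$-lift achieves on the primal side. Without one of these, the claimed $O(\sqrt k\log(\eta_{\mathrm{end}}/\eta_0))$ count is unsupported. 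Second, the assertion that $r_\theta$ lower-bounds the inradius of $P_\theta$ is neither established nor literally correct: $r_\theta$ lives in the $n$-dimensional moment space while the inradius lives in $\R^k$. One can build $p^0\in P_\theta$ with $\min_i p^0_i \gtrsim r_\theta/(k R_\theta)$ by mixing any feasible $p$ with the uniform distribution and pushing the mean back to $\theta$, and the extra $k$ and $R_\theta$ factors must be tracked (they are absorbed by the outer logarithm, matching the paper's $\log(k R_\theta/r_\theta)$), but you need to make that argument rather than assert it. Third, your seed $p_0\propto q$ is infeasible ($A p_0\neq\theta$ in general), so the ``preliminary centering phase'' is not an instance of Renegar's preliminary stage, which requires a strictly feasible start; you would need a Phase~I procedure or an explicit interior point of $P_\theta$, whereas the paper simply writes down an explicit strictly interior point of $D_\theta$.
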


We emphasize that it is \emph{not} necessary to provide a lower bound on $r_\theta$ as input.
The algorithm follows the interior-point method framework of~\cite{nesterov-nemirovskii,renegar-interior-point}, which consists of a \emph{preliminary stage} and a \emph{main stage}.
The preliminary stage uses a starting point that is easily computed in terms of the input data, and outputs a starting point for the main stage within $O( \sqrt k\log( k \, \frac{R_\theta}{r_\theta} \log(k\beta) ) )$ Newton iterations.
The main stage then produces a sequence of points $x_0, x_1, \dotsc$ such that $F_\theta(x_j) - F_\theta^* \leq C \log(k\beta) e^{-\frac{j c}{\sqrt{k}}}$ for some constants $c,C > 0$, implying the claimed iteration bound.

The same algorithm along with a known result relating the precision for geometric programming to the precision required to solve the scaling problem gives the following result (see \cref{subsec:scaling}).

\begin{cor}\label{cor:algorithm for scaling}
  There exists an algorithm that, given as input a well-conditioned instance of the scaling problem with shift (\cref{prob:scaling}), returns~$\vec{x}_{\epsilon} \in \R^n$ such that
  \begin{align*}
    \norm{\grad F_\theta(\vec{x}_\epsilon)}_2 = \norm{\grad F(\vec{x}_\epsilon) - \theta}_2 \leq \epsilon
  \end{align*}
  within
  \begin{align*}
    O\left(\sqrt k \log\left( k \frac{R_\theta}{r_\theta} \frac{R_\theta}{\epsilon} \log(k\beta) \right) \right)
  \end{align*}
  iterations.
\end{cor}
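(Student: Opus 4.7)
The plan is to reduce \cref{prob:scaling} to \cref{prob:approximate GP} and then invoke \cref{thm:algo wc}. The key observation is that the gradient of $F_\theta$ is Lipschitz with constant at most $R_\theta^2$, so an objective-value approximation translates into a gradient-norm approximation with only a mild loss in precision.

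First I would bound the Hessian of $F_\theta$. Since $F_\theta(x) = F(x) - \ip{\theta}{x}$, we have $\grad^2 F_\theta = \grad^2 F$, and a direct computation identifies this with the covariance matrix of the distribution $p_i(x) = q_i e^{\ip{\omega_i}{x}} / \sum_j q_j e^{\ip{\omega_j}{x}}$ on the exponents. Since variance is bounded above by the second moment about any point, for every unit vector $u \in \R^n$,
\begin{equation*}
  u^\top \grad^2 F_\theta(x)\, u \;\leq\; \sum_{i=1}^k p_i(x) \ip{\omega_i - \theta}{u}^2 \;\leq\; \max_{i\in[k]} \norm{\omega_i - \theta}_2^2 \;=\; R_\theta^2,
\end{equation*}
so $\grad F_\theta$ is $R_\theta^2$-Lipschitz.

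Because the instance is well-conditioned ($r_\theta > 0$), property (b) from the introduction ensures that $F_\theta^*$ is attained at some minimizer $x^* \in \R^n$. Applying the standard descent-lemma inequality for $L$-smooth convex functions with $L = R_\theta^2$ (obtained, e.g., by evaluating $F_\theta$ at $x - \frac{1}{R_\theta^2} \grad F_\theta(x)$ and comparing with $F_\theta^*$) then yields
\begin{equation*}
  \norm{\grad F_\theta(x)}_2^2 \;\leq\; 2 R_\theta^2 \bigl( F_\theta(x) - F_\theta^* \bigr) \qquad \text{for all } x \in \R^n.
\end{equation*}

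Setting $\delta = \epsilon^2 / (2 R_\theta^2)$ and invoking \cref{thm:algo wc} with this precision produces a point $x_\delta$ with $F_\theta(x_\delta) - F_\theta^* \leq \delta$, and hence $\norm{\grad F_\theta(x_\delta)}_2 \leq \epsilon$. Substituting this choice of $\delta$ into the iteration bound of \cref{thm:algo wc} gives
\begin{equation*}
  O\!\left(\sqrt{k}\, \log\!\left( k \,\frac{R_\theta}{r_\theta}\, \frac{R_\theta^2}{\epsilon^2}\, \log(k\beta) \right)\right) \;=\; O\!\left(\sqrt{k}\, \log\!\left( k \,\frac{R_\theta}{r_\theta}\, \frac{R_\theta}{\epsilon}\, \log(k\beta) \right)\right),
\end{equation*}
after using $\log(R_\theta^2/\epsilon^2) = 2 \log(R_\theta/\epsilon)$ and absorbing the factor of $2$ into the big-$O$. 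The only nontrivial ingredient is the Hessian bound, which is routine for log-sum-exp objectives; global validity of the smoothness-to-gradient inequality relies on attainment of $F_\theta^*$, which is exactly what well-conditioning provides. The remaining steps are direct substitutions.
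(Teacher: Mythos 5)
Your proposal is correct and follows essentially the same route as the paper: establish $R_\theta^2$-smoothness of $F_\theta$ via the covariance bound on the Hessian, apply the descent lemma to convert a $\delta$-approximate objective value into an $\epsilon$-approximate gradient with $\delta = \epsilon^2/(2R_\theta^2)$, and substitute into the iteration bound of \cref{thm:algo wc}. One small remark: the inequality $\norm{\grad F_\theta(x)}_2^2 \leq 2R_\theta^2(F_\theta(x) - F_\theta^*)$ does not actually require $F_\theta^*$ to be attained, only to be finite (which already holds since $\theta \in \conv\Omega$), so invoking well-conditioning at that step is unnecessary, though harmless here.
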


\subsubsection{General instances}
We now discuss our results for general instances (well-conditioned or not).
Here we provide an interior-point algorithm that approximates the unconstrained GP to arbitrary precision with an iteration complexity bound that is \emph{independent} of~$\theta$.
For this, we prove a $\theta$-independent diameter bound for approximate minimizers.
The quantity that controls our bound is the following.

\begin{defn}[Facet gap]\label{def:intro facet gap}
  Let $\Omega \subseteq \R^n$ be a finite set.
  The \emph{facet gap} $\varphi > 0$ of~$\Omega$ is the smallest distance from any $\omega \in \Omega$ to the affine span of any facet of $\conv \Omega$ not containing $\omega$.
  That is, it is the largest $\varphi > 0$ such that
  \begin{align*}
    d(\omega, \affspan F) \geq \varphi
  \end{align*}
   for any facet $F \subseteq \conv \Omega$ and $\omega \in \Omega \setminus F$.
\end{defn}

\begin{figure}
  \centering
  \includegraphics[height=3.5cm]{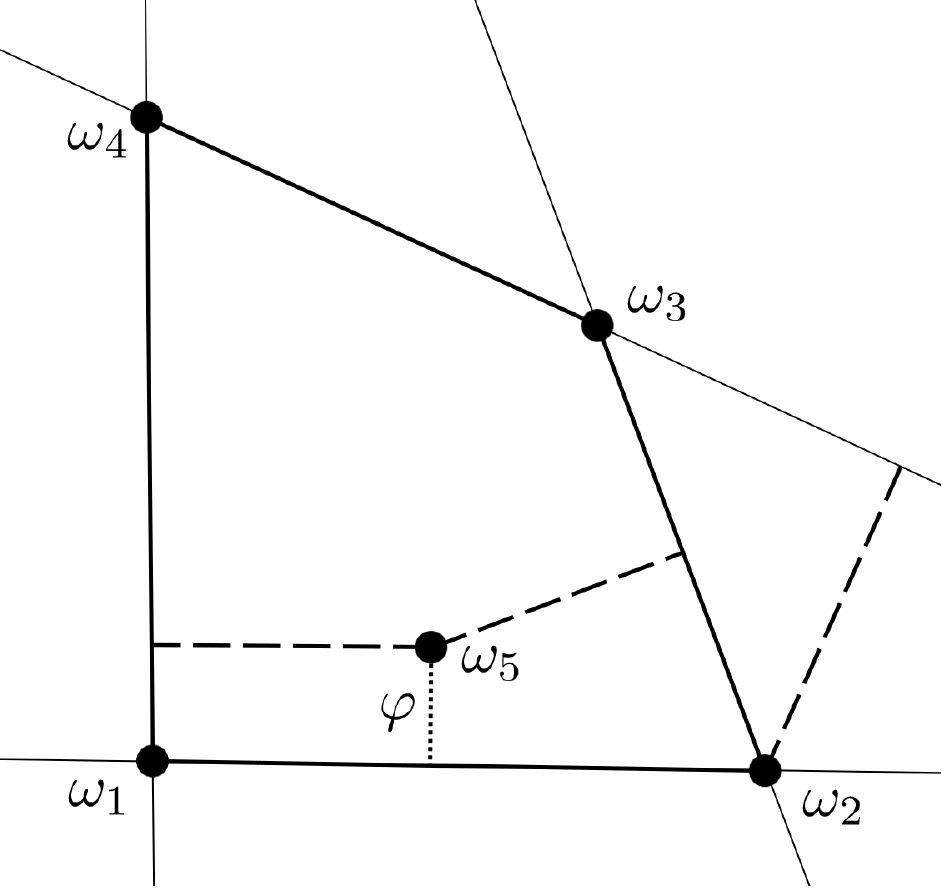}
  \caption{An illustration of the facet gap.
  For each facet of the Newton polytope, its affine hull is indicated by a solid thin line.
  The dashed line segments indicate the shortest distance between each affine hull and the closest $\omega_i$ not contained in the affine hull.
  The shortest line segment is dotted; its length is the facet gap~$\varphi$.}
  \label{figure:facet gap one}
\end{figure}

See \cref{figure:facet gap one} for an illustration.
The facet gap does not just depend on $\conv \Omega$, i.e., it is not a quantity that is purely controlled by the geometry of the Newton polytope.
We provide an example that shows that this is necessary for any diameter bound in the ill-conditioned setting (\cref{example:facet gap tight}).

Our diameter bound in terms of the facet gap (\cref{thm:facet gap radius bound}) generalizes a $\theta$-independent diameter bound obtained in~\cite{straszak-vishnoi-bitcomplexity} for \emph{integral} $\Omega \subseteq \Z^n$ to arbitrary $\Omega \subseteq \R^n$, with only small modifications to the proof.
The quantity that controls their diameter bound is called the \emph{unary facet complexity} of the Newton polytope, denoted by $\ufc$.
We recover their diameter bound by showing that, in the integral case, the facet gap and the unary facet complexity are related by~$\varphi^{-1} \leq \sqrt{n} \cdot \ufc$.
See \cref{section:condition and diameter} for details.
Finally, we denote the \emph{diameter} of the Newton polytope by
\begin{align}\label{eq:D}
  N = \max_{i\neq j} \norm{\omega_i - \omega_j}_2.
\end{align}
Our algorithmic result is then the following.

\begin{thm}\label{thm:algo general}
\detailedboundsfalse\ugpthmgeneralcontent
\end{thm}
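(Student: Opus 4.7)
The plan is to reduce the general case to the well-conditioned setting of \cref{thm:algo wc} by augmenting the input with auxiliary monomials whose positions and coefficients depend on the facet-gap bound $\varphi_0$ and the target precision $\delta$. The iteration count then follows from \cref{thm:algo wc} applied to the augmented instance, combined with the facet-gap diameter bound \cref{thm:facet gap radius bound}.

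First, I would construct an augmented exponent set $\Omega' = \Omega \cup \{\theta \pm \varphi_0 e_i : i \in [d]\}$, where $e_1,\dots,e_d$ is an orthonormal basis of $\affspan(\Omega) - \theta$, and assign each of the $2d$ new exponents a uniform coefficient $\eta > 0$ to be chosen below (keeping the original coefficients unchanged). Write $F'_\theta$ for the resulting shifted log-sum-exp objective. By construction the added points form a cross-polytope of radius $\varphi_0$ about $\theta$ inside $\affspan(\Omega)$, hence the augmented instance is well-conditioned with
\[ r_\theta' \geq \varphi_0, \qquad R_\theta' \leq \max(R_\theta,\varphi_0) \leq N, \qquad k' = k + 2d \leq k + 2n, \]
and $F_\theta'(x) \geq F_\theta(x)$ pointwise.

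Next I would choose $\eta$. Invoking \cref{thm:facet gap radius bound} yields a radius $R = \poly(n, N/\varphi_0, \log(k\beta/\delta))$ such that some $(\delta/3)$-approximate minimizer of the original $F_\theta$ lies in $B(0,R)$. I take $\eta$ small enough that the $2d$ augmenting terms contribute at most $\delta/3$ to the log-sum-exp value throughout $B(0,R)$; a choice of order $\eta \sim \delta\, e^{-\varphi_0 R}(\min_i q_i)/n$ suffices. This yields $(F_\theta')^* \leq F_\theta^* + \delta/3$, so combined with the pointwise bound $F_\theta \leq F_\theta'$ every $(\delta/3)$-approximate minimizer of $F_\theta'$ is a $\delta$-approximate minimizer of $F_\theta$. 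The cost is a growth of the coefficient-condition parameter, yielding $\log \beta' = O(\log(k\beta/\delta) + \varphi_0 R) = O(\log(k\beta/\delta) \cdot \poly(n, N/\varphi_0))$.

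Finally I would apply \cref{thm:algo wc} to the augmented instance at precision $\delta/3$; its iteration complexity is
\[ O\!\left(\sqrt{k'}\,\log\!\left(k'\,\tfrac{R_\theta'}{r_\theta'}\,\tfrac{1}{\delta}\,\log(k'\beta')\right)\right). \]
Substituting the bounds above and absorbing polynomial factors into the outer logarithm yields the claimed
\[ O\!\left(\sqrt{k}\,\log\!\left(k\,n\,\tfrac{N}{\varphi_0}\,\tfrac{1}{\delta}\,\log\!\left(\tfrac{k\beta}{\delta}\right)\right)\right), \]
using $k' = O(k+n)$ and $\sqrt{k+n} = O(\sqrt k)$ in the typical nondegenerate regime $n \leq k$ (and otherwise at a constant-factor cost). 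The Newton steps are with respect to the fixed augmented barrier, which depends on $\varphi_0$ through the construction of $\Omega'$; the starting point is inherited from \cref{thm:algo wc}.

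The main obstacle is the quantitative balancing of $\eta$: too small an $\eta$ inflates $\log \beta'$ beyond the advertised $\log(k\beta/\delta) \cdot \poly(n, N/\varphi_0)$, while too large an $\eta$ violates $(F_\theta')^* \leq F_\theta^* + \delta/3$. A secondary subtlety is ensuring that the minimizer of the augmented $F_\theta'$ itself lies in (a slight enlargement of) $B(0,R)$, so that the pointwise comparison propagates to the optimum correctly; this follows because outside a ball of radius $O(R + \log(1/\eta)/\varphi_0)$ the augmenting terms grow at rate $\varphi_0$ and dominate the original posynomial, forcing the minimum back inside.
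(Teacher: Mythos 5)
Your proposal is correct in substance but takes a genuinely different route from the paper. The paper's \cref{algo:ipm general} does \emph{not} augment the exponent set; instead, it adds the explicit constraint $\norm{x}_2 \leq R$ to the epigraph domain (giving $D_{\theta,R}$ with barrier $\Psi_{\theta,R}$, complexity parameter $2k+3$), chooses $R = \frac n{\varphi_0}\log(4\beta/\delta)$ via \cref{thm:facet gap radius bound}, and then applies the Renegar IPM framework (\cref{thm:renegar algorithm}) directly, bounding the symmetry of the starting point with \cref{prop:general symmetry bound}. Your approach instead reduces to \cref{thm:algo wc} by adding $2d$ cheap auxiliary monomials at $\theta\pm\varphi_0 e_i$ with a carefully chosen tiny coefficient $\eta$, so that the augmented instance is well-conditioned and the added terms neither perturb the optimum by more than $\Theta(\delta)$ nor inflate $\log\beta'$ beyond $O(n\log(\beta/\delta))$. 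Since the $\Theta(n)$ blowup of $\log\beta'$ and the $\sqrt d$ factor both sit inside the outer $\log$, the iteration complexity comes out the same. The paper's route is slightly tighter and more direct (one extra barrier term, no new variables or coefficients); yours is conceptually appealing as a black-box reduction from the ill-conditioned to the well-conditioned case, and it makes transparent where the $n/\varphi_0$ factor originates.

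Two minor quantitative slips in your write-up, neither fatal. First, the cross-polytope $\conv\{\theta\pm\varphi_0 e_i\}$ has inscribed-ball radius $\varphi_0/\sqrt d$ about $\theta$, not $\varphi_0$, so in fact $r'_\theta \geq \varphi_0/\sqrt d$; the extra $\sqrt d \leq \sqrt n$ is absorbed into the logarithm and does not affect the claimed bound. Second, the conclusion $(F'_\theta)^* \leq F^*_\theta + \delta/3$ should read $(F'_\theta)^* \leq F^*_\theta + 2\delta/3$ (you pay $\delta/3$ twice: once for the near-optimality of $x^*\in B(0,R)$ and once for the perturbation $F'_\theta - F_\theta$ on $B(0,R)$). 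Combined with $F_\theta \leq F'_\theta$ pointwise this still yields a $\delta$-approximate minimizer of $F_\theta$ from a $(\delta/3)$-approximate minimizer of $F'_\theta$. Also, the "secondary subtlety" you flag is not actually needed: since $F_\theta \leq F'_\theta$ holds globally and $(F'_\theta)^*$ is bounded via a witness inside $B(0,R)$, the implication goes through regardless of where the minimizer of $F'_\theta$ sits.
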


\Cref{thm:algo general} applies to arbitrary points $\theta$ in the Newton polytope and achieves an iteration complexity that is fully independent of $\theta$.
In contrast, \cref{thm:algo wc} applies only to well-conditioned instances and its complexity is sensitive to the distance of~$\theta$ to the boundary of the Newton polytope.
However, the former algorithm relies crucially on an a priori lower bound on the facet gap of $\Omega$, while the latter has no such requirement.
As such, our two algorithmic results are incomparable.

As in the well-conditioned case, our algorithm also allows one to solve the scaling problem with a similar iteration complexity bound.

\begin{cor}\label{cor:algorithm for uniform scaling}
  There exists an algorithm that, given as input an instance of the scaling problem (\cref{prob:scaling}) with shift $\theta\in\conv\Omega$ as well as a lower bound $0 < \varphi_0 \leq \varphi$ on the facet gap, returns~$\vec{x}_{\epsilon} \in \R^n$ such that
  \begin{align*}
    \norm{\grad F_\theta(\vec{x}_\epsilon)}_2 = \norm{\grad F(\vec{x}_\epsilon) - \theta}_2 \leq \epsilon
  \end{align*}
  within
  \begin{align*}
    O\left( \sqrt{k} \log\left( k n \frac N {\varphi_0} \frac{R_\theta}{\epsilon} \log\left( \frac{k\beta R_\theta}{\epsilon} \right) \right) \right)
  \end{align*}
  iterations.
\end{cor}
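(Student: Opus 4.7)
The plan is to reduce the shifted scaling problem to the shifted unconstrained GP of \Cref{prob:approximate GP} via a global smoothness bound on $F_\theta$, and then invoke \Cref{thm:algo general} with a suitably small target precision. This parallels the derivation of \Cref{cor:algorithm for scaling} from \Cref{thm:algo wc}.

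The first step is a smoothness estimate. A short computation identifies $\grad^2 F_\theta(x)$ with the covariance matrix of $\omega_i - \theta$ under the tilted probability distribution $p_i(x) \propto q_i \, e^{\ip{\omega_i - \theta}{x}}$. Since $\norm{\omega_i - \theta}_2 \leq R_\theta$ for every $i \in [k]$, the operator norm of this covariance is at most $R_\theta^2$, so $F_\theta$ is globally $R_\theta^2$-smooth.

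The second step turns near-optimality in value into a gradient bound. Fix $x \in \R^n$; if $\grad F_\theta(x) = 0$ there is nothing to show, so set $d = \grad F_\theta(x)/\norm{\grad F_\theta(x)}_2$. The quadratic upper bound from smoothness yields $F_\theta(x - td) \leq F_\theta(x) - t\,\norm{\grad F_\theta(x)}_2 + \tfrac12 R_\theta^2 t^2$ for every $t \geq 0$. Since the right-hand side must be at least $F_\theta^*$ (the infimum, regardless of whether it is attained), optimizing over $t$ gives the standard inequality $\norm{\grad F_\theta(x)}_2 \leq R_\theta \sqrt{2\bigl(F_\theta(x) - F_\theta^*\bigr)}$.

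The last step is to run the interior-point method of \Cref{thm:algo general} with target precision $\delta = \epsilon^2 / (2 R_\theta^2)$. The returned $x_\delta$ satisfies $F_\theta(x_\delta) - F_\theta^* \leq \delta$, hence $\norm{\grad F_\theta(x_\delta)}_2 \leq R_\theta \sqrt{2\delta} = \epsilon$ by the previous step. Substituting this $\delta$ into the iteration bound of \Cref{thm:algo general} produces $O\bigl(\sqrt k \log(kn \tfrac{N}{\varphi_0} \tfrac{R_\theta^2}{\epsilon^2} \log(\tfrac{2k\beta R_\theta^2}{\epsilon^2}))\bigr)$, which coincides with the claimed $O\bigl(\sqrt k \log(kn \tfrac{N}{\varphi_0} \tfrac{R_\theta}{\epsilon} \log(\tfrac{k\beta R_\theta}{\epsilon}))\bigr)$ after absorbing the factor of two from $\log(R_\theta^2/\epsilon^2) = 2\log(R_\theta/\epsilon)$ into the outer constant. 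I do not anticipate a real obstacle here; the one mild subtlety is that $F_\theta$ need not attain its infimum, which the line-search argument circumvents by working directly with $F_\theta^*$ as an infimum rather than a minimizer.
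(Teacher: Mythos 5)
Your proof is correct and is essentially the paper's own derivation. The paper obtains this corollary by combining \Cref{thm:algo general} with \Cref{prop:momentmapaccuracy}, which says exactly that it suffices to solve the GP to accuracy $\delta = \epsilon^2/(2R_\theta^2)$; the proof of that proposition uses the same $R_\theta^2$-smoothness bound from \Cref{lem:logobjectivesmooth} (Hessian as a covariance matrix) together with a single gradient step of length $1/L$ in the direction $-\grad F_\theta(x)$, which is just the optimizer of your line search and yields the identical inequality $\tfrac12\norm{\grad F_\theta(x)}_2^2/R_\theta^2 \leq F_\theta(x)-F_\theta^*$.
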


The algorithm in \cref{cor:algorithm for uniform scaling} can also solve the weak membership problem for the Newton polytope or any other convex polytope given in V-representation~\cite{gls}.
Suppose one is given $\Omega \subseteq \Q^n$, $\theta \in \Q^n$, and $\epsilon > 0$, \emph{without} assuming that $\theta\in\conv\Omega$.
The weak membership problem asks to assert either that $d(\theta, \conv \Omega) \leq \epsilon$ or that the ball~$B(\theta, \epsilon)$ is not contained in $\conv \Omega$ (these conditions are not mutually exclusive).
One can run the algorithm from \cref{cor:algorithm for uniform scaling} with~$q = (1, \dotsc, 1) \in \R^k$, precision $\epsilon$, and the lower bound on the facet gap given below in~\eqref{eq:intro facet gap bit bound}.
If the algorithm does not terminate within the stated (polynomial) number of iterations, or if the returned point $x_\epsilon \in \R^n$ does not satisfy $\norm{\grad F(x_\epsilon) - \theta}_2 \leq \epsilon$, one may conclude that $\theta \not\in \conv \Omega$, hence $B(\theta, \epsilon)$ is not contained in $\conv \Omega$ either.
Otherwise, we obtain a point~$x_\epsilon \in \R^n$ such that $\norm{\grad F(x_\epsilon) - \theta}_2 \leq \epsilon$; since $\grad F(x_\epsilon) \in \conv \Omega$, one can therefore safely assert that $d(\theta, \conv \Omega) \leq \epsilon$.

\subsubsection{Bounds on condition measures}
Up to now, instances of the GP and scaling problems were allowed to be given by arbitrary real vectors.
We now discuss how our condition measures (and thereby the iteration complexity) can for \emph{rational} instances be effectively bounded in terms of the encoding length.
We will focus our attention on $r_\theta$ and $\varphi$ since the other condition measures~$\beta$, $R_\theta$, and $N$ can be straightforwardly bounded from their definition (see \cref{eq:beta,eq:R_theta,eq:D}).
Throughout, we follow the conventions of~\cite{gls} for the encoding length:
we encode rational numbers (and rational vectors) in binary, and write $\enclen{\cdot}$ for the encoding length.

We first consider the distance of $\theta$ to the boundary of the Newton polytope (\cref{def:condition numbers}).
In \cref{subsec:general bounds}, we prove the following upper bound on the distance to the boundary for well-conditioned rational instances:
\begin{align}\label{eq:upper bound gcn by bit complexity}
  \log_2 r_\theta^{-1} \leq 6n^2 \max_{i \in [k]} \, \enclen{\omega_i} + \enclen{\theta}. 
\end{align}
This bound implies that the iteration complexity of the interior-point method in \cref{thm:algo wc} is bounded by a polynomial in the encoding length of the instance.
In the (not necessarily commutative) setting of~\cite{bfgoww-noncommutative-optimization}, where $\Omega \subseteq \Z^n$ is integral, a first-order method was found which solves the scaling problem in $\poly(1/\epsilon, R_\theta)$ iterations.
They also developed a second-order method for the unconstrained GP problem based on the recently introduced notion of robustness whose iteration complexity is $\poly(\log(1/\delta), R_\theta/r_\theta, n)$ (cf.~\cite{allen2017much,cmtv-matrix-scaling,celis2019fair}).
Our results therefore improve upon both, as we have a logarithmic dependence on $1/\epsilon$ (for the scaling problem) and $1/\delta$ (for the geometric program), and logarithmic dependence on $R_\theta/r_\theta$.

We provide a bound similar to~\cref{eq:upper bound gcn by bit complexity} on the facet gap. For rational instances,
\begin{align}\label{eq:intro facet gap bit bound}
  \log_2 \varphi^{-1} \leq (6n^2 + 1) \max_{i \in [k]} \, \enclen{\omega_i}. 
\end{align}
This bound implies that the iteration complexity of the interior-point method in \cref{thm:algo general} is bounded by a polynomial in the encoding length of the instance (without any dependence on $\theta$).


The preceding bounds allow us to compare the performance of \cref{thm:algo wc} and \cref{thm:algo general} on well-conditioned instances.
We find that the second algorithm typically has lower iteration complexity, since it is independent of the encoding length of~$\theta$ while the dependence on the other parameters is comparable.
We note that the first algorithm has the advantage that by running the main stage for a sufficient number of iterations we can approximate the geometric program to arbitrary precision (see discussion below \cref{thm:algo wc}).
This is not the case for the second algorithm, which depends nontrivially on the desired accuracy.

\medskip

The general bounds on the condition measures in terms of the encoding length can be improved under a combinatorial hypothesis.
Let us say that an instance is \emph{totally unimodular} if the exponents $\omega_i$ are all integral and the matrix $A$ whose columns are given by the $\omega_i$ is totally unimodular, i.e., every subdeterminant is $\pm 1$ or $0$.
Then the facet gap can be \emph{polynomially} bounded as (see~\cref{thm:TUconditionbounds})
\begin{align}\label{eq:tu bound on facet gap}
  \varphi^{-1} \leq n^{3/2}.
\end{align}
We can similarly bound the distance to the boundary for well-conditioned instances as $r_\theta^{-1} \leq 2^{\enclen{\theta}} n^{3/2}$.
The latter bound appears in \cite{bfgoww-noncommutative-optimization} in the special case where $\theta = \vec{0}$; there, the quantity $r_{\theta}$ is known as the weight margin.
The total unimodularity also implies a polynomial bound on $R_\theta$ and $N$.
As such, for totally unimodular instances, the interior point algorithm in \cref{thm:algo general} can solve the unconstrained GP problem in $\tilde O(\sqrt k \log( \frac1\delta ) )$ iterations and the scaling problem in $\tilde O(\sqrt k \log( \frac1{\epsilon}))$ iterations.
The $\tilde{O}$ notation hides a poly(input length) term inside the logarithm.


As an important source of totally unimodular instances, suppose that $G$ is a directed graph with vertex set $V=[n]$, edge set~$E$ of size~$k$, and edge weights~$q_{ij}>0$ for $ij \in E$.
Since the incidence matrix of a directed graph is totally unimodular~\cite[\S{}19.3, Example 2]{schrijver1998theory}, the associated geometric program
\begin{align}\label{eq:quiver f}
  F_{G,\theta}(x) = \log \sum_{ij \in E} q_{ij} e^{x_i - x_j} - \ip \theta x
\end{align}
is totally unimodular.

Many widely studied applications fall into this setting.
For example, recall the matrix scaling problem from \cref{subsubsec:matrix scaling}.
If $G$ is a complete bipartite directed graph, then \cref{eq:quiver f} recovers the matrix scaling objective function in \cref{eq:matrix scaling posy} up to a trivial change of variables $y\to -y$.
Thus our interior point algorithm requires $\tilde O(\sqrt k \log( \frac1{\epsilon}))$ iterations for finding an $\epsilon$-approximate $(\vec r, \vec c)$-scaling of a non-negative matrix with $k$ non-zero entries (if such a scaling exists).
The matrix balancing problem can similarly be modeled by taking~$G$ to be a complete directed graph.
Unconstrained GPs of the form \cref{eq:quiver f} can in general be related to nonlinear flow problems on directed graphs~\cite{cmtv-matrix-scaling}.

The iteration complexity that we obtain for matrix scaling and balancing slightly improves over (but is essentially the same as) the one given in~\cite{cmtv-matrix-scaling} for an interior-point method designed specifically for these problems.
It is natural to ask whether we can also meet the time complexity of the latter, which relied on a slightly different objective function and a clever implementation of approximate Newton iterations by using Laplacian solvers.
We leave this question for future investigation.

\subsection{Organization of the paper}
The rest of this paper is organized as follows.
In \cref{section:condition and diameter} we discuss the condition numbers defined in~\cref{def:condition numbers,def:intro facet gap} in detail and show how they imply diameter bounds on (approximate) minimizers of the GP.
In \cref{section:ipm and complexity} we explain how to use these diameter bounds together with the general framework of interior-point methods to prove \cref{thm:algo wc,thm:algo general} and their corollaries.
In \cref{section:total unimodularity}, we give a priori bounds on the condition numbers in terms of the encoding length of the input and we also provide better condition number bounds when the geometric program is totally unimodular.

\subsection{Acknowledgments}
We would like to thank Daniel Dadush for inspiring discussions, and Sander Gribling for helpful feedback.
MW acknowledges support through an NWO Veni grant no.~680-47-459.
PB was partially funded by the European Research Council (ERC) under the European's Horizon 2020 research and innovation programme (grant agreement No 787840).

\section{Condition measures and diameter bounds}\label{section:condition and diameter}
In this section, we discuss the condition measures defined in \cref{def:condition numbers,def:intro facet gap} in more detail and show how they imply diameter bounds on the geometric program.
Throughout, we fix an instance of the unconstrained GP or scaling problem with $\Omega = \{\omega_1, \dotsc, \omega_k\} \subseteq \R^n$, $\vec{q} \in \R_{++}^k$, and shift~$\theta \in \conv \Omega$.
Recall from \cref{eq:log gp shifted} that the objective function $F_\theta\colon \R^n \to \R$ is given by
\begin{align*}
  F_\theta(\vec{x})
= \log \sum_{i=1}^k q_ie^{\ip{\omega_i - \theta}{\vec{x}}}.
\end{align*}
We denote its infimum by $F_\theta^* = \inf_{\vec{x} \in \R^n} F_\theta(\vec{x})$.
The promise that $\theta \in \conv \Omega$ guarantees that $F_\theta^*$ is finite.
We further note that
\begin{align}\label{eq:f stab}
  F_\theta(x) = F_\theta(x+h) \qquad \forall h \in W^\perp,
\end{align}
where $W$ denotes the linear span of the $\omega_i - \theta$ in $\R^n$.
Since $\theta\in\conv\Omega$, this is the same as the direction vector space of $\affspan\Omega$ and hence independent of $\theta$.
\Cref{eq:f stab} implies that we can restrict the optimization problem to $W$.

\subsection{Well-conditioned instances}
We first consider the case that the instance is well-conditioned.
Recall that this means that $\theta$ is in the relative interior of the Newton polytope, so $r_{\theta} > 0$, where
\begin{align*}
  r_\theta = d(\theta, \partial\conv\Omega) = \max \{ r \geq 0 : B(\theta, r) \cap \affspan\Omega \subseteq \conv\Omega \}
\end{align*}
denotes the distance from $\theta$ to the (relative) boundary of the Newton polytope, as in \cref{def:condition numbers}.
We have the following useful dual expression.

\begin{lem}\label{lem:r_theta dual}
Let $\theta \in \relint\conv\Omega$.
Then,
\[
  r_\theta
= \min_{\substack{\vec{x} \in W \\ \norm x_2=1}} \max_{u \in \conv\Omega} \, \ip{u - \theta}{\vec{x}}
= \min_{\vec{x} \in W \setminus \{0\}} \max_{i \in [k]} \frac {\ip{\omega_i - \theta}{\vec{x}}} {\norm x_2}.
\]
\end{lem}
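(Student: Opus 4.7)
The plan is to derive the dual expression by combining the supporting hyperplane characterization of membership in a polytope with the observation that $r_\theta$ measures the largest ball in $\affspan\Omega$ around $\theta$ that fits inside $\conv\Omega$. The fact that we must restrict directions to lie in $W$ is the one subtle point, and it is forced on us precisely because the ball defining $r_\theta$ is taken inside $\affspan\Omega$, not inside all of $\R^n$.

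First I would translate the definition of $r_\theta$ into a pointwise condition: $r \leq r_\theta$ if and only if $\theta + r\hat{x} \in \conv\Omega$ for every $\hat{x} \in W$ with $\norm{\hat{x}}_2 \leq 1$, using that the direction space of $\affspan\Omega$ equals $W$. Next, since $\conv\Omega$ is a closed convex set, the separating hyperplane theorem gives, for any $y \in \R^n$,
\[
  y \in \conv\Omega \iff \ip{y}{x} \leq \max_{u \in \conv\Omega} \ip{u}{x} \quad \forall x \in \R^n.
\]
When $y \in \affspan\Omega$, both $y - \theta$ and $u - \theta$ lie in $W$, so the inner products are unaffected by replacing $x$ with its orthogonal projection onto $W$; thus the test can be restricted to $x \in W$. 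Combining this with the translated condition, $r \leq r_\theta$ is equivalent to
\[
  r \ip{\hat{x}}{x} \leq \max_{u \in \conv\Omega} \ip{u - \theta}{x} \qquad \forall\, \hat{x}, x \in W \text{ with } \norm{\hat{x}}_2 \leq 1.
\]

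Taking the supremum over $\hat{x}$ on the left (which equals $r\norm{x}_2$, attained at $\hat{x} = x/\norm{x}_2$), this reduces to $r\norm{x}_2 \leq \max_u \ip{u-\theta}{x}$ for all $x \in W$, equivalently $r \leq \max_u \ip{u-\theta}{x}$ for all unit $x \in W$. Hence $r_\theta$ is exactly the infimum of $\max_{u \in \conv\Omega} \ip{u-\theta}{x}$ over unit vectors $x \in W$; this infimum is attained since the objective is continuous and the unit sphere of $W$ is compact. Finally, a linear functional over a convex hull attains its maximum at a vertex, so $\max_{u \in \conv\Omega}\ip{u-\theta}{x} = \max_{i \in [k]}\ip{\omega_i - \theta}{x}$, which yields the first equality. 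The second equality is then immediate by homogeneity: the ratio $\max_i \ip{\omega_i-\theta}{x}/\norm{x}_2$ is invariant under positive scaling of $x$, so minimizing over nonzero $x \in W$ is the same as minimizing over unit $x \in W$.

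The only genuinely delicate step is the reduction to directions in $W$; once one recognizes that inner products with vectors in $W$ are unchanged by projecting $x$ onto $W$, the rest is a routine rearrangement of the min-max.
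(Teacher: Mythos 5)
Your proof is correct and rests on essentially the same duality idea as the paper's brief argument, which simply identifies $\max_{u \in \conv\Omega} \ip{u-\theta}{x}$ (for unit $x \in W$) with the distance from $\theta$ to the supporting hyperplane of $\conv\Omega$ in direction $x$ and then minimizes over directions. You make that sketch rigorous by invoking the support-function characterization of membership in a closed convex set and by an explicit projection argument to justify restricting the test directions to $W$, which is the one point the paper leaves implicit.
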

\begin{proof}
The second equality is clear.
To see the first equality, note that for any vector $x\in W$ with $\norm{x}_2=1$, $\max_{u \in \conv\Omega} \, \ip{u - \theta}{\vec{x}}$ is the distance from $\theta$ to the face of $\conv \Omega$ determined by the vector $x$.
Now minimizing over all such~$x$ results in the shortest distance from $\theta$ to any face of the polytope $\conv \Omega$, that is, in~$r_\theta$.
\end{proof}

Alternatively, \cref{lem:r_theta dual} can be proved by using polar duality.
We now state our diameter bound for well-conditioned instances.
In addition to the distance~$r_\theta$, we also need the quantity
\begin{align*}
  \beta = \frac{\norm{\vec q}_1}{\min_{i\in [k]} q_i},
\end{align*}
from \cref{eq:beta} which captures the condition of the coefficients~$q$ in the GP.

\begin{lem}\label{lem:wc large x}
  Let $\theta\in\relint\conv\Omega$.
  For all $x\in W$ with norm $\norm x_2 > \frac{\log(c\beta)}{r_\theta}$, where $c>0$, there exists $i_0 \in [k]$ such that
  \begin{align*}
    q_{i_0} e^{\ip{\omega_{i_0} - \theta} x} > c \norm q_1.
  \end{align*}
  In particular, $F_\theta(x) > F_\theta(0) + \log c$.
\end{lem}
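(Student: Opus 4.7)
The plan is to exploit the dual formula for $r_\theta$ established in \cref{lem:r_theta dual}, which directly says that the exponents $\omega_i - \theta$ cannot all have small inner product with any nonzero $x \in W$. Concretely, for the given $x \in W$, I would first apply \cref{lem:r_theta dual} to obtain an index $i_0 \in [k]$ with
\begin{align*}
  \ip{\omega_{i_0} - \theta}{x} \geq r_\theta \norm{x}_2 > \log(c\beta),
\end{align*}
where the strict inequality uses the hypothesis $\norm{x}_2 > \log(c\beta)/r_\theta$.

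Next I would exponentiate and multiply by $q_{i_0}$. Using the definition $\beta = \norm{\vec q}_1 / \min_j q_j$, together with $q_{i_0} \geq \min_j q_j$, this gives
\begin{align*}
  q_{i_0} e^{\ip{\omega_{i_0} - \theta}{x}} > q_{i_0} \cdot c \beta = q_{i_0} \cdot \frac{c \norm{\vec q}_1}{\min_j q_j} \geq c \norm{\vec q}_1,
\end{align*}
which is the first claimed inequality.

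For the \emph{in particular} clause, I would bound the full sum in $F_\theta(x)$ from below by just the $i_0$ term and then apply the previous step:
\begin{align*}
  F_\theta(x) = \log \sum_{i=1}^k q_i e^{\ip{\omega_i - \theta}{x}} \geq \log\bigl( q_{i_0} e^{\ip{\omega_{i_0} - \theta}{x}} \bigr) > \log\bigl( c \norm{\vec q}_1 \bigr) = \log c + F_\theta(0),
\end{align*}
using $F_\theta(0) = \log \norm{\vec q}_1$. There is no real obstacle here — the only subtlety worth flagging is that \cref{lem:r_theta dual} applies to $x \in W \setminus \{0\}$, and the case $x = 0$ is vacuous since $\log(c\beta)/r_\theta \geq 0$ whenever $c\beta \geq 1$; for $c\beta < 1$ the hypothesis holds trivially at $x=0$ but the conclusion $q_{i_0} > c\norm{\vec q}_1$ still follows from $q_{i_0} \geq \min_j q_j > c \norm{\vec q}_1$, so no separate case analysis is needed.
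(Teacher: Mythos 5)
Your proof is correct and follows essentially the same route as the paper's: invoke the dual formula from \cref{lem:r_theta dual} to produce an index $i_0$ with $\ip{\omega_{i_0}-\theta}{x} > \log(c\beta)$, exponentiate, and use $q_{i_0}\beta \geq \norm{q}_1$. The extra remark about $x=0$ (when $c\beta < 1$) is harmless but unnecessary for the intended applications, where $c\geq 1$ and $\beta \geq k$ so $\log(c\beta) \geq 0$ always forces $x \neq 0$.
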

\begin{proof}
  Let $\vec{x} \in W$ be such that $\norm x_2 > \frac{\log(c\beta)}{r_\theta}$.
  By \cref{lem:r_theta dual},
  \begin{align*}
    r_\theta \leq \max_{i\in[k]} \frac{\ip{\omega_i - \theta} x}{\norm x_2},
  \end{align*}
  and hence there exists $i_0 \in [k]$ such that
  \begin{align*}
    \ip{\omega_{i_0} - \theta} x \geq r_\theta \norm x_2 > \log(c\beta).
  \end{align*}
  This establishes the first claim, since now
  \begin{align*}
    q_{i_0} e^{\ip{\omega_{i_0} - \theta} x} > q_{i_0} e^{\log(c\beta)} = c q_{i_0} \beta \geq c \norm q_1.
  \end{align*}
  The second claim follows from this, since $e^{F_\theta(x)} \geq q_{i_0} e^{\ip{\omega_{i_0} - \theta} x}$ and $e^{F_\theta(0)} = \norm q_1$.
\end{proof}

\begin{cor}[Well-conditioned diameter bound]\label{cor:WCdiameterbound}
  Let $\theta\in\relint\conv\Omega$.
  Then there exists $x\in W$ with $\norm x_2 \leq \frac{\log\beta}{r_\theta}$ such that
  \begin{align*}
    F_\theta(\vec{x}) = F_\theta^*.
  \end{align*}
\end{cor}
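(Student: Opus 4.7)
The plan is to combine the sublevel-set estimate from \Cref{lem:wc large x} with a compactness argument and the $W^\perp$-invariance from \Cref{eq:f stab}. Concretely, I would apply \Cref{lem:wc large x} with $c = 1$, which gives the implication
\begin{align*}
  x \in W, \ \norm{x}_2 > \frac{\log \beta}{r_\theta} \ \Longrightarrow \ F_\theta(x) > F_\theta(0).
\end{align*}
So every sublevel set $\{x \in W : F_\theta(x) \leq F_\theta(0)\}$ is contained in the closed ball $\overline{B}(0, \tfrac{\log\beta}{r_\theta}) \cap W$, which is compact.

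Next, since $F_\theta$ is continuous and this compact set is nonempty (it contains $0$) and contains every point in $W$ at which $F_\theta$ could be at most $F_\theta(0)$, the restriction $F_\theta|_W$ attains its infimum at some point $x \in W$ with $\norm{x}_2 \leq \tfrac{\log\beta}{r_\theta}$. By \Cref{eq:f stab}, $F_\theta$ is invariant under translations in $W^\perp$, so for every $y \in \R^n$ we can write $y = x_W + x_\perp$ with $x_W \in W$, $x_\perp \in W^\perp$, and then $F_\theta(y) = F_\theta(x_W) \geq \inf_{W} F_\theta$. Hence $\inf_{\R^n} F_\theta = \inf_W F_\theta$, and this common value equals $F_\theta(x)$.

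There is essentially no hard step here: the work was already done in \Cref{lem:wc large x}, and the role of \Cref{eq:f stab} is only to reduce the problem to the subspace $W$ where the estimate lives. The one point to be careful about is not to invoke property (b) from the introduction on faith; it is cleaner to deduce attainment directly from the compactness of the sublevel set $\{F_\theta \leq F_\theta(0)\} \cap W$, since that is exactly what \Cref{lem:wc large x} with $c=1$ delivers.
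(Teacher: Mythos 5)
Your proposal is correct and matches the paper's (implicit) argument: the corollary is stated as an immediate consequence of \cref{lem:wc large x}, and your proof simply supplies the routine compactness-of-sublevel-set step (using $c=1$) together with the $W^\perp$-invariance from \cref{eq:f stab}, which is exactly what the paper leaves to the reader.
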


In the case where $\Omega \subseteq \{0,1\}^n$ and $q=(1,\dots,1)$ is the all-ones vector, the diameter bound $\norm{\vec x}_2 \leq \frac n {r_\theta}$ was obtained in~\cite{Singh-Vishnoi-14}.
Ref.~\cite{straszak-vishnoi-bitcomplexity} improves this bound to $\norm{\vec x}_2 \leq \frac{\log k} {r_\theta}$ for general~$\Omega$ but the same~$q$ (a special case of \cref{cor:WCdiameterbound}).

\subsection{General instances}
In this subsection we allow $\theta$ to be an arbitrary point in the Newton polytope.
Here, the central quantity is the \emph{facet gap} of $\Omega$, which we recall was defined as the largest constant $\varphi>0$ such that
\begin{align*}
    d(\omega, \affspan F) \geq \varphi
\end{align*}
for any facet $F \subseteq \conv \Omega$ and $\omega \in \Omega \setminus F$ (see \cref{def:intro facet gap}).

The following theorem improves upon \cite[Thm.~4.1]{straszak-vishnoi-bitcomplexity}, as we will discuss below \cref{prop:facet gap ufc}.
Its proof follows essentially the same argument, with a slight modification that also avoids the recursion and leads to a slightly better bound.

\begin{thm}[Diameter bound via facet gap]\label{thm:facet gap radius bound}
  For any $0<\delta<2\beta$ and $\theta\in\conv\Omega$, there exists $x\in W$ such that
  \begin{align*}
    \norm{x}_2 \leq \frac{m}\varphi \log\left( \frac{2 \beta}\delta \right)
  \end{align*}
  and
  \begin{align*}
    F_\theta(\vec{x}) \leq F_\theta^* + \delta,
  \end{align*}
  where $m = \dim\affspan\Omega \leq n$.
\end{thm}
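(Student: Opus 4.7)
I will prove the bound by induction on $m = \dim\affspan\Omega$, mirroring the strategy of Straszak--Vishnoi but in the geometric (non-integral) setting and organized so as to avoid a full recursion through faces of arbitrary codimension. The base case $m = 0$ is trivial: $\Omega = \{\theta\}$, the objective $F_\theta$ is constant, and $x = 0$ satisfies $\norm{x}_2 = 0$. For the inductive step, let $F$ denote the unique face of $\conv\Omega$ that contains $\theta$ in its relative interior, with $m' = \dim F$, and let $S = \{i : \omega_i \in F\}$. If $\theta$ happens to lie in $\relint\conv\Omega$, the well-conditioned diameter bound (\cref{cor:WCdiameterbound}) already produces a minimizer; otherwise $F$ is a proper face and the reduction below applies.

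The key construction is a direction $y \in W$ that separates the exponents in $F$ from the rest. Writing $F$ as an intersection of facets of $\conv\Omega$, I take $y$ to be the sum of the (unit) outward normals (within $\affspan\Omega$) of those facets. Then $\ip{\omega_i - \theta}{y} = 0$ for every $i \in S$ and $\ip{\omega_i - \theta}{y} \leq -\varphi$ for every $i \notin S$, directly from \cref{def:intro facet gap}. Now apply the inductive hypothesis to the restricted instance $(\Omega \cap F, q|_S, \theta)$ with precision $\delta/2$; this is legitimate because a facet of $F$ has the form $F \cap G$ for some facet $G$ of $\conv\Omega$ not containing that facet, so the facet gap of the restricted instance is still at least $\varphi$. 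This yields $x_F \in W_F := \linspan(\omega_i - \theta : i \in S)$ with $\log A(x_F) \leq F_\theta^{*,S} + \delta/2$, where $A(x) = \sum_{i \in S} q_i e^{\ip{\omega_i - \theta}{x}}$. I then set $x = x_F + t\, y$ for a scalar $t \geq 0$ to be chosen, and use the identity $F_\theta^{*,S} = F_\theta^*$ (valid because pushing in the direction $y$ drives the outside terms to zero without affecting the inside sum) together with $\log(A + B) \leq \log A + B/A$ to obtain
\begin{align*}
  F_\theta(x) \;\leq\; \log A(x_F) + e^{-t\varphi}\, \tfrac{B(x_F)}{A(x_F)} \;\leq\; F_\theta^* + \tfrac{\delta}{2} + e^{-t\varphi}\, \tfrac{B(x_F)}{A(x_F)},
\end{align*}
where $B(x) = \sum_{i \notin S} q_i e^{\ip{\omega_i - \theta}{x}}$. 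Picking $t$ just large enough to make the last term at most $\delta/2$ yields $F_\theta(x) \leq F_\theta^* + \delta$.

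The hard part will be the norm accounting, i.e.\ showing that $\norm{x_F}_2 + t\,\norm{y}_2 \leq m\log(2\beta/\delta)/\varphi$. The IH controls $\norm{x_F}_2$, but the required threshold $t \gtrsim \varphi^{-1}\log(B(x_F)/(\delta A(x_F)))$ naively picks up an additive term proportional to $N\,\norm{x_F}_2$, which would ruin the linear-in-$m$ dependence and introduce an unwanted factor of $N/\varphi$. Avoiding this is the crux of the ``slight modification'' advertised in the statement: one must choose $x_F$ so that the ratio $B(x_F)/A(x_F)$ remains bounded by something like $\beta$ rather than $\beta\,e^{N\norm{x_F}_2}$, for instance by taking $x_F$ inside a bounded sublevel set of the full objective rather than the unconstrained minimizer of the restricted objective, and by exploiting that $y$ is supported in $W_F^\perp$ so that movement along $y$ is ``free'' with respect to the restricted problem. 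Once this telescoping bookkeeping is set up, summing the contributions across the induction yields the stated bound $m\log(2\beta/\delta)/\varphi$; the assumption $\delta < 2\beta$ ensures the logarithm is positive.
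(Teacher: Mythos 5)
Your proposal takes a genuinely different route from the paper's proof. The paper applies a single, non-recursive construction: it takes a $\delta/2$-approximate minimizer $x^*$ of the full objective, writes $x^* = \sum_{j\in J'} c_j a_j$ as a nonnegative combination of at most $m$ unit facet normals by working in the normal cone of a vertex (Carath\'eodory), and then simply \emph{truncates} each coefficient at $\Delta = \tfrac1\varphi\log(2\beta/\delta)$; the facet-gap estimate shows truncation costs at most $\delta/2$. Your proposal instead revives the recursive structure of the original Straszak--Vishnoi argument (inducting on $\dim\affspan\Omega$ via the unique face of $\conv\Omega$ containing $\theta$ in its relative interior), which is precisely the recursion the paper advertises avoiding. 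Some pieces of your reduction are fine --- the separating direction $y$ with $\ip{\omega_i-\theta}{y}=0$ on $S$ and $\le -\varphi$ off $S$ is correct, and the facet gap of the restricted instance is indeed $\ge\varphi$ because a facet of $F$ equals $F\cap G$ for some facet $G$ of $\conv\Omega$ with $G\not\supseteq F$, and then $\affspan(F\cap G)=\affspan F\cap\affspan G\subseteq\affspan G$.

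However, there are two genuine gaps. First, your base case is wrong: when $\theta\in\relint\conv\Omega$ you invoke \cref{cor:WCdiameterbound}, which gives a minimizer of norm $\le\log(\beta)/r_\theta$, but $r_\theta$ and $\varphi$ are incomparable. If $\theta$ sits just inside the polytope near a facet, $r_\theta\to 0$ while $\varphi$ is fixed, so $\log(\beta)/r_\theta$ can vastly exceed the claimed $\tfrac{m}{\varphi}\log(2\beta/\delta)$. The whole point of the facet-gap bound is that an \emph{approximate} minimizer can be much closer than the exact one; by reaching for the exact minimizer in this case you lose that. Second --- and you essentially flag this yourself --- the norm accounting is not resolved. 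If $x_F$ is produced by the IH as a near-minimizer of the restricted objective $\log A$, then $B(x_F)/A(x_F)$ can be as large as $\beta e^{R_\theta\norm{x_F}_2}$, forcing $t\gtrsim\tfrac1\varphi\bigl(\log\tfrac\beta\delta+R_\theta\norm{x_F}_2\bigr)$; the cross term $R_\theta\norm{x_F}_2/\varphi$ scales like $\tfrac{R_\theta m'}{\varphi^2}\log(2\beta/\delta)$, which is not absorbed into the claimed $\tfrac{m}{\varphi}\log(2\beta/\delta)$. Your suggested fix --- taking $x_F$ in a bounded sublevel set of the \emph{full} objective --- is not obviously compatible with the inductive hypothesis: a near-minimizer of $\log A$ need not control $B$, and a near-minimizer of $F_\theta|_{W_F}$ does not come with the inductive norm bound, nor does it necessarily approximate $\inf\log A$. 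The paper's truncation sidesteps both problems at once: because $x$ is a pointwise decrease of $x^*$ in the normal-cone coordinates, one gets $\ip{\omega-\theta}{x}\le\ip{\omega-\theta}{x^*}$ for $\omega$ on the relevant face \emph{for free}, and the off-face terms are bounded directly via the facet gap, with no ratio $B/A$ to control. If you want to salvage the inductive route you would need a substantially more delicate choice of $x_F$ (and a base case that does not pass through $r_\theta$); as written the proof does not close.
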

\begin{proof}
  To start, choose vectors $\vec{a}_j \in W$ with $\norm{\vec{a}_j}_2 = 1$ and scalars $b_j \in \R$ for $j \in J$ some finite index set, such that the Newton polytope is defined by
  \begin{align*}
    \conv \Omega = \left\{ \vec{p} \in \affspan \Omega : \ip p {a_j} \leq b_j \quad \forall j \in J \right\}.
  \end{align*}
  We assume each inequality defines a facet of the polytope.
  Define the \emph{normal cone}~$N_\omega$ at a vertex $\omega$ to be $N_\omega = \{ \sum_{j \in J_\omega} c_j a_j : c_j \geq 0 \}$ where $J_\omega = \{ j \in J : \ip{a_j}{\omega} = b_j \}$ is the set of tight constraints at $\omega$.
  It is well-known that $W = \bigcup_\omega N_\omega$, where $\omega$ ranges over the vertices of $\conv \Omega$ (the normal fan is complete).

  Now fix $\theta \in \conv \Omega$ and let $\vec{x}^* \in W$ be such that
  \begin{align*}
    F_\theta(\vec{x}^*) \leq F_\theta^* + \frac{\delta}{2}.
  \end{align*}
  Then~$x^* \in N_{\omega'}$ for some vertex $\omega' \in \Omega$ of $\conv\Omega$, hence there exists a subset~$J' \subseteq J_{\omega'} \subseteq J$ and non-negative numbers~$\{c_j\}_{j \in J'}$ such that $x^* = \sum_{j \in J'} c_j a_j$. By Carath\'eodory's theorem, we may assume~$|J'| \leq m = \dim W$.
  Now define
  \begin{align*}
      \Delta := \frac1\varphi \log\left( \frac{2 \beta} \delta \right),
  \end{align*}
  which is positive by the assumption that $\delta<2\beta$, and set
  \begin{align*}
    \vec{x} := \sum_{j \in J'} \min(c_j, \Delta) \vec{a}_j.
  \end{align*}
  Since $\norm{\vec{a}_j}_2 = 1$, we have $\norm{\vec{x}}_2 \leq \abs{J'} \cdot \Delta \leq m \Delta$, so $x$ satisfies the desired norm bound.

  To complete the proof, it therefore suffices to show that
  \begin{align}\label{eq:goal}
    F_\theta(\vec{x}) \leq F_\theta(\vec{x}^*) + \frac{\delta}{2}.
  \end{align}
  We start by setting $c_j' = \min(c_j, \Delta)$ for convenience, so that $x = \sum_{j\in J'} c'_j a_j$.
  Let~$J_0$ consist of those $j \in J'$ such that $c_j' \neq c_j$, i.e., $c_j > \Delta$ and $c'_j = \Delta$.
  We may assume there exists at least one $j_0 \in J_0$; otherwise, $c_j \leq \Delta$ for every $j \in J'$, so we have $x = x^*$ and \cref{eq:goal} holds trivially.
  Now consider the intersection of $\Omega$ with the face defined by the constraints~$J'$,
  \begin{align*}
    \Omega' = \{ \omega \in \Omega : \ip{\vec{a}_j}{\omega} = b_j \quad \forall j \in J' \},
  \end{align*}
  If $\omega\in\Omega\setminus\Omega'$, then $\omega\not\in\affspan\Omega'$, and
  \begin{align*}
    \ip \omega x - \ip {\omega'} x
  &= \sum_{j \in J'} c'_j \ip {\omega-\omega'} {a_j}
  = \sum_{j \in J'} c'_j \left( \ip \omega {a_j} - b_j \right) \\
  &\leq c'_{j_0} \left( \ip \omega {a_{j_0}} - b_{j_0} \right)
  = \Delta \left( \ip \omega {a_{j_0}} - b_{j_0} \right) \\
  &\leq -\varphi\Delta
  = \log \left(\frac\delta{2\beta}\right).
  \end{align*}
  The first inequality holds since each term in the sum is non-positive.
  The second inequality follows from the observation that the distance from $\omega$ to the affine span of the facet defined by $a_{j_0}$ and $b_{j_0}$ is
  \begin{align*}
    \frac{b_{j_0} - \ip \omega {a_{j_0}}}{\norm{a_{j_0}}_2}
  = b_{j_0} - \ip \omega {a_{j_0}}
  \geq
  \varphi
  \end{align*}
  by definition of the facet gap.
  So we obtain for $\omega\in\Omega\setminus\Omega'$ that
  \begin{align}\label{eq:unprime bound}
    \beta e^{\ip {\omega-\omega'} x} \leq \frac\delta2.
  \end{align}
  On the other hand, if $\omega \in \Omega'$, then
  \begin{align}\label{eq:prime bound}
    \ip{\omega - \theta}{\vec{x}} & = \ip{\omega - \theta}{\vec{x}^*} - \sum_{j \in J_0} (c_j - c'_j) \ip{\omega - \theta}{\vec{a}_j} \leq \ip{\omega - \theta}{\vec{x}^*}
  \end{align}
  since $c_j \geq c_j'$ and $\ip{\theta}{\vec{a}_j} \leq b_j = \ip{\omega}{\vec{a}_j}$ for all $j\in J'$.
  Therefore, we now obtain
  \begin{align*}
    F_\theta(x) 
  &= \log \left( \sum_{i : \omega_i \in \Omega'} q_i e^{\ip{\omega_i - \theta} x} \right)
   + \log \left( 1 + \frac{\sum_{i : \omega_i \not\in \Omega'} q_i e^{\ip{\omega_i - \theta} x}}{\sum_{i : \omega_i \in \Omega'} q_i e^{\ip{\omega_i - \theta} x}} \right) \\
  &\leq \log \left( \sum_{i : \omega_i \in \Omega'} q_i e^{\ip{\omega_i - \theta} x} \right)
   + \frac{\sum_{i : \omega_i \not\in \Omega'} q_i e^{\ip{\omega_i - \theta} x}}{\sum_{i : \omega_i \in \Omega'} q_i e^{\ip{\omega_i - \theta} x}} \\
  &\leq \log \left( \sum_{i : \omega_i \in \Omega'} q_i e^{\ip{\omega_i - \theta} x} \right)
   + \frac{\sum_{i : \omega_i \not\in \Omega'} q_i e^{\ip{\omega_i - \theta} x}}{q_{i'} e^{\ip{\omega' - \theta} x}} \\
  &\leq \log \left( \sum_{i : \omega_i \in \Omega'} q_i e^{\ip{\omega_i - \theta} {x^*}} \right)
   + \frac\delta 2 \\
  &\leq F_\theta(x^*) + \frac\delta2.
  \end{align*}
  In the second inequality we denote by $i'\in[k]$ an index such that $\omega_{i'} = \omega'$, observing that $\omega' \in \Omega'$.
  The third inequality follows from \cref{eq:unprime bound,eq:prime bound}.
\end{proof}

In contrast with the diameter bound for well-conditioned instances, which is in terms of the distance of~$\theta$ to the boundary of the Newton polytope, the diameter bound in \cref{thm:facet gap radius bound} is \emph{independent} of the shift~$\theta$.
However, the facet gap is not an intrinsic property of the Newton polytope~$\conv\Omega$ but rather depends on the entire set of exponents~$\Omega$, so the same is true for the diameter bounds in terms of the facet gap.
The following example shows that this is necessary.

\begin{example}\label{example:facet gap tight}
  Consider for $\varphi\in(0,1/2]$ the instance with $\Omega = \{0,\varphi,1\} \subseteq \R$, $q=(1,1,1) \in \R^3$, and $\theta=0$.
  It is clear that $\conv\Omega = [0,1]$ and that $\Omega$ has facet gap equal to~$\varphi$.
  Furthermore, we have
  \begin{align*}
    F_\theta(x) = \log\left( 1 + e^{\varphi x} + e^x \right) \geq 0
  \end{align*}
  and $\lim_{x\to-\infty} F_\theta(x) = 0$, so $F^*_\theta = 0$.
  On the other hand,
  \begin{align*}
    F_\theta(x) \geq \log\left( 1 + e^{\varphi x} \right),
  \end{align*}
  so any $\delta$-approximate minimizer for $\delta\in(0,1)$ must satisfy $\abs x \geq -x \geq \frac1\varphi\log \frac1{2\delta}$.
\end{example}

We now explain how \cref{thm:facet gap radius bound} implies the diameter bound of \cite{straszak-vishnoi-bitcomplexity,celis2019fair}.
We first recall a definition from \cite{straszak-vishnoi-bitcomplexity}.

\begin{defn}[Unary facet complexity]\label{def:ufc}
  Let $P \subseteq \R^n$ be an integral polytope.
  The \textit{unary facet complexity} $\ufc(P)$ is the smallest integer $M \geq 0$ such that $P$ can be described as the intersection of the affine span of $P$ with half-spaces~$\ip p a \leq b$, where $a \in \Z^n$, $b \in \R$, and $\norm{a}_{\infty} \leq M$.
\end{defn}

The following proposition shows that the facet gap can be bounded in terms of the unary facet complexity.

\begin{prop}\label{prop:facet gap ufc}
  Let $\Omega \subseteq \Z^n$.
  Then the facet gap~$\varphi$ of $\Omega$ satisfies
  \begin{align*}
    \frac1\varphi \leq \sqrt{n} \cdot \ufc(\conv \Omega).
  \end{align*}
\end{prop}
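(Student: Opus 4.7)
The plan is to unpack the definition of $\ufc(\conv\Omega)$ and pair it with the integrality of $\Omega$ to directly bound the distance $d(\omega, \affspan F)$ from below. Set $M = \ufc(\conv\Omega)$ and fix an $H$-representation
\begin{align*}
  \conv\Omega = \affspan\Omega \cap \bigcap_{j \in J} \{p \in \R^n : \ip{p}{a_j} \leq b_j\}
\end{align*}
with $a_j \in \Z^n$, $b_j \in \R$, and $\norm{a_j}_\infty \leq M$, where, after possibly discarding redundant inequalities, each inequality is facet-defining. Any facet~$F$ of $\conv\Omega$ then arises as the intersection with the hyperplane $H_j = \{p \in \R^n : \ip{p}{a_j} = b_j\}$ for some $j \in J$, and in particular $\affspan F \subseteq H_j$.

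The first key observation is integrality of the offset. Since $\Omega$ is finite and consists of integer points, every vertex of $\conv\Omega$ lies in $\Omega \cap \Z^n$. Each facet contains a vertex, and on that vertex the $j$-th inequality is tight, so $b_j = \ip{v}{a_j} \in \Z$ for some $v \in \Omega$. Consequently, for any $\omega \in \Omega \setminus F$ we have $\ip{\omega}{a_j} \in \Z$ with $\ip{\omega}{a_j} < b_j$, hence $|\ip{\omega}{a_j} - b_j| \geq 1$.

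The second step is an elementary distance comparison. Because $\affspan F \subseteq H_j$, taking a superset can only decrease the distance, so
\begin{align*}
  d(\omega, \affspan F) \;\geq\; d(\omega, H_j) \;=\; \frac{|\ip{\omega}{a_j} - b_j|}{\norm{a_j}_2} \;\geq\; \frac{1}{\norm{a_j}_2} \;\geq\; \frac{1}{\sqrt{n}\,\norm{a_j}_\infty} \;\geq\; \frac{1}{\sqrt{n}\,M},
\end{align*}
using the standard inequality $\norm{\cdot}_2 \leq \sqrt{n}\,\norm{\cdot}_\infty$ on $\R^n$. Taking the infimum over all facets $F$ and all $\omega \in \Omega \setminus F$ yields $\varphi \geq 1/(\sqrt{n}\,M)$, which is the claimed bound. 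There is no real obstacle here; the only mild subtlety worth flagging (to justify the superset inequality) is that one should not try to work intrinsically inside $\affspan\Omega$, where the normal to $\affspan F$ would be the orthogonal projection of $a_j$ onto the direction space of $\affspan\Omega$ and bounding its norm would force an extra step. Comparing directly to the ambient hyperplane $H_j$ avoids this entirely.
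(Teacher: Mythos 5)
Your proof is correct and follows essentially the same approach as the paper's: pick an $H$-representation witnessing $\ufc(\conv\Omega)$, lower-bound $d(\omega, \affspan F)$ by $d(\omega, H_j)$ for the corresponding hyperplane, observe that integrality of $\Omega$ and $a_j$ forces the numerator $|\ip{\omega}{a_j} - b_j|$ to be a positive integer (hence at least $1$), and bound the denominator $\norm{a_j}_2 \leq \sqrt n\,\norm{a_j}_\infty$. The only cosmetic difference is that you show $b_j \in \Z$ by evaluating at a vertex, while the paper instead notes that $b_j - \ip{\omega}{a_j} = \ip{\omega'}{a_j} - \ip{\omega}{a_j}$ for any $\omega' \in \Omega \cap F$; both observations are equivalent.
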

\begin{proof}
  For any facet $F \subset \conv\Omega$ there exists a corresponding half-space $\ip \cdot a \leq b$ defined by $a\in\Z^n$, $b\in\R$, and $\norm a_\infty \leq \ufc(\conv\Omega)$.
  Then the affine span of the facet is given by $\affspan F = \affspan\Omega \cap H$, where $H$ is the affine hyperplane
  \begin{align*}
    H = \left\{ p \in \R^n : \ip a p = b \right\}.
  \end{align*}
  As a consequence, the distance from any $\omega \in \Omega \setminus F$ to $\affspan F$ can be lower bounded by the distance of $\omega$ to the affine hyperplane~$H$, that is,
  \begin{align*}
    d(\omega, \affspan F)
  \geq \frac{b - \ip a \omega}{\norm a_2}
  = \frac{\ip a {\omega'} - \ip a \omega}{\norm a_2}
  \geq \frac1{\sqrt n \cdot \ufc(\conv\Omega)},
  \end{align*}
  where $\omega'$ is an arbitrary point in~$\Omega \cap F$.
  To see the inequality, note that the numerator is positive and an integer since $a, \omega, \omega' \in \Z^n$, so at least $1$, whereas the denominator is at most $\sqrt n \cdot \ufc(\conv\Omega)$.
\end{proof}

Thus, \cref{thm:facet gap radius bound,prop:facet gap ufc} imply the following diameter bound:
For integral~$\Omega\subseteq\Z^n$, there exists a $\delta$-approximate minimizer of $F_\theta$ of norm
\begin{align*}
  \norm{x}_2
\leq n^{3/2} \ufc(\conv\Omega) \log\left( \frac{2 k \frac{\max q_i}{\min q_i}}\delta \right)
\leq n^{3/2} \ufc(\conv\Omega) \left( 2 L_p + \log\left( \frac{2 k}\delta \right) \right),
\end{align*}
where $L_p = \max_i \abs{\log q_i}$ and we used that $\beta \leq k \frac{\max q_i}{\min q_i} \leq k e^{2 L_p}$.
The right-hand side bound is essentially the original diameter bound from~\cite{straszak-vishnoi-bitcomplexity} with a logarithmically improved dependence on~$n$.
The middle bound is very similar to a bound stated in an older version of~\cite{celis2019fair}.

\section{Interior-point methods and complexity analysis}\label{section:ipm and complexity}
In this section, we show that approximate minimizers of $F_\theta$ may be found efficiently using interior-point methods (IPM).
The idea is to rewrite the geometric program as a \textit{linear} optimization over a (more complicated) convex domain, for which we know an explicit \textit{self-concordant barrier functional}.
The domain and the corresponding barrier will be slightly different in the well-conditioned and the general case.

\subsection{Background: the barrier method}
We recall the standard interior-point formalism for solving a convex program
\begin{equation}\label{eq:convex prog}
\begin{split}
\text{minimize} \quad & \ip{\vec c}{\vec p} \\
  \text{subject to} \quad & \vec p \in D,
\end{split}
\end{equation}
where $\vec{c} \in E$ and $D\subseteq E$ is a closed convex set of some Euclidean space $E$, and $\ip{\cdot}{\cdot}$ is the inner product on $E$.
We follow the exposition in~\cite{renegar-interior-point} and omit most proofs;
another source on interior-point methods is~\cite{nesterov-nemirovskii}.

Let us first fix some notation and language.
We denote by $\val$ the optimal value of~\cref{eq:convex prog}, and we say $\vec{p}_\delta \in D$ is a $\delta$-minimizer of~\cref{eq:convex prog} if
\begin{align*}
  \ip{\vec c}{\vec{p}_\delta}\leq\val+\delta.
\end{align*}
We say a functional $\Psi\colon\interior(D)\to \R$ is twice continuously differentiable if its gradient and Hessian, which we shall always denote by
\begin{align*}
  g(\vec{p})=\grad\Psi(\vec{p}), \qquad H(\vec{p})=\grad^2\Psi(\vec{p}),
\end{align*}
are well-defined at any point $\vec{p}\in \interior(D)$, and $H(\vec{p})$ depends continuously on $\vec p$.
Recall that such a function is strictly convex if for any $\vec{p}\in D$, the Hessian $H(\vec{p})$ is positive definite.
In this case, the Hessian $H(\vec p)$ defines a \emph{local norm} on~$E$ for any~$\vec p \in D$:
for $v \in E$, we write
\begin{align*}
  \norm{v}_{(p)} = \sqrt{\ip{v}{H(\vec{p}) v}} \, .
\end{align*}
We also write $B_p^\circ(\vec{p},1) = \{ p' \in E : \norm{p'-p}_{(p)} < 1 \}$ for the \emph{open} ball with radius~$1$ centered at some point~$p\in D$, measured in the local norm $\norm{\cdot}_{(p)}$ at the same point.
The following notion is central in interior-point methods.

\begin{defn}
  Let $D \subseteq E$ be closed convex with non-empty interior.
  A \emph{(strongly non-degenerate) self-concordant barrier functional} for $D$ is a strictly convex and twice continuously differentiable function $\Psi\colon \interior(D) \to \R$, satisfying the following additional properties:
  \begin{enumerate}
  \item For any $\vec{p}\in\interior(D)$, the open ball $B_\vec{p}^\circ(\vec{p},1)$ is contained in $\interior(D)$.
  Moreover, for any $\vec{p}'\in B_\vec{p}^\circ(\vec{p},1)$, we have
  \begin{align*}
    1 - \norm{p' - p}_{(p)} \leq \frac {\norm v_{(p')}} {\norm v_{(p)}} \leq \frac 1 {1 - \norm{p' - p}_{(p)}}
    \quad \text{for all } \vec{v} \in E \setminus \{\vec{0}\}.
  \end{align*}
  \item The \emph{complexity parameter}~$\nu$ of the barrier, defined by
    \begin{equation*}
      \nu:=\sup \, \{ \norm{H(p)^{-1} g(p)}_{(p)}^2 \;:\; p \in \interior(D) \},
    \end{equation*}
    is finite.
  \end{enumerate}
\end{defn}

One can show that a self-concordant barrier $\Psi$ blows up at the boundary of its domain:
if $p_i\in \interior(D)$ converges to $\overline{p}\in\partial D$, then $\Psi(p_i)\to\infty$ and $\norm{g(p_i)}_2\to\infty$ for~$i\to\infty$; see~\cite[Thm~2.2.9]{renegar-interior-point}.
An instructive example is the barrier $\Psi(p) = -\log p$ for the half-line $\R_+ \subseteq \R$, which has complexity parameter $\nu=1$.
Self-concordance can also be defined in terms of an estimate of the third derivatives by the second derivatives.

We now return to the convex program in \cref{eq:convex prog} and suppose that the domain~$D$ is bounded and admits a self-concordant barrier functional~$\Psi$ as above.
Standard barrier methods consist of a \emph{main stage} and a \emph{preliminary stage}.
In the main stage, one follows the \emph{central path}, which consists of the minimizers $\vec{z}(\eta) \in \interior(D)$ of the self-concordant functionals
\begin{align*}
  \Psi_\eta(\vec{p}):=\eta\ip{\vec{c}}{\vec{p}}+\Psi(\vec{p})
\end{align*}
for every $\eta\in\R_{++}$.
These minimizers exist, since the domain is bounded and $\Psi$~blows up at its boundary, and they are unique, since $\Psi$ is strictly convex.
It is well-known (cf.~\cite[(2.12)]{renegar-interior-point}) that
\begin{align}
  \label{eq:central path estimate}
  \ip{\vec{c}}{\vec{z}(\eta)}\leq\val+\frac{\nu}{\eta},
\end{align}
so following the central path as $\eta \to \infty$ guarantees convergence of $\vec{z}(\eta)$ to a minimizer $\vec{z}$ of the objective.
The preliminary stage is used beforehand to find a point sufficiently close to the central path to start the main stage.

The main stage is described in~\cref{algo:IPM main stage}.
One assumes to have a starting parameter $\eta_0$, and a starting point $\vec{p}_0 \in \interior(D)$, which is an approximate minimizer of $\Psi_{\eta_0}$, so in particular close to $\vec{z}(\eta_0)$.
Then, for $i \geq 1$, one chooses an appropriate~$\eta_i > \eta_{i-1}$ such that a single Newton step for the function $\Psi_{\eta_i}$ at point~$p_{i-1}$ produces a point $p_i$ that is guaranteed to remain close to the central path.
Since $\grad \Psi_{\eta_i}(p) = \eta_i c + \grad \Psi(p) = \eta_i c + g(p)$ and $\grad^2 \Psi_{\eta_i}(p) = \grad^2 \Psi(p) = H(p)$,
the point~$p_i$ obtained by taking a single Newton step is given by
\begin{align*}
  p_i = p_{i-1} - (\grad^2 \Psi_{\eta_i}(p_{i-1}))^{-1} \grad \Psi_{\eta_i}(p_{i-1}) = p_{i-1} - H(p_{i-1})^{-1} (\eta_i c + g(p_{i-1})).
\end{align*}
If we write
\begin{align*}
  \alpha_i(p) := \norm{H(p)^{-1}(\eta_i c + g(p))}_{(p)},
\end{align*}
then the length of the Newton step, measured in the local norm at $p_{i-1}$, is $\alpha_i(p_{i-1})$.
Furthermore, one can show that~$\alpha_i(p)$ is directly related to the distance of~$p$ to the minimizer~$z(\eta_i)$ of~$\Psi_{\eta_i}$ (cf.~\cite[Thm.~2.2.5]{renegar-interior-point}).
Therefore, by choosing the~$\eta_i$ such that~$\alpha_i(p_i)$ stays small, we guarantee that the iterates~$p_i$ remain close to the central path.
This is achieved by first estimating~$\alpha_i(p_{i-1})$ in terms of~$\alpha_{i-1}(p_{i-1})$, the ratio~$\eta_i / \eta_{i-1}$, and the complexity parameter~$\nu$ of the barrier, and then bounding~$\alpha_i(p_i)$ in terms of~$\alpha_i(p_{i-1})$ using self-concordance \cite[(2.15)--(2.16)]{renegar-interior-point}.
Provided~$\eta_i \to \infty$ as~$i \to \infty$, \cref{eq:central path estimate} suggests that the~$p_i$ converge to a minimizer of the objective.
A suitable choice of the~$\eta_i$, along with a quantitative guarantee on the precision achieved by any particular~$\vec{p}_i$ is given by the following theorem.

\begin{algorithm}[t]
  \Input{starting point $\vec p_0 \in D$, starting parameter $\eta_0 > 0$, objective $\vec c \in E$, iteration count $T \geq 0$, complexity parameter $\nu \geq 1$ and oracle access to gradient $g(\vec p)$ and Hessian $H(\vec p)$ of barrier $\Psi\colon\interior(D)\to\R$}
  \BlankLine
  \For{$i = 1, \dotsc, T$}{
    $\eta_i \leftarrow \left( 1 + \frac{1}{8 \sqrt{\nu}} \right) \eta_{i-1}$\;
    $\vec{p}_{i} \leftarrow \vec{p}_{i-1} - H(\vec{p}_{i-1})^{-1} \left( \eta_{i} \vec{c} + g(\vec{p}_{i-1}) \right)$ \Comment*{Newton step for $\Psi_{\eta_i}$ at $p_{i-1}$}
  }
  \Return{$\vec{p}_T$}\;
  \caption{MainStage}
  \label{algo:IPM main stage}
\end{algorithm}

\begin{thm}[Main stage, {\cite[p.~46 and~(2.14)]{renegar-interior-point}}]\label{thm:iteration complexity of IPM main stage}
Let $\Psi\colon \interior(D) \to \R$ be a strongly non-degenerate self-concordant barrier functional for $D$ with complexity parameter $\nu \geq 1$.
Let $\eta_0 > 0$ be given, and suppose $\vec{p}_0 \in \interior(D)$ satisfies
\begin{equation}\label{eq:starting condition}
  \alpha_0(p_0) = \norm{H(\vec{p}_0)^{-1}(\eta_0\vec{c}+g(\vec{p}_0))}_{(p_0)} \leq \frac{1}{9}.
\end{equation}
Then the iterations of \cref{algo:IPM main stage} are well-defined and we have, for all $i\in[T]$, that $\alpha_i(p_i) \leq \frac{1}{9}$, $\norm{p_i - z(\eta_i)}_{(z(\eta_i))} \leq \frac 1 5$, and $\ip c {p_i} \leq \val + \frac6{5\eta_i} \nu$.

In particular, for $T \geq 10 \sqrt{\nu} \log ( \frac{6}{5} \frac{\nu}{\eta_0 \delta} )$, \cref{algo:IPM main stage} returns a point $p_T \in \interior(D)$ satisfying
\begin{align*}
  \ip{\vec{c}}{\vec{p}_T} \leq \val + \delta.
\end{align*}
\end{thm}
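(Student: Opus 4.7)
The result is an instance of the classical short-step barrier-method analysis; my plan is to replay that analysis, with the key work being an induction on $i \in \{0,1,\dots,T\}$ maintaining the invariant $\alpha_i(p_i) \leq \tfrac{1}{9}$. The base case is the starting assumption~\eqref{eq:starting condition}. For the inductive step I split the transition from $(\eta_{i-1},p_{i-1})$ to $(\eta_i,p_i)$ into two sub-steps: (i) updating $\eta_{i-1}\to\eta_i$ with the point held fixed, and (ii) taking the Newton step for $\Psi_{\eta_i}$ at $p_{i-1}$.

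For (i), expanding $\alpha_i(p_{i-1}) = \|H(p_{i-1})^{-1}(\eta_i c + g(p_{i-1}))\|_{(p_{i-1})}$ and applying the triangle inequality twice gives
$\alpha_i(p_{i-1}) \leq \alpha_{i-1}(p_{i-1}) + \tfrac{\eta_i-\eta_{i-1}}{\eta_{i-1}}\bigl(\alpha_{i-1}(p_{i-1}) + \|H(p_{i-1})^{-1}g(p_{i-1})\|_{(p_{i-1})}\bigr)$.
The last term is at most $\sqrt{\nu}$ by the very definition of the complexity parameter, and by construction $(\eta_i-\eta_{i-1})/\eta_{i-1} = 1/(8\sqrt{\nu})$. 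Plugging in $\alpha_{i-1}(p_{i-1})\leq \tfrac{1}{9}$ and $\nu\geq 1$ yields an explicit absolute bound $\alpha_i(p_{i-1}) \leq \kappa$ for some $\kappa \leq \tfrac{1}{4}$. For (ii), self-concordance (property (a) of the barrier) gives the standard Newton contraction $\alpha_i(p_i) \leq \alpha_i(p_{i-1})^2/(1-\alpha_i(p_{i-1}))^2$, cf.~\cite[(2.15)--(2.16)]{renegar-interior-point}; with $\alpha_i(p_{i-1})\leq \tfrac 14$ this gives $\alpha_i(p_i)\leq \tfrac 19$, closing the induction.

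Once $\alpha_i(p_i)\leq\tfrac{1}{9}$ is available, the standard relation between this proximity measure and the distance to the central-path point $z(\eta_i)$ (again a consequence of property (a), see~\cite[Thm.~2.2.5]{renegar-interior-point}) gives $\|p_i-z(\eta_i)\|_{(z(\eta_i))} \leq \tfrac 15$. Combining \eqref{eq:central path estimate} with a short local-norm estimate on $\langle c, p_i - z(\eta_i)\rangle$ (using that $\eta_i H(z(\eta_i))^{-1}c = -H(z(\eta_i))^{-1}g(z(\eta_i))$ and hence has local norm at most $\sqrt{\nu}$) then yields $\langle c, p_i\rangle \leq \val + \tfrac{6\nu}{5\eta_i}$. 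Finally, $\eta_T = (1+\tfrac{1}{8\sqrt{\nu}})^T\eta_0 \geq \exp(T/(10\sqrt{\nu}))\cdot\eta_0$ for $\nu\geq 1$, so the choice $T\geq 10\sqrt{\nu}\log(\tfrac{6\nu}{5\eta_0\delta})$ drives the error below $\delta$.

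The main obstacle will be bookkeeping: one has to track the constants in (i) and (ii) carefully enough that the invariant $\alpha_i(p_i)\leq \tfrac 19$ closes at every step, and one must translate this proximity measure into explicit bounds both on the distance $\|p_i-z(\eta_i)\|_{(z(\eta_i))}$ and on the objective gap $\langle c, p_i\rangle - \val$. All ingredients — the change-of-$\eta$ estimate, the quadratic Newton contraction, the complexity-parameter bound, and the proximity-to-minimizer lemma — are the classical tools of~\cite[\S2]{renegar-interior-point}, which I would quote rather than reprove.
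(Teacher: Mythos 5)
Your proposal correctly reconstructs the Renegar-style short-step analysis that the paper itself merely cites (the paper gives no proof of \cref{thm:iteration complexity of IPM main stage}, only a reference to~\cite[p.~46, (2.14)]{renegar-interior-point} together with the same informal outline you follow: split each step into the $\eta$-update plus a Newton step, propagate a bound on $\alpha_i$, then convert proximity to the central path into an objective-gap bound). The constants all check: the $\eta$-update gives $\alpha_i(p_{i-1}) \le \tfrac19 + \tfrac{1}{8\sqrt\nu}\bigl(\tfrac19+\sqrt\nu\bigr) \le \tfrac19+\tfrac1{72}+\tfrac18 = \tfrac14$ for $\nu\ge1$, the Newton contraction then gives $\bigl(\tfrac{1/4}{3/4}\bigr)^2=\tfrac19$, the identity $\eta_i c = -g(z(\eta_i))$ combined with $\|p_i-z(\eta_i)\|_{(z(\eta_i))}\le\tfrac15$ and $\sqrt\nu\le\nu$ yields the $\tfrac{6\nu}{5\eta_i}$ bound, and $\log(1+\tfrac1{8\sqrt\nu})\ge\tfrac1{10\sqrt\nu}$ holds for $\nu\ge1$, which closes the iteration count. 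This is the same approach as the source the paper relies on.
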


The goal of the preliminary stage is to find a starting point and a starting parameter satisfying the hypotheses of \cref{thm:iteration complexity of IPM main stage}.
The algorithm is presented in \cref{algo:IPM pre stage}.
One starts from an arbitrary point $p_0'\in \interior(D)$ and follows the central path associated with the objective $-\vec g(\vec{p}'_0)$ and the same self-concordant barrier.
This objective is chosen because $\vec{p}_0'$ is the minimizer of $-\mu\ip{\vec g(\vec{p}_0')}{\vec{p}} + \Psi(\vec{p})$ when $\mu=1$, i.e., $\vec{p}_0'$ is exactly on the central path at time $1$.
Now one \emph{decreases} the parameter $\mu$, rather than increasing it, until one obtains an approximate minimizer of~$\Psi = \Psi_0$.
Finally, one chooses an appropriate $\eta_0>0$ and performs a single Newton step for~$\Psi_{\eta_0}$ that is guaranteed to yield a point $p_0$ satisfying \cref{eq:starting condition}.
Only this last step depends on the objective~$c$ of the convex program~\eqref{eq:convex prog}.
The following definition and theorem bound the number of iterations of \cref{algo:IPM pre stage} and give a lower bound on~$\eta_0$.

\begin{algorithm}[t]
  \Input{starting point $\vec{p}_0' \in D$,
    objective $\vec c \in E$,
    complexity parameter $\nu \geq 1$ and oracle access to gradient $g(\vec p)$ and Hessian $H(\vec p)$ of barrier $\Psi\colon\interior(D)\to\R$
  }
  \BlankLine
  $\mu_0 \leftarrow 1$\;
  $\vec g_0 \leftarrow g(\vec{p}_0')$\;
  $i \leftarrow 0$\;
  \While{$\norm{H(\vec{p}_i')^{-1}g(\vec{p}_i')}_{(p_i')} > \frac{1}{6}$}{
    $i \leftarrow i+1$\;
    $\mu_i \leftarrow \left( 1 - \frac{1}{8\sqrt{\nu}} \right) \mu_{i-1}$\;
    $\vec{p}_{i}' \leftarrow \vec{p}_{i-1}' - H(\vec{p}_{i-1}')^{-1} (-\mu_{i} \vec{g}_0 + g(\vec{p}_{i-1}'))$\Comment*{Newton step for $-\mu_i g_0 \!+\! \Psi$ at $p_{i-1}'$}
  }
  $\eta_0 \leftarrow (12 \norm{H(p_i')^{-1} c}_{(p_i')})^{-1}$ \;
  $p_0 \leftarrow p_i' - H(p_i')^{-1} (\eta_0 c + g(p_i'))$ \Comment*{Newton step for $\Psi_{\eta_0}$ at $p_i'$}
  \Return{$(p_0, \eta_0)$}\;
  \caption{PreliminaryStage}
  \label{algo:IPM pre stage}
\end{algorithm}

\begin{defn}[Symmetry]\label{def:symmetry}
  Let $D \subseteq E$ be a compact convex subset, and let $\vec{p} \in \interior(D)$.
  The \emph{symmetry} of $D$ with respect to $\vec{p}$ is defined by
  \[
    \sym{p} = \max \, \{ a \geq 0 \;:\; \vec{p} + a (\vec{p} - D) \subseteq D \}.
  \]
\end{defn}

\noindent
If $L$ is an affine line through $\vec{p}$, then $L \cap D$ consists of two chords from $\vec{p}$ to the boundary of $D$; the symmetry parameter $\sym{\vec{p}}$ is the smallest possible ratio of the lengths of the smallest and longest chord.
Therefore, the symmetry is always at most $1$, and from this description, it is also clear that one can bound the symmetry by providing a ball centered at $\vec{p}$ contained in the interior of $D$, and another ball centered at $\vec{p}$ containing all of $D$; see \cref{lem:symmetrylemma}.

\begin{thm}[Preliminary stage, {\cite[(2.19)]{renegar-interior-point}}]\label{thm:iteration complexity of IPM preliminary stage}
  Let $\Psi\colon\interior(D) \to \R$ be a strongly non-degenerate self-concordant barrier functional for $D$ with complexity parameter $\nu \geq 1$, let $\vec{p}_0' \in \interior(D)$ be a starting point, and let $c \in E$ be the objective.
  Then~\cref{algo:IPM pre stage} with this choice of starting point~$p'_0$ outputs a vector $\vec{p}_0 \in \interior(D)$ and $\eta_0 > 0$ satisfying \cref{eq:starting condition}, i.e.,
  \begin{align*}
    \norm{H(p_0)^{-1} (\eta_0 c + g(p_0))}_{(p_0)} \leq \frac{1}{9},
  \end{align*}
  after at most
  \begin{align*}
    \frac{\log(18\nu(1+\frac{1}{\sym{\vec{p}_0'}}))}{-\log(1-\frac{1}{8\sqrt{\nu}})} \leq
  8\sqrt{\nu}\log\left( \frac{36\nu}{\sym{\vec{p}_0'}} \right)
  \end{align*}
  iterations. 
  Moreover, we have the lower bound $\eta_0 \geq \frac 1 {12 (V - \val)}$, where $V = \max_{\vec{p} \in D} \, \ip{\vec{c}}{\vec{p}}$.
\end{thm}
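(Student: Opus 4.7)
My plan is to mirror the analysis of \cref{thm:iteration complexity of IPM main stage}, noting that \cref{algo:IPM pre stage} is really path-following for an \emph{auxiliary} convex program whose objective is~$-\vec g_0 = -g(\vec p_0')$ and whose role is to carry the iterate from~$\vec p_0'$ to a neighborhood of the analytic center of~$\Psi$. Concretely, consider the family $\widetilde\Psi_\mu(\vec p) := -\mu\ip{\vec g_0}{\vec p} + \Psi(\vec p)$. Because $\vec g_0 = g(\vec p_0')$, the starting point~$\vec p_0'$ is the \emph{exact} minimizer of $\widetilde\Psi_1$ (its gradient there is $-\vec g_0 + g(\vec p_0') = 0$), so $\vec p_0'$ lies on the central path of~$\widetilde\Psi_\mu$ at~$\mu=1$. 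As~$\mu \to 0$, this central path limits to the analytic center of~$\Psi$. The same self-concordance bounds that drive the main stage show that decreasing~$\mu$ by the factor $1-\tfrac{1}{8\sqrt{\nu}}$ between Newton steps keeps the Newton residual of~$\widetilde\Psi_{\mu_i}$ at~$\vec p_i'$ below~$\tfrac{1}{9}$ throughout, and in particular keeps the iterates in~$\interior(D)$.

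Next I would analyze the loop's exit condition $\norm{H(\vec p_i')^{-1} g(\vec p_i')}_{(\vec p_i')} \leq \tfrac{1}{6}$. By the triangle inequality in the local norm,
\[
  \norm{H(\vec p_i')^{-1} g(\vec p_i')}_{(\vec p_i')} \leq \underbrace{\norm{H(\vec p_i')^{-1}(-\mu_i \vec g_0 + g(\vec p_i'))}_{(\vec p_i')}}_{\leq\,1/9} + \mu_i \norm{H(\vec p_i')^{-1} \vec g_0}_{(\vec p_i')},
\]
so it suffices that the second term is at most~$\tfrac{1}{6} - \tfrac{1}{9}$. Using self-concordance to compare the local norm at~$\vec p_i'$ to that at~$\vec p_0'$, together with the bound $\norm{H(\vec p_0')^{-1} \vec g_0}_{(\vec p_0')}^2 \leq \nu$ from the definition of the complexity parameter, one finds that this holds once $\mu_i$ is of order $\sym(\vec p_0')/\nu$. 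The symmetry appears here because Renegar's analysis of the distance along the central path from~$\vec p_0'$ to the analytic center scales like $\sqrt{\nu}\cdot\log(1/\sym(\vec p_0'))$. Since $\mu_i = (1 - \tfrac{1}{8\sqrt\nu})^i$, using $-\log(1-x) \geq x$ gives the claimed bound $8\sqrt{\nu}\log(36\nu/\sym(\vec p_0'))$.

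Finally, for the last Newton step on line~10, the choice $\eta_0 = (12 \norm{H(\vec p_i')^{-1} \vec c}_{(\vec p_i')})^{-1}$ gives, by the triangle inequality,
\[
  \norm{H(\vec p_i')^{-1}(\eta_0 \vec c + g(\vec p_i'))}_{(\vec p_i')} \leq \eta_0 \norm{H(\vec p_i')^{-1} \vec c}_{(\vec p_i')} + \tfrac{1}{6} = \tfrac{1}{12} + \tfrac{1}{6} = \tfrac{1}{4}.
\]
One Newton step contracts this to at most $(\tfrac{1}{4})^2/(1-\tfrac{1}{4})^2 = \tfrac{1}{9}$ by the standard self-concordant Newton estimate \cite[(2.15)--(2.16)]{renegar-interior-point}, which is precisely \cref{eq:starting condition}. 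The lower bound on~$\eta_0$ is dual: since the unit Dikin ball $B_{\vec p_i'}^\circ(\vec p_i', 1)$ is contained in~$D$, we have $\norm{H(\vec p_i')^{-1}\vec c}_{(\vec p_i')} = \max\{\ip{\vec c}{\vec v} : \norm{\vec v}_{(\vec p_i')} \leq 1\} \leq V - \ip{\vec c}{\vec p_i'} \leq V - \val$, giving $\eta_0 \geq \tfrac{1}{12(V-\val)}$. The main obstacle is tracking the numerical constants $\tfrac{1}{6}, \tfrac{1}{9}, 12, 36, 8$ through the various self-concordance estimates; this is tedious but standard, and I would follow~\cite{renegar-interior-point} closely for the bookkeeping.
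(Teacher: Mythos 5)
The paper does not prove this theorem; it simply cites Renegar~\cite[(2.19)]{renegar-interior-point}, so you are effectively being asked to reconstruct Renegar's argument. Several parts of your sketch are sound: that $\vec p_0'$ exactly minimizes $\widetilde\Psi_1$; the triangle-inequality split of the exit test; the iteration arithmetic (using $-\log(1-x)\ge x$ and $1+1/\sym(\vec p_0')\le 2/\sym(\vec p_0')$); the final Newton contraction $\alpha\mapsto(\alpha/(1-\alpha))^2$ taking $1/4$ to $1/9$; and the Dikin-ball duality argument giving $\norm{H(\vec p_i')^{-1}\vec c}_{(\vec p_i')}\le V-\val$ and hence $\eta_0\ge 1/(12(V-\val))$.

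The genuine gap is in the central estimate. You propose to bound $\mu_i\norm{H(\vec p_i')^{-1}\vec g_0}_{(\vec p_i')}$ by "using self-concordance to compare the local norm at $\vec p_i'$ to that at $\vec p_0'$" and then invoking $\norm{H(\vec p_0')^{-1}\vec g_0}_{(\vec p_0')}^2\le\nu$. That comparison only holds inside the unit Dikin ball about $\vec p_0'$, but the iterates $\vec p_i'$ leave that ball almost immediately---traversing a large fraction of $D$ is the whole point of the preliminary stage---so no useful uniform bound follows this way. The step Renegar actually uses is a \emph{global} dual-norm estimate of the form $\sup_{p\in\interior(D)}\norm{H(p)^{-1}\vec g_0}_{(p)}\le \nu\bigl(1+1/\sym(\vec p_0')\bigr)$. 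Its proof does not compare metrics at all; it combines the barrier inequality $\ip{g(\vec p_0')}{q-\vec p_0'}\le\nu$ for all $q\in D$ (a direct consequence of the complexity-parameter definition) with a reflection about $\vec p_0'$ justified by the symmetry of $D$, which controls the negative direction of $\ip{\vec g_0}{\cdot}$ as well. The factor $18\nu(1+1/\sym(\vec p_0'))$ in the iteration count is exactly this $\nu(1+1/\sym(\vec p_0'))$ bound divided by the $1/18$ slack between the $1/9$-neighborhood of the auxiliary central path and the $1/6$ exit threshold. Your statement about ``the distance along the central path scaling like $\sqrt\nu\log(1/\sym(\vec p_0'))$'' is a consequence of this bound, not a substitute for it.
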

\noindent
Together, \cref{thm:iteration complexity of IPM main stage,thm:iteration complexity of IPM preliminary stage} can be summarized as follows:

\begin{thm}[Theorem 2.4.1 in~\cite{renegar-interior-point}]
  \label{thm:renegar algorithm}
  Let $D \subseteq E$ be a compact convex subset with non-empty interior.
  Assume $\Psi\colon \interior(D) \to \R$ is a strongly non-degenerate self-concordant barrier functional for $D$ with complexity parameter $\nu \geq 1$.
  Furthermore, for $\vec{c}\in E$, define $\val=\min_{\vec{p} \in D}\,\ip{\vec{c}}{\vec{p}}$ and $V=\max_{\vec{p} \in D}\,\ip{\vec{c}}{\vec{p}}$.
  Finally, let $0 < \delta < V-\val$ be the desired precision and let $p'_0 \in \interior(D)$ be a starting point for the preliminary stage.
  Then, \cref{algo:IPM pre stage} outputs a point $\vec{p}_0 \in \interior(D)$ and a parameter $\eta_0 \geq \frac1{12(V-\val)}$ satisfying the hypotheses of $\cref{thm:iteration complexity of IPM main stage}$.
  \Cref{algo:IPM main stage} with inputs $\vec{p}_0$, $\eta_0$ and $T \geq 10 \sqrt{\nu} \log(\frac65 \frac{\nu}{\eta_0 \delta})$ outputs a point $\vec{p}_T$ satisfying
  \begin{align*}
    \ip c {p_T} - \val \leq \delta.
  \end{align*}
  The total number of iterations is upper bounded by
  \begin{align*}
    18 \sqrt{\nu}\log\left( \frac{36\nu}{\sym{\vec{p}_0'}} \frac{V-\val}\delta \right)
  \end{align*}
  and each iteration involves computing the gradient and Hessian of the self-concordant barrier $\Psi$ and basic matrix arithmetic.
\end{thm}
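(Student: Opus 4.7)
The plan is to prove the theorem by directly composing the two preceding results, \cref{thm:iteration complexity of IPM preliminary stage} for the preliminary stage and \cref{thm:iteration complexity of IPM main stage} for the main stage, feeding the output of the former as input to the latter. The statement is essentially a packaging of the hypotheses and conclusions of these two theorems, so the only real content is arithmetic bookkeeping on the iteration counts.

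First, I would invoke \cref{thm:iteration complexity of IPM preliminary stage} with the barrier $\Psi$, objective $\vec{c}$, and user-supplied starting point $\vec{p}'_0 \in \interior(D)$. This guarantees that \cref{algo:IPM pre stage} terminates within at most $8\sqrt{\nu}\log(36\nu/\sym{\vec{p}'_0})$ iterations and outputs a pair $(\vec{p}_0, \eta_0)$ with $\vec{p}_0 \in \interior(D)$ satisfying the starting condition~\eqref{eq:starting condition} and $\eta_0 \geq 1/(12(V-\val))$. This is exactly the first assertion of the theorem.

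Second, since $(\vec{p}_0, \eta_0)$ meet the hypotheses of \cref{thm:iteration complexity of IPM main stage}, running \cref{algo:IPM main stage} for $T \geq 10\sqrt{\nu}\log\!\bigl(\tfrac{6}{5}\,\nu/(\eta_0 \delta)\bigr)$ iterations produces a point $\vec{p}_T$ with $\ip{\vec{c}}{\vec{p}_T} - \val \leq \delta$. Substituting the lower bound $\eta_0 \geq 1/(12(V-\val))$ into the argument of the logarithm yields
\[
  10\sqrt{\nu}\log\!\left(\tfrac{6}{5}\,\frac{\nu}{\eta_0 \delta}\right) \leq 10\sqrt{\nu}\log\!\left(\frac{72\nu(V-\val)}{5\delta}\right),
\]
which is the expression contributed by the main stage.

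Finally, I would sum the two iteration counts and simplify them into the single clean bound stated. By definition $\sym{\vec{p}'_0} \leq 1$, so $72\nu/5 \leq 36\nu \leq 36\nu/\sym{\vec{p}'_0}$; and by hypothesis $\delta < V-\val$, so $\log((V-\val)/\delta) \geq 0$. Consequently
\[
  \log\!\left(\frac{36\nu}{\sym{\vec{p}'_0}}\right) \leq \log\!\left(\frac{36\nu}{\sym{\vec{p}'_0}}\cdot\frac{V-\val}{\delta}\right),\quad \log\!\left(\frac{72\nu(V-\val)}{5\delta}\right) \leq \log\!\left(\frac{36\nu}{\sym{\vec{p}'_0}}\cdot\frac{V-\val}{\delta}\right),
\]
so the total is bounded by $(8+10)\sqrt{\nu}\log\!\bigl(36\nu(V-\val)/(\sym{\vec{p}'_0}\,\delta)\bigr) = 18\sqrt{\nu}\log\!\bigl(36\nu(V-\val)/(\sym{\vec{p}'_0}\,\delta)\bigr)$. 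The per-iteration cost is obvious from inspection of \cref{algo:IPM main stage,algo:IPM pre stage}: each step involves a single gradient/Hessian evaluation and the solution of one linear system for the Newton direction. There is no conceptual obstacle here; the only care needed is to check the two elementary monotonicity inequalities above, which are what let the sum of two logarithmic terms be absorbed into one.
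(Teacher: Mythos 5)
Your proof is correct and follows exactly the route the paper implicitly takes: the paper introduces the theorem with ``Together, \cref{thm:iteration complexity of IPM main stage,thm:iteration complexity of IPM preliminary stage} can be summarized as follows'' and then cites Renegar without giving details, and your composition of those two results, together with the monotonicity bookkeeping using $\sym{\vec p_0'}\le 1$ and $\delta < V-\val$ to absorb both logarithms into the single claimed bound, is precisely what is needed.
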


\subsection{Barrier method for unconstrained geometric programming}\label{subsec:barrier}
We now return to our concrete situation, and show how to apply the general framework to the unconstrained GP problem.
Fix $\Omega = \{\omega_1, \dotsc, \omega_k\} \subseteq \R^n$, $\vec{q} \in \R_{++}^k$, and a shift~$\theta \in \conv \Omega$.
Following the general strategy outlined above, we first relate the geometric program to the minimization of a linear function over a compact convex domain.
For $R>0$, define
\begin{equation}\label{eq:D_theta R}
\begin{aligned}
  D_{\theta,R} = \Bigl\{ (x,z,t) \in W \times \R^k \times \R \;:\; &\sum_{i=1}^k z_i \leq 1, \;\; q_i e^{\ip {\omega_i-\theta} x} \leq z_i e^t \;\; \forall i\in[k], \\
&t \leq \log(5k\norm q_1), \;\; \norm x_2 \leq R \Bigr\}.
\end{aligned}
\end{equation}
Here we recall that $W$ is the span of the vectors~$\omega_i - \theta$ or, equivalently, the direction vector space of $\affspan\Omega$.
Note $(x,z,t)\in D_{\theta, R}$ implies $z_i>0$ for all $i\in[k]$.
The convexity of the domain~$D_{\theta,R}$ follows from the convexity of the exponential map and of the $\ell_2$-norm ball.
Let $c=(0,\dots,0; 0,\dots,0;1)$.
Then, any point $p=(x,z,t) \in D_{\theta,R}$ consists of a vector $x$ with norm $\norm x_2 \leq R$ and value
\begin{align}\label{eq:gp vs sum z}
  F_\theta(x) = \log \sum_{i=1}^k q_i e^{\ip {\omega_i-\theta} x} \leq \log \sum_{i=1}^k z_i e^t \leq \ip c p = t.
\end{align}
In fact, if $(x,z,t)$ is a $\delta$-approximate minimizer of~\eqref{eq:convex prog} with domain $D_{\theta,R}$ and the above objective, then $x$ is a $\delta$-approximate minimizer of $F_\theta(x)$, restricted to vectors of norm $\norm x_2 \leq R$.
This is implied by the following lemma.

\begin{lem}[Value]
For any $R>0$, we have
\begin{align}
\label{eq:val vs gp}
  \val :=
  \min_{(x,z,t) \in D_{\theta,R}} t &=
  \min_{\norm x_2 \leq R} F_\theta(x) \\
\label{eq:V}
  V :=
  \max_{(x,z,t)\in D_{\theta,R}} t &=
  \log(5k \norm q_1).
\end{align}
As a consequence,
\begin{align}\label{eq:V - val bound}
  \log(5k)
  \leq V - \val \leq
  \log(5k\beta)
\end{align}
\end{lem}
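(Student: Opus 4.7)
The plan is to prove each of the three assertions as matching inequalities, using the chain in \cref{eq:gp vs sum z} together with the constraints defining $D_{\theta,R}$ in \cref{eq:D_theta R}.

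For \cref{eq:val vs gp}, the direction $\val \geq \min_{\norm x_2 \leq R} F_\theta(x)$ is immediate: any $(x,z,t) \in D_{\theta,R}$ satisfies $\norm x_2 \leq R$ and $F_\theta(x) \leq t$ by \cref{eq:gp vs sum z}, so minimizing over both sides yields the bound. For the reverse inequality, I would pick a minimizer $x^*$ of $F_\theta$ on $\{\norm x_2 \leq R\}$; by \cref{eq:f stab} we may assume $x^* \in W$. I would then construct the canonical lift $t^* := F_\theta(x^*)$ and $z_i^* := q_i e^{\ip{\omega_i - \theta}{x^*}} / e^{t^*}$. By the definition of $F_\theta$ we have $\sum_i z_i^* = 1$, the inequalities $q_i e^{\ip{\omega_i - \theta}{x^*}} \leq z_i^* e^{t^*}$ hold with equality, and the cap $t^* \leq \log(5k\norm q_1)$ follows from the trivial bound $F_\theta(x^*) \leq F_\theta(0) = \log\norm q_1$. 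Hence $(x^*,z^*,t^*) \in D_{\theta,R}$ and $\val \leq t^* = F_\theta(x^*)$.

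For \cref{eq:V}, the upper bound $V \leq \log(5k\norm q_1)$ is built into the definition of $D_{\theta,R}$. For the matching lower bound I would exhibit the explicit feasible point $(0,\, q/(5k\norm q_1),\, \log(5k\norm q_1))$, which trivially meets every defining constraint (using $\sum_i z_i = 1/(5k) \leq 1$ and $q_i = z_i e^t$; the norm constraint on $x=0$ requires only $R > 0$).

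The consequence \cref{eq:V - val bound} then follows by combining the two equalities with the sandwich $\log\min_i q_i \leq F_\theta^* \leq F_\theta(0) = \log\norm q_1$, the left inequality being \cref{eq:lower bound optimal value}: the upper bound reads $V - \val \leq \log(5k\norm q_1/\min_i q_i) = \log(5k\beta)$, and the lower bound reads $V - \val \geq \log(5k\norm q_1/\norm q_1) = \log(5k)$. There is no substantive obstacle; the only point worth highlighting is that the cap $\log(5k\norm q_1)$ in $D_{\theta,R}$ is calibrated precisely so that the canonical lift of the minimizer remains feasible, leaving a factor of $5k$ of slack that will be convenient for the interior-point analysis in the sequel.
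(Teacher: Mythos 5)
Your proof is correct and follows essentially the same route as the paper: the lower bound on $\val$ via \cref{eq:gp vs sum z}, the canonical lift $(x^*, z^*, t^*)$ with $t^* = F_\theta(x^*)$ for the reverse inequality, an explicit feasible point achieving $t = \log(5k\norm q_1)$, and \cref{eq:lower bound optimal value} for the consequence. The only (immaterial) deviation is your choice of witness $(0, q/(5k\norm q_1), \log(5k\norm q_1))$ for \cref{eq:V}, whereas the paper uses $(0, \frac1k, \dotsc, \frac1k, \log(5k\norm q_1))$; both are feasible and the argument is identical.
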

\begin{proof}
For the first claim, note that \cref{eq:gp vs sum z} implies that
\begin{align*}
  \val \geq \min_{\norm x_2 \leq R} F_\theta(x).
\end{align*}
Now consider a minimizer $x$ of the right-hand side, which we by \cref{eq:f stab} can take in $W$.
Then, $t := F_\theta(x)$ is such that
\begin{align*}
  t \leq F_\theta(0) = \log \norm q_1 \leq \log (5 k \norm q_1),
\end{align*}
and if we set $z_i := q_i e^{\ip {\omega_i-\theta} x - t}$ then
\begin{align*}
  \sum_{i=1}^k z_i = \sum_{i=1}^k q_i e^{\ip {\omega_i-\theta} x} e^{-t} = e^{F_\theta(x)} e^{-F_\theta(x)} = 1.
\end{align*}
Thus we find that $(x,z,t) \in D_{\theta,R}$, with $t = F_\theta(x)$, and \cref{eq:val vs gp} follows.

To see that \cref{eq:V} holds, note that the upper bound $V \leq \log(5k\norm q_1)$ follows directly by the constraint on the $t$ variable, and is achieved for the point
\begin{align*}
  p = \bigl( 0,\dots,0; \frac1k, \dots, \frac1k; \log(5k \norm q_1) \bigr).
\end{align*}
Now \cref{eq:V - val bound} follows from the preceding, along with the estimate
\begin{equation*}
  \log \min_{i\in[k]} q_i \leq F_\theta^* \leq \val \leq F_\theta(0) = \log \norm q_1,
\end{equation*}
where the lower bound on $F_\theta^*$ is from \cref{eq:lower bound optimal value}.
\end{proof}

The key to applying interior-point methods to unconstrained geometric programming is the following result, which gives an explicit barrier functional for~$D_{\theta,R}$.
It is well-known to experts and follows from standard barrier functionals and combination rules.

\begin{prop}[Barrier]\label{prop:domainbarrier}
The compact domain $D_{\theta,R} \subseteq W \times \R^k \times \R$ has non-empty interior.
Moreover, it admits the self-concordant barrier functional
\begin{align*}
  \Psi_{\theta,R}(x,z,t)
  = &- \sum_{i=1}^k \log z_i
      - \sum_{i=1}^k \log \left( \log z_i - \ip{\omega_i - \theta}x + t - \log q_i \right) \\
    &- \log \bigl( \log(5k\norm q_1) - t \bigr)
      - \log \bigl( 1 - \textstyle\sum_{i=1}^k z_i \bigr)
      - \log \bigl( R^2 - \norm x_2^2 \bigr),
\end{align*}
with complexity parameter~$\nu=2k+3$.
\end{prop}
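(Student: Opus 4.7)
The plan is to split the statement into two parts: first, exhibit an explicit point in the interior of $D_{\theta,R}$, and second, verify self-concordance of $\Psi_{\theta,R}$ by recognising it as a sum of standard self-concordant barriers whose complexity parameters add to $2k+3$. The main tool throughout will be the classical calculus of self-concordant barriers: sums of (strongly non-degenerate) self-concordant barriers defined on the intersection of their domains are again self-concordant with complexity parameter equal to the sum of the individual parameters, and affine pullback preserves both self-concordance and the complexity parameter.

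For non-emptiness I would take the explicit candidate $x = 0 \in W$, $z_i = 1/(2k)$ for every $i \in [k]$, and $t = \log(3k\norm{q}_1)$. Then $\sum_i z_i = 1/2 < 1$, $\norm{x}_2 = 0 < R$, $t < \log(5k\norm{q}_1)$, and for every $i$ one has $q_i e^{\ip{\omega_i - \theta}{x}} = q_i \leq \norm{q}_1 < \tfrac{3}{2}\norm{q}_1 = z_i e^t$. Every defining inequality of $D_{\theta,R}$ holds strictly, so this point lies in the interior of the domain.

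For self-concordance I would group the summands of $\Psi_{\theta,R}$ into four families. The first consists, for each $i \in [k]$, of the pair
\[
  -\log z_i - \log\bigl(\log z_i - \ip{\omega_i - \theta}{x} + t - \log q_i\bigr);
\]
the remaining three families are the single terms $-\log(\log(5k\norm{q}_1) - t)$, $-\log(1 - \sum_i z_i)$, and $-\log(R^2 - \norm{x}_2^2)$. Each pair in the first family is the affine pullback of the bivariate barrier $\psi(y, z) := -\log(\log z - y) - \log z$, defined on $\{(y,z) : z > 0,\ y < \log z\}$, along the affine map $(x, z, t) \mapsto (\ip{\omega_i - \theta}{x} + \log q_i - t,\ z_i)$. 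The function $\psi$ is a classical $2$-self-concordant barrier (the bivariate exponential-cone barrier from the Nesterov--Nemirovskii library), and affine pullback preserves both self-concordance and the complexity parameter, so each pair contributes $2$ to the total. The three remaining terms are the standard $1$-self-concordant logarithmic barriers for the half-spaces $t < \log(5k\norm{q}_1)$ and $\sum_i z_i < 1$, and for the Euclidean ball $\norm{x}_2 < R$, respectively. Since the intersection of their domains is precisely the interior of $D_{\theta,R}$, the sum rule produces a self-concordant barrier on $D_{\theta,R}$ with complexity parameter $2k + 1 + 1 + 1 = 2k + 3$, as claimed.

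The only non-routine step is the $2$-self-concordance of $\psi$, which I would verify by a short direct computation: writing $u := \log z - y > 0$, one finds $H^{-1} g = \tfrac{u}{1+u}(u,-z)^\top$, whence $g^\top H^{-1} g = \tfrac{1+2u}{1+u} < 2$, establishing the complexity parameter bound, and the third-derivative estimate required for self-concordance is a similarly short calculation. Everything else reduces to bookkeeping with the standard calculus of self-concordant barriers.
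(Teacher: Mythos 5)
Your proof is correct and takes essentially the same approach as the paper: an explicit interior point, followed by decomposing $\Psi_{\theta,R}$ into affine pullbacks of the bivariate exponential-cone barrier (contributing $2$ each) plus three logarithmic barriers for the remaining half-space and ball constraints (contributing $1$ each), combined via the standard sum and pullback rules for self-concordant barriers. The only cosmetic differences are that the paper bundles the $t$-constraint into a single product-domain barrier before the affine pullback (arriving at $2k+1$ and then adding the last two), and that it cites Nesterov--Nemirovskii for the $2$-self-concordance of the exponential-cone barrier rather than verifying it directly as you sketch.
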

\begin{proof}
  It is clear that $D_{\theta,R}$ has non-empty interior (for example, \cref{eq:starting point} below gives a point in the interior).
  We now derive the barrier functional.
  It is well-known that the epigraph of the exponential, given by
  \begin{align*}
    \{(y,z) \in \R \times \R \;:\; e^y \leq z \},
  \end{align*}
  admits the self-concordant barrier functional $(y,z) \mapsto - \log z - \log(\log z - y)$, with complexity parameter $2$; see \cite[Prop.~5.3.3]{nesterov-nemirovskii}.
  Recall also that the logarithmic barrier functional $\tau \mapsto -\log \tau$ for the half line $\R_+ \subseteq \R$ has complexity parameter~$1$.
  Then the closed convex set
  \begin{align}\label{eq:unconstraineddomain}
    \bigl\{(y,z,\tau) \in \R^k \times \R^k \times \R \;:\; e^{y_i} \leq z_i \text{ for all } i\in[k], \; \tau \geq 0 \bigr\}
  \end{align}
  is simply the product of $k$ copies of the epigraph of the exponential and the half line, so a barrier functional $\Psi'$ is given by the sum of the barrier functionals for each term in the product (cf.~\cite[Prop.~2.3.1 (iii)]{nesterov-nemirovskii}), i.e.,
  \begin{align*}
    \Psi'(y, z, \tau) = - \sum_{i=1}^k \log z_i - \sum_{i=1}^k \log(\log z_i - y_i) - \log \tau.
  \end{align*}
  The complexity parameter is then at most the sum of the individual complexity parameters, i.e., $2k + 1$.
  Next, note that
  \begin{align*}
    \bigl\{(x,z,t) \in W \times \R^k \times \R \;:\; q_i e^{\ip {\omega_i-\theta}x} \leq z_i e^t \text{ for all } i\in[k], \; t \leq \log(5k \norm q_1) \bigr\}
  \end{align*}
  is the preimage of~\cref{eq:unconstraineddomain} under the injective affine transformation
  \begin{align*}
    &A \colon W \times \R^k \times \R \to \R^k \times \R^k \times \R, \quad (x,z,t) \mapsto \bigl(\ip {\omega_1-\theta}x - t + \log q_1, \dots \\
    &\qquad \dots, \ip {\omega_k-\theta}x - t + \log q_k; \, z_1, \dots, z_k; \, \log(5k\norm q_1) - t \bigr),
  \end{align*}
  hence by \cite[Prop.~2.3.1 (i)]{nesterov-nemirovskii} admits the self-concordant barrier functional
  \begin{align*}
    (\Psi' \circ A)(x,z,t)
  &= - \sum_{i=1}^k \log z_i - \sum_{i=1}^k \log\left(\log z_i - \ip {\omega_i-\theta}x + t - \log q_i \right) \\
  &- \log \bigl( \log(5k\norm q_1) - t \bigr)
  \end{align*}
  with the same complexity parameter $2k+1$.
  Finally, we may incorporate the linear constraint $\sum_{i=1}^k z_i \leq 1$ by adding the logarithmic barrier~$-\log(1 - \sum_{i=1}^k z_i)$, and the $\ell_2$-norm constraint $\norm x_2 \leq R$ by adding the barrier~$-\log(R^2 - \norm x_2^2$); see \cite[Prop.~2.3.1 (ii)]{renegar-interior-point}.
  This increases the complexity parameter to $2k+3$ and results in the desired domain and barrier.
\end{proof}

Next we need to bound the symmetry of a suitable starting point.
For this, we will use the following lemma.

\begin{lem}\label{lem:symmetrylemma}
  Let $D \subseteq E$ be a closed convex subset, and let $p\in D$.
  Suppose that $r < R$ are two radii such that
  $B(p, r) \subseteq D \subseteq B(p,R)$,
  where the closed balls are taken with respect to an \emph{arbitrary} norm on~$E$.
  Then, $D$ is bounded, $p\in\interior(D)$,~and
  \begin{align*}
    \sym p \geq \frac r R.
  \end{align*}
\end{lem}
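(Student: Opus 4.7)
The plan is straightforward since all three claims follow almost directly from the given inclusions. First, $D \subseteq B(p,R)$ immediately gives boundedness of $D$, and $B(p,r) \subseteq D$ with $r > 0$ shows that $p$ lies in the interior of $D$ (using that $B(p,r)$ is open with respect to any norm when $r > 0$, or noting that closed balls of positive radius have non-empty interior coinciding with the open ball). These two parts are routine and will take only a single sentence each.

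The heart of the lemma is the symmetry bound, and I would prove it by a direct verification from \cref{def:symmetry}. Set $a = r/R \in (0,1)$ and let $q \in D$ be arbitrary. I claim that the reflected point $p + a(p - q)$ lies in $B(p,r)$, and hence in $D$ by hypothesis. Indeed, the distance of $p + a(p-q)$ from $p$ equals $a \, \norm{p - q}$, which is at most $a R = r$ because $q \in D \subseteq B(p,R)$. Since $q \in D$ was arbitrary, this shows $p + a(p - D) \subseteq B(p,r) \subseteq D$, so by the definition of $\sym p$ we get $\sym p \geq a = r/R$.

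There is no real obstacle here; the only thing to be careful about is that the definition of $\sym p$ and the balls $B(p, \cdot)$ are expressed with respect to the \emph{same} (arbitrary) norm, so the calculation $\norm{(p + a(p-q)) - p} = a\norm{p-q} \leq aR = r$ is valid independently of which norm is chosen. I would make this point explicit to justify the phrase ``with respect to an arbitrary norm'' in the statement.
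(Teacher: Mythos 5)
Your proof is correct and follows essentially the same approach as the paper: verify the symmetry bound directly from the definition by checking that for any $q\in D$, the point $p + \frac rR(p-q)$ lands in $B(p,r)\subseteq D$, using homogeneity of the norm and $\norm{p-q}\leq R$. The only minor quibble is your remark that the definition of $\sym p$ is ``expressed with respect to the same norm''---in fact \cref{def:symmetry} is purely affine and norm-free; what matters is only that both inclusions in the hypothesis use the same norm, so the computation $\norm{a(p-q)} = a\norm{p-q}\leq aR = r$ stays internally consistent, which is the point you make anyway.
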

\begin{proof}
  Clearly $D$ is bounded and contains~$p$ in its interior.
  For the symmetry claim, note that for any $u\in D$, we have $u \in B(p, R)$, so $p - u \in B(0, R)$, and hence
  \begin{align*}
    p + \frac rR (p - u) \in B(p, r) \subseteq D.
  \end{align*}
  This shows that
  \begin{align*}
    p + \frac rR (p - D) \subseteq D,
  \end{align*}
  which implies the desired lower bound on the symmetry (cf.~\cref{def:symmetry}).
\end{proof}

\noindent
One important aspect of \cref{lem:symmetrylemma} is the freedom in choosing a norm; in particular, we do not assume that the norm comes from the inner product on~$E$.
We will now use this freedom to bound the symmetry of the following starting point:
\begin{align}\label{eq:starting point}
  p'_0 = \bigl( 0,\dots,0; \frac1{2k}, \dots, \frac1{2k}; \log(4k\norm q_1) \bigr),
\end{align}
which is clearly contained in the interior of~$D_{\theta,R}$.

\begin{prop}[Symmetry bound]\label{prop:general symmetry bound}
For any $R>0$, we can bound the symmetry of~$D_{\theta,R}$ with respect to the point~$p'_0$ by
\begin{align*}
    \frac1{\sym{p'_0}}
  \leq 10\max\bigl( R_\theta R, k, \log(4k\beta) \bigr),
\end{align*}
where we recall that $R_\theta = \max_{i\in[k]} \norm{\omega_i - \theta}_2$ as defined in \cref{eq:R_theta}.
\end{prop}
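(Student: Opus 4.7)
The plan is to apply \cref{lem:symmetrylemma} with a carefully scaled max-norm on $W\times\R^k\times\R$ that decouples the three coordinate blocks of $D_{\theta,R}$. Concretely, I would set
\begin{align*}
  r_1 := \min\!\left(R,\ \tfrac{1}{5 R_\theta}\right),\quad r_2 := \tfrac{1}{10k},\quad r_3 := \tfrac{1}{5},
\end{align*}
and use the weighted $\ell_\infty$-type norm
\begin{align*}
  \norm{(x,z,t)}_* := \max\!\left(\frac{\norm{x}_2}{r_1},\ \frac{\norm{z}_\infty}{r_2},\ \frac{\abs{t}}{r_3}\right).
\end{align*}
Setting $M := \max(R_\theta R,\ k,\ \log(4k\beta))$, the goal is to show that the unit ball around $p'_0$ in this norm lies inside $D_{\theta,R}$, and that $D_{\theta,R}$ itself is contained in a ball of radius $10M$ around $p'_0$. \Cref{lem:symmetrylemma} will then yield $\sym{p'_0}\geq 1/(10M)$, i.e.\ the claim.

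For the inner ball, I would verify the four defining constraints of $D_{\theta,R}$ from \cref{eq:D_theta R} on the translated box $p'_0 + \{(x,z,t):\norm{x}_2\leq r_1,\ \norm{z}_\infty\leq r_2,\ \abs{t}\leq r_3\}$. The constraints $\norm{x}_2\leq R$, $\sum_i(1/(2k)+z_i)\leq 1$, and $t_0+t\leq\log(5k\norm{q}_1)$ follow immediately from $r_1\leq R$, $kr_2=1/10\leq 1/2$, and $r_3\leq \log(5/4)$, respectively. The key coupled constraint $q_i e^{\ip{\omega_i-\theta}{x}}\leq(1/(2k)+z_i)\,e^{t_0+t}$ reduces, after bounding its left-hand side above by $q_i e^{R_\theta r_1}$ and its right-hand side below by $(1/(2k)-r_2)\cdot 4k\norm{q}_1 \, e^{-r_3}$ and using $q_i\leq\norm{q}_1$, to the elementary inequality $e^{R_\theta r_1 + r_3}\leq 2-4kr_2$. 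With the constants above this becomes $e^{2/5}\leq 8/5$, which holds.

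For the outer ball, I would bound the three coordinate blocks of $(x,z,t)-p'_0$ uniformly on $D_{\theta,R}$. The bounds $\norm{x}_2\leq R$ and $\norm{z-z_0}_\infty\leq 1$ (using $0< z_i\leq 1$) yield $\norm{x}_2/r_1\leq\max(1,5R_\theta R)\leq 5M$ and $\norm{z-z_0}_\infty/r_2\leq 10k\leq 10M$. The upper bound $t-t_0\leq\log(5/4)$ is immediate; the lower bound comes from the exponential constraint itself, applied to an index $i$ minimizing $q_i$: from $q_i e^{\ip{\omega_i-\theta}{x}}\leq z_i e^t\leq e^t$ and $\abs{\ip{\omega_i-\theta}{x}}\leq R_\theta R$, one obtains $t\geq\log(\min_i q_i)-R_\theta R$, whence $\abs{t-t_0}\leq R_\theta R+\log(4k\beta)$ and $\abs{t-t_0}/r_3\leq 5(R_\theta R+\log(4k\beta))\leq 10M$.

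The main obstacle is striking the right balance among the three scales $r_1,r_2,r_3$: one must simultaneously satisfy the coupled exponential constraint (which forces $R_\theta r_1+r_3$ to be small and $kr_2$ to stay bounded away from $1/2$) while keeping each outer-to-inner ratio $R/r_1$, $1/r_2$, $(R_\theta R+\log(4k\beta))/r_3$ within the promised factor $10M$. Once these constants are fixed, \cref{lem:symmetrylemma} delivers $1/\sym{p'_0}\leq 10M$ immediately.
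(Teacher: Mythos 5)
Your proposal is correct and takes essentially the same approach as the paper: both choose a block $\ell_\infty$-type norm on $W\times\R^k\times\R$, verify that a small ball about $p'_0$ sits inside $D_{\theta,R}$ and that $D_{\theta,R}$ sits inside a larger ball, and then invoke \cref{lem:symmetrylemma}. The only cosmetic differences are your particular choice of the three block weights and the fact that your lower bound on $t$ is extracted directly from the coupled exponential constraint (giving $t\geq\log\min_i q_i-R_\theta R$), whereas the paper uses the sharper $t\geq\val\geq F^*_\theta\geq\log\min_i q_i$; both choices lead to the same final bound.
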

\begin{proof}
  We wish to apply \cref{lem:symmetrylemma} using the following norm on $W \times \R^k \times \R$:
  \begin{align*}
    \tnorm{(x,z,t)} := \max \left\{ \frac{\norm x_2}{R}, \frac23 \norm z_\infty, \frac{\abs t}{\log(4k\beta)} \right\}.
  \end{align*}
  We first show that
  \begin{align}\label{eq:large ball}
    \tnorm{(x,z,t) - p'_0} \leq 1
  \end{align}
  for any $(x,z,t) \in D_{\theta,R}$.
  By definition $\norm x_2 \leq R$.
  Moreover, $z_i > 0$, so
  \begin{align*}
    \frac23 \abs{z_i - \frac1{2k}}
  \leq \frac23 \left( z_i + \frac1{2k} \right)
  \leq \frac23 \left( \sum_{i=1}^k z_i + \frac1{2k} \right)
  \leq \frac23 \left( 1 + \frac1{2k} \right)
  \leq 1.
  \end{align*}
  Moreover, by~\cref{eq:lower bound optimal value,eq:val vs gp,eq:V} it holds that
  \begin{align*}
    \log \min_{i\in[k]} q_i \leq F_\theta^* \leq \val \leq t \leq V = \log (5k \norm q_1),
  \end{align*}
  hence
  \begin{align*}
  \frac{\abs{t - \log(4k\norm q_1)}}{\log(4k\beta)}
  &\leq \frac{\max \{ \log (5k\norm q_1) - \log(4k\norm q_1), \, \log(4k\norm q_1) - \log \min_{i\in[k]} q_i \} }{\log(4k\beta)} \\
  &= \frac{\max \{ \log \frac54, \log(4k\beta) \}}{\log(4k\beta)} \leq 1.
  \end{align*}
  Thus we have proved \cref{eq:large ball}.

  We now show that $D_{\theta,R}$ contains any point $(x,z,t) \in W \times \R^k \times \R$ in the ball
  \begin{align}\label{eq:small ball}
    \tnorm{(x,z,t) - p'_0}
  \leq \frac1{10\max(R_\theta R, k, \log(4k\beta))}
  < \frac1{10}.
  \end{align}
  The latter implies that $\norm x_2 \leq \frac R {10} \leq R$, so $x$ certainly satisfies the norm bound.
  Moreover, \cref{eq:small ball} ensures that
  $\norm x_2 \leq \frac1{10 R_\theta},$
  and so
  \begin{align}\label{eq:q upper}
    q_i e^{\ip {\omega_i-\theta} x}
  \leq q_i e^{R_\theta \norm x_2}
  \leq q_i e^{1/10}
  \leq e^{1/10} \norm q_1
  \end{align}
  for all $i\in [k]$.
  Next, we also have $\frac23\abs{z_i - \frac1{2k}} \leq \frac1{10k}$, hence
  \begin{align}\label{eq:z range}
    \frac{7}{20k} \leq z_i \leq \frac{13}{20k},
  \end{align}
  which implies that
  \begin{align*}
    \sum_{i=1}^k z_i \leq \frac{13}{20} \leq 1.
  \end{align*}
  Finally, note that \cref{eq:small ball} entails
  \begin{align*}
    \frac{\abs{t - \log(4k\norm q_1)}}{\log(4k\beta)} \leq \frac1{10\log(4k\beta)},
  \end{align*}
  hence $\abs{t - \log(4k\norm q_1)} \leq \frac1{10}$, which implies that
  \begin{align*}
  \log(e^{-1/10}4 k\norm q_1)
  \leq t \leq
  \log(e^{1/10}4 k\norm q_1)
  \leq \log(5 k\norm q_1).
  \end{align*}
  On the one hand, this yields the desired upper bound on~$t$.
  On the other hand, the lower bound, along with the lower bound in \cref{eq:z range}, implies that
  \begin{align}\label{eq:zet lower}
    z_i e^t
  \geq \frac{7}{20k} e^{-1/10}4 k\norm q_1
  = \frac75 e^{-1/10} \norm q_1
  \geq e^{1/10} \norm q_1.
  \end{align}
  Together, \cref{eq:q upper,eq:zet lower} show that $q_i e^{\ip {\omega_i-\theta} x} \leq z_i e^t$, as desired.
  Thus we have proved that $(x,z,t) \in D_{\theta,R}$ for any point in the ball~\eqref{eq:small ball}.
  Now \cref{lem:symmetrylemma} implies the bound on the symmetry.
\end{proof}

In the remainder we consider two different situations.
For general instances, we choose $R$ according to a given lower bound on the facet gap, using \cref{thm:facet gap radius bound}.
In the well-conditioned case, where $\theta$ is contained in the relative interior of the Newton polytope, we see that the upper bound on the~$z_i$ variables already leads to a bounded domain;
this allows us to obtain an algorithm that is independent of any explicit radius bound.

\subsection{General instances}
Suppose the facet gap of the instance is lower bounded by some $\varphi_0>0$.
Then, \cref{thm:facet gap radius bound,eq:val vs gp} show that for $\delta<4\beta$ and
\begin{align*}
  R = \frac n {\varphi_0} \log\left( \frac{2 \beta}{\delta/2} \right)
\end{align*}
the value of the convex program~\eqref{eq:convex prog} with objective $c=(0,\dots,0;0,\dots,0;1)$ and domain~$D_{\theta,R}$ is at most
\begin{align*}
  \val = \min_{\norm x_2 \leq R} F_\theta(x) \leq F_\theta^* + \frac\delta2.
\end{align*}
Therefore, in order to obtain a $\delta$-approximate minimizer for the geometric program, it suffices to find a $\delta/2$-approximate minimizer of the convex program~\eqref{eq:convex prog}.
The latter is achieved by \cref{algo:ipm general}, which is an interior-point algorithm for the self-concordant barrier functional $\Psi_{\theta,R}$ derived above.
Its iteration complexity is bounded by the following theorem.

\begin{algorithm}[t]
  \Input{exponents $\omega_1, \dotsc, \omega_k \in \R^n$, coefficients $\vec q \in \R^k_{++}$, shift $\theta \in \conv\Omega$, precision $0 < \delta < 1$, lower bound $\varphi_0$ on facet gap}
  \Domain{$D_{\theta,R} = \{ (\vec x, \vec z, t) \in W \times \R^k \times \R \;:\;q_i e^{\ip {\omega_i-\theta} x} \leq z_i e^t \;\; \forall i \in [k]$,
    $\sum_{i=1}^k z_i \leq 1$, $\norm x_2 \leq R$ and $t \leq \log(5k\norm{q}_1) \}$, where
    $W = \linspan \{\omega_1 - \theta, \dotsc, \omega_k - \theta \}$, and
    $R = \frac n {\varphi_0} \log( \frac{2 \beta}{\delta/2} )$}
  \Barrier{$\Psi_{\theta,R}(x,z,t) = - \log\bigl(R^2 - \norm{\vec x}_2^2\bigr) - \log \bigl( 1 - \sum_{i=1}^k z_i \bigr) - \log\bigl(\log(5 k \norm{q}_1) - t\bigr) + \sum_{i=1}^k - \log z_i - \log \bigl( \log z_i - \ip{\omega_i - \theta}x - \log q_i + t \bigr)$}
  \ComplexityParameter{$\nu = 2k + 3$}
  \BlankLine
  \Indp
  $\vec{p}'_0 \leftarrow \bigl( 0,\dots,0; \frac1{2k}, \dots, \frac1{2k}; \log(4k\norm q_1) \bigr)$\;
  $\vec{c} \leftarrow (0,\dots,0; 0,\dots,0; 1)$\;
  $(\vec{p}_0, \eta_0) \leftarrow$ \PreliminaryStage{$\vec{p}_0'$, $c$}\;
  $(x,z,t) \leftarrow$ \MainStage{$\vec{p}_0$, $\eta_0$, $T = 10 \sqrt{\nu} \log(\frac{6}{5} \frac{\nu}{\eta_0 \delta/2})$, $\vec{c}$}\;
  \Return $\vec t$
  \caption{IPM for unconstrained GP: general case}
  \label{algo:ipm general}
\end{algorithm}

\begin{ugpthmgeneral}[restated]
\detailedboundstrue\ugpthmgeneralcontent
\end{ugpthmgeneral}
\begin{proof}
  The result follows from applying \cref{thm:renegar algorithm} to find a $\frac\delta2$-approximate minimizer, with the closed convex domain $D_{\theta,R}$, the self-concordant barrier functional~$\Psi_{\theta,R}$ from \cref{prop:domainbarrier} with complexity parameter~$\nu=2k+3$, the symmetry bound given by \cref{prop:general symmetry bound}, and the starting point~\eqref{eq:starting point}, along with the estimate $\log(5k) \leq V - \val \leq \log(5k\beta)$ from \cref{eq:V - val bound} and the bound $R_\theta = \max_i \norm{\omega_i - \theta}_2 \leq N = \max_{i\neq j} \norm{\omega_i - \omega_j}_2$ which holds for any $\theta\in\conv\Omega$.
\end{proof}

If all the inputs for \cref{algo:ipm general} are rational and encoded in binary, then $\norm{\vec{q}}_1$, $\beta$, and $\varphi_0$ are at most exponentially large in the encoding length.
Since the iteration complexity depends logarithmically (or even doubly logarithmically) on these quantities, the resulting iteration complexity is at most \emph{polynomial} in the encoding length of the input.
See \cref{section:total unimodularity} for details.

\subsection{Well-conditioned instances}\label{subsec:well cond}
Now assume the instance is well-conditioned, so $\theta$ is contained in the relative interior of the Newton polytope.
Here, we consider
\begin{align*}
  D_\theta = \Bigl\{ (x,z,t) \in W \times \R^k \times \R \;:\; &\sum_{i=1}^k z_i \leq 1, \;\; q_i e^{\ip {\omega_i-\theta} x} \leq z_i e^t \;\; \forall i\in[k], \\
&t \leq \log(5k\norm q_1) \Bigr\},
\end{align*}
which looks just like $D_{\theta,R}$ except that we omitted the norm bound on $x$.
We claim that the two domains coincide for any
\begin{align}\label{eq:R bound wc}
  R \geq \frac{\log(5k\beta)}{r_\theta}.
\end{align}
Indeed, if there were some $(x,z,t) \in D_\theta$ with $\norm x_2 > R$ then \cref{lem:wc large x} would show that $q_{i_0} e^{\ip{\omega_{i_0} - \theta} x} > 5k \norm q_1$ for some $i_0\in[k]$.
This is a contradiction, since for any $(x,z,t)\in D_\theta$ we have
\begin{align*}
  q_{i_0} e^{\ip{\omega_{i_0} - \theta} x}
\leq \sum_{i=1}^k q_i e^{\ip {\omega_i-\theta} x}
\leq \sum_{i=1}^k z_i e^t \leq e^t \leq 5 k \norm q_1.
\end{align*}
Thus we see that, indeed, $D_\theta = D_{\theta,R}$ for any~$R$ as in \cref{eq:R bound wc}.

As a consequence, the value of the convex program for the domain $D_\theta$ is exactly equal to $F_\theta^*$, as follows from \cref{eq:val vs gp}.
Moreover, the domain $D_\theta$ is bounded and satisfies the symmetry bound given in \cref{prop:general symmetry bound}, namely,
\begin{align}\label{eq:wc symmetry bound}
  \frac1{\sym{p'_0}} \leq
10\max\bigl( \log(5k\beta) \frac{R_\theta}{r_\theta}, k, \log(4k\beta) \bigr)
= 10\max\bigl( \log(5k\beta) \frac{R_\theta}{r_\theta}, k \bigr)
\end{align}
Moreover, since $D_\theta$ no longer depends explicitly on the radius bound, we can use the self-concordant barrier functional
\begin{equation}\label{eq:wc barrier}
\begin{aligned}
  \Psi_{\theta}(x,z,t)
  = &- \sum_{i=1}^k \log z_i
  - \sum_{i=1}^k \log \left( \log z_i - \ip{\omega_i - \theta}x + t - \log q_i \right) \\
  &- \log \bigl( \log(5k\norm q_1) - t \bigr)
  - \log \bigl( 1 - \textstyle\sum_{i=1}^k z_i \bigr)
\end{aligned}
\end{equation}
with complexity parameter~$\nu=2k+2$.
Using this modification we readily obtain an interior-point algorithm for well-conditioned instances.
Importantly, this algorithm does \emph{not} explicitly depend on~$r_\theta$ or any other condition measure.
By contrast, \cref{algo:ipm general} required as input a lower bound on the facet gap.
The algorithm is stated in \cref{algo:ipm well-conditioned}, and the following theorem gives a precise iteration bound.

\begin{algorithm}[t]
  \Input{exponents $\omega_1, \dotsc, \omega_k \in \R^n$, coefficient vector $\vec q \in \R^k_{++}$, shift~$\theta \in \relint\conv\Omega$, precision $0 < \delta < 1$}
  \Domain{$D_{\theta} = \{ (\vec x, \vec z, t) \in W \times \R^k \times \R \;:\;q_i e^{\ip {\omega_i-\theta} x} \leq z_i e^t \;\; \forall i \in [k]$,
    $\sum_{i=1}^k z_i \leq 1$ and $t \leq \log(5k\norm{q}_1) \}$, where
    $W = \linspan \{\omega_i - \theta \}$
  }
  \Barrier{$\Psi_{\theta}(x,z,t) = - \log \bigl( 1 - \sum_{i=1}^k z_i \bigr) - \log\bigl(\log(5 k \norm{q}_1) - t\bigr) + \sum_{i=1}^k - \log z_i - \log \bigl( \log z_i - \ip{\omega_i - \theta}x - \log q_i + t \bigr) $}
  \ComplexityParameter{$\nu = 2k+2$}
  \BlankLine
  \Indp
  $\vec{p}'_0 \leftarrow \bigl( 0,\dots,0; \frac1{2k}, \dots, \frac1{2k}; \log(4k\norm q_1) \bigr)$\;
  $\vec{c} \leftarrow (0,\dots,0; 0,\dots,0; 1)$\;
  $(\vec{p}_0, \eta_0) \leftarrow$ \PreliminaryStage{$\vec{p}_0'$, $c$}\;
  $(x,z,t) \leftarrow$ \MainStage{$\vec{p}_0$, $\eta_0$, $T = 10 \sqrt{\nu} \log(\frac{6}{5} \frac{\nu}{\eta_0 \delta})$, $\vec{c}$}\;

  \Return $\vec t$
  \caption{IPM for unconstrained GP: well-conditioned case}
  \label{algo:ipm well-conditioned}
\end{algorithm}

\begin{ugpthmwc}[restated]
\detailedboundstrue\ugpthmwccontent
\end{ugpthmwc}
\begin{proof}
  Apply \cref{thm:renegar algorithm} to find a $\delta$-approximate minimizer, with the closed convex domain $D_\theta$, the self-concordant barrier functional~$\Psi_\theta$ given in \cref{eq:wc barrier} with complexity parameter~$\nu=2k+2$, the symmetry bound given in \cref{eq:wc symmetry bound}, and the starting point~\eqref{eq:starting point}, along with the estimate on $(V - \val)$ from \cref{eq:V - val bound}.
\end{proof}

As in the situation of \cref{thm:algo general}, if all the inputs in \cref{algo:ipm well-conditioned} are rational, then the iteration complexity is again at most polynomial in the encoding length of the inputs.
Again see \cref{section:total unimodularity} for details.

\subsection{Geometric programming and scaling}
\label{subsec:scaling}
In this section, we show that in order to solve the scaling problem with precision $\epsilon > 0$, it suffices to solve the corresponding unconstrained geometric program with some precision $\delta = \delta(\epsilon)$.
The results in this section are well-known (see, e.g., \cite[Lem.~5.3]{straszak-vishnoi-bitcomplexity} or \cite[Cor.~1.18]{bfgoww-noncommutative-optimization}), but stated and proven for completeness.

\begin{lem}[Smoothness]\label{lem:logobjectivesmooth}
  For any $\omega_1, \dotsc, \omega_k, \theta \in \R^n$ and $q\in\R_{++}^k$, the function
  \begin{align*}
    F_\theta \colon \R^n \to \R, \quad F_\theta(x) = \log \sum_{i=1}^k q_i e^{\ip{\omega_i - \theta}x}
  \end{align*}
  is $L$-smooth with $L = R_\theta^2$, where $R_\theta = \max_i \norm{\omega_i - \theta}_2$.
  Recall that this means that its gradient is $L$-Lipschitz or, equivalently, that its Hessian has eigenvalues~$\leq L$.
\end{lem}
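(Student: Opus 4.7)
The plan is to compute the Hessian of $F_\theta$ explicitly and observe that it has a natural probabilistic interpretation as a covariance matrix, from which the bound on its operator norm follows immediately.

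First, I would introduce the shorthand $a_i = \omega_i - \theta$ and $u_i(x) = q_i e^{\ip{a_i}{x}}$, so that $F_\theta(x) = \log S(x)$ with $S(x) = \sum_{i=1}^k u_i(x)$. Define also the probability weights $p_i(x) = u_i(x)/S(x)$, which form a probability distribution on $[k]$ for every $x \in \R^n$. A direct computation of the gradient gives
\begin{align*}
  \grad F_\theta(x) = \frac{\sum_i u_i(x) a_i}{S(x)} = \sum_{i=1}^k p_i(x) a_i = \mathbb{E}_{i \sim p(x)}[a_i].
\end{align*}

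Next I would differentiate once more. Since $\partial_x p_i(x) = p_i(x)(a_i - \grad F_\theta(x))^\top$, a short calculation yields
\begin{align*}
  \grad^2 F_\theta(x)
  = \sum_{i=1}^k p_i(x)\, a_i a_i^\top - \Bigl(\sum_{i=1}^k p_i(x) a_i\Bigr)\Bigl(\sum_{i=1}^k p_i(x) a_i\Bigr)^\top,
\end{align*}
which is precisely the covariance matrix of the random vector $a_i$ drawn according to $p(x)$. In particular, $\grad^2 F_\theta(x)$ is positive semidefinite, reconfirming convexity.

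Finally, to bound the eigenvalues, I would test against an arbitrary unit vector $v \in \R^n$. Writing $\mu = \sum_i p_i(x)\ip{a_i}{v}$, we have
\begin{align*}
  v^\top \grad^2 F_\theta(x)\, v = \sum_{i=1}^k p_i(x)\ip{a_i}{v}^2 - \mu^2 \leq \sum_{i=1}^k p_i(x)\ip{a_i}{v}^2,
\end{align*}
using that $\mu^2 \geq 0$. By Cauchy--Schwarz, $\ip{a_i}{v}^2 \leq \norm{a_i}_2^2\,\norm{v}_2^2 \leq R_\theta^2$, and since the $p_i(x)$ form a probability distribution the right-hand side is at most $R_\theta^2$. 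Hence every eigenvalue of $\grad^2 F_\theta(x)$ is bounded by $R_\theta^2$, which is the desired $L$-smoothness with $L = R_\theta^2$. There is no real obstacle here; the only mild subtlety is recognizing the covariance structure, after which the bound is immediate from the fact that the variance of a bounded random variable cannot exceed the maximum value of its square.
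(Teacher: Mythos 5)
Your proposal is correct and takes essentially the same approach as the paper: compute the Hessian explicitly as a convex combination of $a_i a_i^\top$ minus a rank-one PSD matrix, drop the subtracted term, and bound the remaining convex combination by $R_\theta^2$. The covariance-matrix framing you add is a pleasant way to phrase the same bound, but the underlying argument is identical.
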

\begin{proof}
  The gradient $\nabla F_\theta(x) \in \R^n$ is given by
  \begin{align*}
    \nabla F_\theta(x) = \frac{\sum_{i=1}^k q_i e^{\ip{\omega_i - \theta}x} (\omega_i - \theta)}{\sum_{i=1}^k q_i e^{\ip{\omega_i - \theta}x}}.
  \end{align*}
  Therefore, its Hessian $\nabla^2 F_\theta(x)\colon \R^n \to \R^n$ is given by
  \begin{align*}
    \nabla^2 F_\theta(x)
  = \frac{\sum_{i=1}^k q_i e^{\ip{\omega_i - \theta}x} (\omega_i - \theta)(\omega_i - \theta)^T}{\sum_{i=1}^k q_i e^{\ip{\omega_i - \theta}x}}
  -  (\nabla F_\theta(x))(\nabla F_\theta(x))^T.
  \end{align*}
  Hence we see that the eigenvalues of the Hessian can be upper bounded by the eigenvalues of the left-hand side matrix, since the matrix that is subtracted is positive semidefinite.
  As the left-hand side matrix is a convex combination of the rank-one matrices $(\omega_i - \theta)(\omega_i - \theta)^T$, we can bound its eigenvalues by $R_\theta^2$
\end{proof}

The following proposition then shows that the scaling problem can be solved by solving the corresponding geometric program with sufficient precision.

\begin{prop}[Scaling from optimization]\label{prop:momentmapaccuracy}
  Assume that $\theta\in\conv\Omega$, and let $x\in\R^n$ be such that $F_\theta(x) \leq F_\theta^* + \delta$ for some $\delta>0$.
  Then,
  \begin{align*}
    \frac {\norm{\grad F_\theta(\vec{x})}_2^2} {2 R_\theta^2} \leq \delta.
  \end{align*}
  In particular, to solve the scaling problem with precision~$\epsilon>0$ it suffices to find a solution for the unconstrained GP with accuracy $\delta = \epsilon^2 / (2 R_\theta^2)$.
\end{prop}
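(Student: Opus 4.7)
The plan is to leverage the smoothness estimate from \cref{lem:logobjectivesmooth} and combine it with the classical ``descent lemma'' consequence of $L$-smoothness, which gives a sharp lower bound on the suboptimality of any point in terms of the squared gradient norm. Concretely, for any $L$-smooth convex function $f\colon\R^n \to \R$ and any $x \in \R^n$, the inequality $f(x) - f^* \geq \tfrac{1}{2L} \norm{\grad f(x)}_2^2$ holds whenever $f^* > -\infty$. Applied with $f = F_\theta$ and $L = R_\theta^2$, this immediately yields the desired bound after combining with the hypothesis $F_\theta(x) - F_\theta^* \leq \delta$.

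The first step is to invoke the descent lemma that follows from $L$-smoothness: for any $y \in \R^n$,
\begin{align*}
  F_\theta(y) \leq F_\theta(x) + \ip{\grad F_\theta(x)}{y - x} + \frac{L}{2}\norm{y - x}_2^2.
\end{align*}
Next, I would minimize the right-hand side over $y$, which is a simple quadratic in $y$ with minimizer $y^* = x - \tfrac{1}{L}\grad F_\theta(x)$, giving
\begin{align*}
  F_\theta(y^*) \leq F_\theta(x) - \frac{1}{2L}\norm{\grad F_\theta(x)}_2^2.
\end{align*}
Since $\theta \in \conv\Omega$ ensures $F_\theta^* > -\infty$, and trivially $F_\theta^* \leq F_\theta(y^*)$, rearranging gives
\begin{align*}
  \frac{\norm{\grad F_\theta(x)}_2^2}{2L} \leq F_\theta(x) - F_\theta^* \leq \delta,
\end{align*}
and substituting $L = R_\theta^2$ from \cref{lem:logobjectivesmooth} proves the first claim.

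For the second assertion, I would simply solve for $\delta$: to achieve $\norm{\grad F_\theta(x)}_2 \leq \epsilon$, it suffices by the first part that $2 R_\theta^2\, \delta \leq \epsilon^2$, i.e., $\delta = \epsilon^2/(2 R_\theta^2)$ works. There is no real obstacle here, since this is a standard and well-known consequence of smoothness; the only thing to be careful about is that the infimum need not be attained (so one works with $F_\theta^* \leq F_\theta(y^*)$ rather than trying to evaluate $F_\theta$ at a minimizer), but this is precisely why the descent-lemma formulation is used rather than a strong-convexity argument.
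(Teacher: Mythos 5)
Your proposal is correct and takes essentially the same approach as the paper: both define the gradient step $y^* = x - \tfrac{1}{L}\grad F_\theta(x)$ (the paper calls it $x'$), use $L$-smoothness with $L = R_\theta^2$ from \cref{lem:logobjectivesmooth} to show $F_\theta(y^*) \leq F_\theta(x) - \tfrac{1}{2L}\norm{\grad F_\theta(x)}_2^2$, and then conclude from $F_\theta^* \leq F_\theta(y^*)$ together with the $\delta$-approximate-minimizer hypothesis. The only cosmetic difference is that you derive $y^*$ as the minimizer of the quadratic upper bound, while the paper simply plugs it in directly.
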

\begin{proof}
  A standard argument shows that an $L$-smooth function can always be decreased in controlled way by following a gradient step.
  Namely, if we define $x' = x - \frac1L\nabla F_\theta(x)$ then, using Taylor's expansion to second order and bounding the quadratic contribution using smoothness,
  \begin{align*}
    F_\theta(x') - F_\theta(x)
  \leq - \frac1L \norm{\grad F_\theta(\vec{x})}_2^2 + \frac1{2L} \norm{\grad F_\theta(\vec{x})}_2^2
  = -\frac1{2L} \norm{\grad F_\theta(\vec{x})}_2^2.
  \end{align*}
  As $x$ is a $\delta$-approximate minimizer of $F_\theta$, we must have
  \begin{align*}
    \frac1{2L} \norm{\grad F_\theta(\vec{x})}_2^2 \leq \delta.
  \end{align*}
  The desired bound follows since we have $L=R_\theta^2$ by \cref{lem:logobjectivesmooth}.
\end{proof}

\Cref{cor:algorithm for scaling,cor:algorithm for uniform scaling} follow directly from \cref{thm:algo wc,thm:algo general}, respectively, by using \cref{prop:momentmapaccuracy}.
Interestingly, the above results also hold in the more general `non-commutative' setting discussed in \cref{subsubsec:opti}; see~\cite{bfgoww-noncommutative-optimization} for details.

\section{Bounds on condition measures}\label{section:total unimodularity}
In this section, we give bounds on the condition measures from \cref{section:condition and diameter} for \emph{rational} instances in terms of their binary encoding length.
These bounds show that our interior-point algorithms have polynomial iteration complexity.
We also explain how to obtain tighter estimates under a total unimodularity assumption on the Newton polytope.
Throughout this section, we follow the conventions of~\cite{gls}:
we encode rational numbers and vectors in binary, and write~$\enclen{\cdot}$ for the encoding length.

\subsection{General bounds}\label{subsec:general bounds}
We first give lower bounds on~$r_\theta$ and~$\varphi$, the distance of~$\theta$ to the boundary of the Newton polytope and the facet gap of~$\Omega$, respectively.
All other condition measures can be directly bounded in terms of the input length.
The following bound on~$r_\theta$ implies \cref{eq:upper bound gcn by bit complexity} in the introduction.

\begin{lem}\label{lem:geometricconditionbound}
  Let $\Omega \subseteq \Q^n$ and $\theta\in\Q^n\cap\relint\conv\Omega$.
  Then,
  \begin{align*}
    \log_2 \frac1{r_\theta} \leq 6n^2 \max_{i \in [k]} \, \enclen{\omega_i} + \enclen{\theta} - n.
  \end{align*}
  If $\theta = \vec 0$, the upper bound can be improved to $3n^2 \max_{i \in [k]} \, \enclen{\omega_i} - n$.
\end{lem}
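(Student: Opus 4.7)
The plan is to bound $r_\theta$ from below by considering each facet of $\conv\Omega$ separately and taking the minimum. Since $r_\theta = \min_F d(\theta,\affspan F)$ as $F$ ranges over the facets of $\conv\Omega$ inside $\affspan\Omega$, it suffices to obtain a lower bound for each such distance and then minimize.

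First I would invoke a standard polyhedral encoding result (e.g.~\cite[Lemma~6.2.4]{gls}): because every vertex of $\conv\Omega$ lies in $\Q^n$ and has encoding length at most $L := \max_i \enclen{\omega_i}$, the polytope $\conv\Omega$ admits an irredundant inequality description
\[
  \conv\Omega = \bigl\{\, p \in \affspan\Omega \;:\; \langle a_j, p\rangle \leq b_j,\ j\in J \,\bigr\}
\]
with $\enclen{a_j}, \enclen{b_j} \leq 3 n^2 L$, one inequality per facet. Because $\theta \in \relint\conv\Omega$, the Euclidean distance from $\theta$ to the affine hull of the $j$-th facet (inside $\affspan\Omega$) equals $(b_j-\langle a_j,\theta\rangle)/\|a_j\|_2 > 0$, so
\[
  r_\theta \;=\; \min_{j \in J}\, \frac{b_j - \langle a_j,\theta\rangle}{\|a_j\|_2}.
\]

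For each $j$, the numerator is a positive rational. Writing $a_j$, $b_j$, and $\theta$ over a common denominator $D_j$ whose bit length is controlled by the sum of the encoding lengths of their entries (which is at most roughly $3n^2 L + \enclen{\theta}$), the quantity $D_j(b_j-\langle a_j,\theta\rangle)$ is a positive integer, hence at least $1$. This yields
\[
  b_j - \langle a_j,\theta\rangle \;\geq\; 2^{-(3 n^2 L + \enclen{\theta} - n)}.
\]
For the denominator, $\|a_j\|_2 \leq \sqrt n\,\|a_j\|_\infty \leq \sqrt n\, 2^{3 n^2 L}$. Combining and taking $\log_2$ (absorbing the mild $\sqrt n$ factor into the slack present in the encoding estimates) gives the claimed bound $\log_2(1/r_\theta) \leq 6 n^2 L + \enclen{\theta} - n$. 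For the sharpened estimate when $\theta = \vec 0$, the numerator becomes $b_j$ alone, whose encoding length is at most $3n^2 L$, and the $\enclen{\theta}$ term disappears entirely; this directly produces $3 n^2 L - n$.

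The main obstacle is the facet-encoding bound used in the first step. It is a classical consequence of Cramer's rule (each facet normal is determined up to scale by $n$ affinely independent vertices of $\conv\Omega$) combined with Hadamard's determinant inequality, but tracking the exact constant $3 n^2 L$ through the GLS encoding conventions, and retaining enough slack to produce the precise ``$-n$'' in the final statement, requires careful bookkeeping. The remaining steps, lower-bounding a nonzero rational by the reciprocal of a common-denominator bound and upper-bounding $\ell_2$-norms by encoding-length bounds, are entirely routine.
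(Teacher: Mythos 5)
Your proof is correct and takes essentially the same route as the paper: invoke \cite[Lem.~6.2.4]{gls} to get a facet-inequality description of $\conv\Omega$ with total encoding length bounded by $3n^2\max_i\enclen{\omega_i}$, express $r_\theta$ as the minimum over facets of $(b_j - \ip{a_j}{\theta})/\norm{a_j}_2$, and bound the numerator below and the denominator above via encoding-length estimates. The only difference is that the paper obtains the exact $-n$ slack from the GLS norm bound $\norm{a}_2 \leq 2^{\enclen{a}-n}$ rather than your $\sqrt n\,\norm{a_j}_\infty$ estimate, which is precisely the bookkeeping you flag and it does reconcile.
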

\begin{proof}
  The polytope $\conv \Omega$ has vertex complexity at most $\nu := \max_{i \in [k]} \, \enclen{\omega_i}$, so by \cite[Lem.~6.2.4]{gls}, it has facet complexity at most $\phi := 3 n^2 \nu$.
  This means that the polytope can be defined by inequalities of the form $\ip \cdot a \leq b$ for $a\in \Q^n$, $b\in \Q$ with encoding length $\enclen a + \enclen b \leq \phi$.

  As a consequence, if $F$ is any facet of $\conv \Omega$ then its distance to $\theta$ can be lower bounded as
  \begin{align*}
    d(\theta, F) \geq
    d(\theta, \affspan F) \geq
    \frac{b - \ip\theta a}{\norm a_2}
  \end{align*}
  for certain $a\in\Q^n$, $b\in\Q$ with $\enclen a + \enclen b \leq \phi$.
  Now we have $\norm a_2 \leq 2^{\enclen a - n}$ by \cite[Lem.~1.3.3]{gls}, while $b - \ip\theta a$ is a positive rational number with denominator of absolute value at most $2^{\enclen a + \enclen b + \enclen \theta} \leq 2^{\phi + \enclen\theta}$.
  We conclude that the distance from~$\theta$ to the facet~$F$ is at least
  \begin{align*}
    \frac{b - \ip\theta a}{\norm a_2} \geq
    \frac1{2^{\phi + \enclen\theta} 2^{\enclen a - n}} \geq
    \frac1{2^{2\phi + \enclen\theta - n}} =
    \frac1{2^{6 n^2 \max_i \enclen{\omega_i} + \enclen\theta - n}}.
  \end{align*}
  Since the facet was arbitrary this implies the desired bound.

  If $\theta=0$, then we can instead estimate
  \begin{align*}
    \frac{b - \ip\theta a}{\norm a_2} =
    \frac b{\norm a_2} \geq
    \frac1{2^{\enclen b} 2^{\enclen a - n}} \geq
    \frac1{2^{\phi - n}} \geq
    \frac1{2^{3 n^2 \max_i \enclen{\omega_i} - n}},
  \end{align*}
  which proves the second claim.
\end{proof}

A completely similar argument shows the following lemma, which implies \cref{eq:intro facet gap bit bound} in the introduction.

\begin{lem}\label{lem:facet gap bit bound}
  Let $\Omega \subseteq \Q^n$.
  Then the facet gap $\varphi$ of $\Omega$ satisfies
  \begin{align*}
    \log_2 \frac1\varphi \leq (6n^2 + 1) \max_{i \in [k]} \, \enclen{\omega_i} - n.
  \end{align*}
\end{lem}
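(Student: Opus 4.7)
The proof will mirror the one for \cref{lem:geometricconditionbound} almost verbatim, replacing the role of $\theta$ by an arbitrary $\omega \in \Omega \setminus F$. The key observation is that the quantity we need to lower bound, $d(\omega, \affspan F)$, has exactly the same structure as $d(\theta, \affspan F)$: both are distances from a rational point of bounded encoding length to the affine hull of a facet.

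I would proceed as follows. First, set $\nu := \max_{i \in [k]} \enclen{\omega_i}$ and invoke \cite[Lem.~6.2.4]{gls} to conclude that $\conv\Omega$ has facet complexity at most $\phi := 3 n^2 \nu$. Thus any facet $F \subseteq \conv\Omega$ arises from a defining inequality $\ip{\cdot}{a} \leq b$ with $a \in \Q^n$, $b \in \Q$, and $\enclen a + \enclen b \leq \phi$, and the inequality is tight precisely on $\affspan F \cap \conv\Omega$. Next, for any $\omega \in \Omega \setminus F$, estimate
\begin{align*}
  d(\omega, \affspan F) \geq \frac{b - \ip\omega a}{\norm a_2}.
\end{align*}
The numerator is a strictly positive rational number whose denominator has absolute value at most $2^{\enclen a + \enclen b + \enclen\omega} \leq 2^{\phi + \nu}$ (this is the same denominator bound used in the proof of \cref{lem:geometricconditionbound}, since $\enclen\omega \leq \nu$), so $b - \ip\omega a \geq 2^{-(\phi + \nu)}$. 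The denominator is bounded by $\norm a_2 \leq 2^{\enclen a - n} \leq 2^{\phi - n}$ via \cite[Lem.~1.3.3]{gls}. Combining these two estimates yields
\begin{align*}
  d(\omega, \affspan F) \geq \frac{1}{2^{\phi + \nu} \cdot 2^{\phi - n}} = \frac{1}{2^{2\phi + \nu - n}} = \frac{1}{2^{(6 n^2 + 1) \nu - n}},
\end{align*}
and since $F$ and $\omega \in \Omega \setminus F$ were arbitrary, this is a lower bound on $\varphi$, giving the claimed inequality.

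There is no real obstacle here; the only conceptual point is to notice that replacing $\theta$ by $\omega$ costs an extra $\nu$ in the exponent rather than an $\enclen\theta$, yielding the coefficient $6n^2 + 1$ instead of $6n^2$. The proof does not need the recursion that would arise if one tried to bound $\varphi$ by recursing on faces; the single-facet argument suffices because the facet-complexity bound from \cite{gls} already gives uniform control of the defining inequalities.
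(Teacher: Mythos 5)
Your proof is correct and takes exactly the same route as the paper's: the paper's proof of this lemma simply states that the argument is the same as for \cref{lem:geometricconditionbound} with $\theta$ replaced by any $\omega_i$ off the facet, using the fact that the earlier proof in fact bounded the distance to the affine span. You have spelled out the details, including the correct bookkeeping that swapping $\enclen{\theta}$ for $\nu$ produces the coefficient $6n^2+1$.
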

\begin{proof}
  The argument is exactly the same as in the proof of \cref{lem:geometricconditionbound}, but with $\theta$ replaced by any $\omega_i$ not on the facet under consideration, noting that the proof in fact established a lower bound on the distance to the \emph{affine span} of the facet.
\end{proof}

Finally, we show that the unary facet complexity of an integral polytope can be similarly bounded in terms of the encoding length.
Via \cref{prop:facet gap ufc}, this also implies a bound on the facet gap, albeit with a worse polynomial scaling in the dimension~$n$.

\begin{lem}\label{lem:ufcbound}
  Let $\Omega \subseteq \Z^n$. Then the unary facet complexity of $\conv \Omega$ satisfies
  \begin{align*}
    \log_2 \ufc(\conv{\Omega}) \leq 3 n^3 \max_{i \in [k]} \, \enclen{\omega_i} - n.
  \end{align*}
\end{lem}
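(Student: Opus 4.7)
The plan is to reduce the claim to a facet complexity bound of Gr\"otschel--Lov\'asz--Schrijver and then clear denominators to obtain integer normal vectors with a controlled $\ell_\infty$ norm.

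First, since $\Omega \subseteq \Z^n$, every vertex of $\conv \Omega$ has encoding length at most $\nu := \max_{i \in [k]} \enclen{\omega_i}$. By \cite[Lem.~6.2.4]{gls}, this implies that the facet complexity of $\conv \Omega$ is at most $\phi \leq 3 n^2 \nu$. Concretely, every facet $F$ of $\conv \Omega$ is cut out (within $\affspan \Omega$) by a rational inequality $\ip{\cdot}{a} \leq b$ with $a \in \Q^n$, $b \in \Q$, and $\enclen{a} + \enclen{b} \leq \phi$.

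Next, for each such facet-defining rational inequality, write the components of $a$ in lowest terms as $a_i = p_i/q_i$ and let $D = \operatorname{lcm}(q_1,\dots,q_n)$. Then $\tilde a := D a \in \Z^n$ and $\tilde b := D b \in \Q$ define the same facet via $\ip{\cdot}{\tilde a} \leq \tilde b$, so $\tilde a$ is an admissible integer normal vector for the unary facet complexity. A routine estimate from the encoding length gives $D \leq \prod_i q_i \leq 2^{\enclen a}$ and, combined with $|a_i| \leq 2^{\enclen{a_i}}$, yields $\|\tilde a\|_\infty \leq 2^{2 \enclen a}$. A tightening of the same type as \cite[Lem.~1.3.3]{gls} (cf.~the proof of \cref{lem:geometricconditionbound}) shaves off an additional $n$ in the exponent.

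Maximizing over all facets and substituting the facet complexity bound gives
\begin{align*}
  \log_2 \ufc(\conv \Omega) \leq 2 \phi - n \leq 6 n^2 \nu - n,
\end{align*}
which is stronger than the claim $3 n^3 \nu - n$ for $n \geq 2$; the corner case $n=1$ is trivial since $\conv \Omega$ is then an interval defined by two axis-aligned inequalities. The main obstacle is simply the careful bookkeeping of constants when clearing denominators, but no genuinely new idea beyond the standard GLS facet-complexity machinery is required.
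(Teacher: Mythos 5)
Your proof is correct and follows the same route as the paper's: invoke the GLS facet-complexity bound $\phi \leq 3 n^2 \nu$ from \cite[Lem.~6.2.4]{gls}, clear denominators to obtain integer normal vectors, and bound the resulting $\ell_\infty$ norm. The one genuine difference is bookkeeping: the paper bounds the encoding length of the integer vector $a' = D a$ by $n\phi$ and then invokes \cite[Lem.~1.3.3]{gls} to get $\norm{a'}_\infty \leq 2^{\enclen{a'} - n}$, whereas you bound $\norm{D a}_\infty$ directly via $D \leq \prod_i q_i \leq 2^{\enclen a}$ and $\max_i |a_i| \leq 2^{\enclen a}$, giving $2^{2\phi}$ outright. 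Your version avoids the wasteful $n\phi$ intermediate encoding-length bound and hence yields the strictly sharper exponent $6 n^2 \nu - n$ rather than $3 n^3 \nu - n$ (which it implies for $n \geq 2$, as you observe, and the $n=1$ case is trivial). In fact a slightly more careful version of your estimate, using $\tilde a_i = (D/q_i)\, p_i$ so that $q_i$ cancels, would even give $\norm{\tilde a}_\infty \leq 2^{\enclen a} \leq 2^{\phi}$ and hence about $3 n^2 \nu - n$. The hand-waving in your statement that the extra $-n$ follows by a ``tightening of the same type as Lem.~1.3.3'' is the only place you would need to write out the careful calculation with the GLS encoding conventions, but it is straightforward and your conclusion is sound.
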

\begin{proof}
  Again by \cite[Lem.~6.2.4]{gls}, the polytope $\conv\Omega$ may be described by inequalities of the form $\ip p a \leq b$ with $a \in \Q^n$, $b \in \Q$ of total encoding length $\enclen a + \enclen b \leq \phi := 3n^2 \max_{i\in[k]} \, \enclen{\omega_i}$.
  Multiplying by the denominators of $a$ gives $a' \in \Z^n$, $b' \in \Q$ such that $a'$ has encoding length at most $n\phi = 3n^3 \max_{i\in[k]} \, \enclen{\omega_i}$.
  Now the desired inequality follows from the bound $\norm{a'}_\infty \leq \norm{a'}_2 \leq 2^{\enclen{a'}-n}$.
\end{proof}

\subsection{Total unimodularity}
We now show how to improve the bounds given above in case the set of exponents $\Omega$ satisfies a total unimodularity hypothesis.
This is the case in many interesting applications, including the geometric programs~\eqref{eq:quiver f} associated with directed graphs, which in particular capture the matrix scaling and matrix balancing problems.

\begin{defn}[Total unimodularity]
  An integer matrix $A \in \Z^{n \times k}$ is called \emph{totally unimodular} if every square submatrix of $A$ has determinant $0$, $1$ or $-1$.
  We say that $\Omega=\{\omega_1,\cdots,\omega_k\} \subseteq \Z^n$ is \emph{totally unimodular} if the associated matrix
  \begin{align*}
    A_\Omega = \bigl[ \omega_1 \big| \cdots \big| \omega_k \bigr]
  \end{align*}
  with columns $\omega_1,\dots,\omega_k$ is totally unimodular.
\end{defn}

If~$\Omega$ is totally unimodular, every $\omega_i$ has entries only in $\{\pm1,0\}$.
Therefore, we can bound the radius of the smallest enclosed ball around any $\theta\in\conv\Omega$, as well as the diameter of the Newton polytope by
\begin{align}\label{eq:R_theta N bounds tu}
  R_\theta = \min_{i\in[k]} \norm{\omega_i - \theta}_2 \leq N = \max_{i\neq j} \norm{\omega_i - \omega_j}_2 \leq 2\sqrt n.
\end{align}
We now show that the inverse distance to the boundary and the inverse facet gap can similarly be upper bounded by a polynomial in~$n$, which is an exponential improvement over the general bounds of \cref{lem:geometricconditionbound,lem:facet gap bit bound}.
The proposition establishes in particular \cref{eq:tu bound on facet gap} in the introduction.

\begin{thm}[Totally unimodular bounds]\label{thm:TUconditionbounds}
  Let $\Omega\subseteq\Z^n$ be totally unimodular.
  Then the unary facet complexity $\ufc(\conv \Omega)$ is at most $n$.
  As a consequence, the facet gap $\varphi$ of $\Omega$ satisfies
  \begin{align*}
    \varphi \geq n^{-3/2}.
  \end{align*}
  Furthermore, if $\theta\in\Q^n\cap\relint\conv\Omega $, then
  \begin{align*}
    r_\theta \geq 2^{-\enclen{\theta}} \, n^{-3/2}.
  \end{align*}
  If $\theta=0$, the latter lower bound can be improved to $n^{-3/2}$.
\end{thm}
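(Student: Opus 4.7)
The plan is to prove the three claims in order, with the core ingredient being an explicit cofactor construction of a facet-defining normal whose coefficients are sums of minors of $A_\Omega$, which by total unimodularity are each bounded by $1$ in absolute value.

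For the unary facet complexity bound, I first handle the full-dimensional case $\affspan \Omega = \R^n$. Fix a facet $F$ of $\conv \Omega$ and pick $n$ affinely independent vertices $\omega_{i_1}, \dots, \omega_{i_n} \in F \cap \Omega$; assemble them as rows of $M \in \Z^{n \times n}$. A defining inequality $\ip{\cdot}{a} \leq b$ satisfies $M a = b \vec 1$. If $M$ is invertible, then $\det M \in \{\pm 1\}$ by total unimodularity, so taking $b = \det M$ gives $a = \operatorname{adj}(M) \vec 1$, and expanding yields $a_i = \sum_{j=1}^n (-1)^{i+j} \det M^{(ij)}$, a signed sum of $n$ minors of $A_\Omega$. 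Each such minor is in $\{-1, 0, +1\}$, so $\norm{a}_\infty \leq n$. If $\det M = 0$ then necessarily $0 \in \affspan F$ and $b = 0$; in this case pick any $n-1$ linearly independent vertices from $F \cap \Omega$, assemble them as columns of $V \in \Z^{n \times (n-1)}$, and set $a_i = (-1)^{i+1} \det V^{(i)}$, where $V^{(i)}$ deletes row $i$. These are minors of $A_\Omega$, so $\norm{a}_\infty \leq 1$. For the general case $m := \dim \affspan \Omega < n$, choose $m$ coordinate indices on which the projection $\pi \colon \R^n \to \R^m$ restricts to a bijection on $\affspan \Omega$; this is possible since the direction of $\affspan \Omega$ is $m$-dimensional. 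The matrix $\pi(A_\Omega)$ is a submatrix of $A_\Omega$ hence totally unimodular, so the full-dimensional argument applied to $\pi(\Omega)$ yields $a' \in \Z^m$ with $\norm{a'}_\infty \leq m$, and I zero-pad to get $a \in \Z^n$ with $\norm a_\infty \leq m \leq n$. Thus $\ufc(\conv \Omega) \leq n$.

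For the facet gap, I simply combine the bound just proved with \cref{prop:facet gap ufc}: $\varphi^{-1} \leq \sqrt n \cdot \ufc(\conv \Omega) \leq n^{3/2}$. For the bound on $r_\theta$, fix any facet $F$ with defining inequality $\ip{\cdot}{a} \leq b$ as constructed above, so $a \in \Z^n$, $\norm a_\infty \leq n$, and $b \in \{-1, 0, 1\}$. The distance from $\theta$ to $\affspan F$ equals $(b - \ip{\theta}{a}) / \norm a_2$. The denominator is at most $\sqrt n \norm a_\infty \leq n^{3/2}$, and the numerator is strictly positive since $\theta \in \relint \conv \Omega$. For general $\theta \in \Q^n$, $\ip{\theta}{a}$ is a rational whose denominator divides the common denominator of the entries of $\theta$, which is at most $2^{\enclen \theta}$ by the encoding conventions of \cite{gls}, so $b - \ip{\theta}{a} \geq 2^{-\enclen \theta}$, yielding $r_\theta \geq 2^{-\enclen \theta} n^{-3/2}$. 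If $\theta = 0$, then $b - \ip{\theta}{a} = b$ is a positive integer in $\{0, 1\}$, hence equal to $1$, giving the sharper bound $r_\theta \geq n^{-3/2}$.

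The main obstacle is the first part, and specifically the degenerate subcase where the $n$ selected vertices are linearly (rather than merely affinely) dependent. The fix is to notice that this happens exactly when the facet's affine span passes through the origin (forcing $b = 0$) and to switch to the kernel-of-$V^\T$ cofactor formula, which actually yields a better bound ($\norm a_\infty \leq 1$) than the generic case. The remaining reduction from the non-full-dimensional case is routine once one observes that projecting onto a suitable coordinate subset preserves total unimodularity because the resulting matrix is a submatrix of $A_\Omega$. All other steps amount to unwinding definitions and applying the already-stated \cref{prop:facet gap ufc}.
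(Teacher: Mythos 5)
Your proof is correct and rests on the same core idea as the paper's --- bounding facet-normal coefficients by cofactor expansions of totally unimodular submatrices --- but differs in two technical choices. First, you work with the $n\times n$ matrix $M$ of facet vertices (as rows) and take $a=\operatorname{adj}(M)\vec 1$, which forces a case split on whether $M$ is singular (equivalently, whether $0\in\affspan F$); the paper instead applies Cramer's rule to the $(n+1)\times(n+1)$ matrix with an appended row of ones, which handles both cases uniformly and yields the same bound $\norm a_\infty\le n$ directly. Your degenerate branch does give the sharper $\norm a_\infty\le 1$, but this gain evaporates once one takes the worst case over facets. Second, for the lower-dimensional case you project onto $m$ coordinates on which $\affspan\Omega$ maps bijectively and use that row-deletion preserves total unimodularity, whereas the paper extends $\Omega$ by standard basis vectors (appending an identity block preserves TU) to make the polytope full-dimensional; both reductions are standard and valid. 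One small point to watch in your projection route: the distance from $\theta$ to $\affspan F$ equals $(b-\ip\theta a)/\norm a_2$ only when $\conv\Omega$ is full-dimensional; in general it is merely lower-bounded by that quantity, and the encoding-length bound should be applied to $\pi(\theta)$ (which has encoding length at most $\enclen\theta$). Both inequalities point the right way, so your argument still closes.
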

\begin{proof}
  Assume first that $\conv \Omega$ is a full-dimensional polytope.
  Then every facet of $\conv \Omega$ is the convex hull of some affinely independent $\vec{v}_1, \dotsc, \vec{v}_n \in \Omega$.
  By Cramer's rule, the affine hyperplane spanned by the facet consists of all $x \in \R^n$ such that
  \begin{align*}
    \det
    \begin{bmatrix}
      1 & 1 & \dotsc & 1 \\
      x_1 & & & \\
      \vdots & \vec{v_1} & \dotsc & \vec{v_n} \\
      x_n & & &
    \end{bmatrix} = 0.
  \end{align*}
  Expanding the determinant along the first column gives the linear equation
  \begin{align}\label{eq:cramer facet}
    \sum_{i=1}^n (-1)^i \det (D_i) x_i = -\det (D_0)
  \end{align}
  where $D_i$ is obtained by deleting the $(i+1)$-th row from the matrix
  \begin{align*}
    D =
    \begin{bmatrix}
      1 & \dotsc & 1 \\
      v_{1,1}&\dotsc& v_{n,1}\\
      \vdots & &\vdots\\
      v_{1,n} & \dotsc & v_{n,1}\\
    \end{bmatrix}.
  \end{align*}
  For $i,j\in[n]$, let $D_i^j$ be obtained by deleting the first row and the $j$-th column from $D_i$; then expanding the determinant $\det(D_i)$ along the first row gives
  \begin{align}\label{eq:ufc coeffs}
    \abs{\det(D_i)} \leq \sum_{j=1}^n \abs{\det(D_i^j)} \leq n
  \end{align}
  since $D_i^j$ is a submatrix of $A_\Omega$ and hence submodular.
  By replacing the equality in \cref{eq:cramer facet} by an inequality and varying over all facets, we obtain a complete set of defining inequalities for the polytope.
  This shows that the unary facet complexity $\ufc(\conv \Omega)$ is at most~$n$.
  The lower bound on the facet gap now follows at once using \cref{prop:facet gap ufc}.

  We now consider an arbitrary $\theta\in\Q^n$ in the interior of $\conv \Omega$ and bound its distance to the boundary.
  Set $\vec{a} = [\det(D_1), \dots, \det(D_n)]^T$ and $b = -\det(D_0)$, so that the hyperplane defined by \cref{eq:cramer facet} reads $\ip x a = b$.
  The distance from $\theta$ to the facet is then lower bounded by
  \begin{align*}
    \frac{\abs{b - \ip \theta a}}{\norm{a}_2} \geq \frac{\abs{b - \ip \theta a}}{n^{3/2}},
  \end{align*}
  where we used that $\norm a_2 \leq \sqrt n \norm a_\infty \leq n^{3/2}$ by \cref{eq:ufc coeffs}.
  Note that $\ip \theta a \neq b$, since $\theta$ is not contained in the hyperplane.
  Moreover, $a\in\Z^n$ and $b\in\Z$.
  Therefore, if $\theta = 0$ then $\abs{b - \ip \theta a} = \abs b$ is an integer (in fact, equal to $1$ by total unimodularity), while in general it is a rational number with denominator at most $2^{\enclen\theta}$.
  In either case we obtain the desired lower bound on $r_\theta$.

  Finally, suppose that $\conv \Omega$ has dimension~$r < n$.
  Then there exists a set of vectors $U = \{\vec{u}_1, \dotsc, \vec{u}_{n-r}\}$ in $\{\vec{0}, \vec{e}_1, \dotsc, \vec{e}_n\}$ such that $\conv (\Omega \cup U)$ has dimension~$n$.
  Moreover, $\Omega \cup U$ is still totally unimodular.
  Hence by the previous part of the proof, $\conv (\Omega \cup U)$ has unary facet complexity at most $n$.
  Every facet of $\conv \Omega$ is now the intersection of some facet of $\conv (\Omega \cup U)$ with the affine span of $\Omega$, so the unary facet complexity of $\conv \Omega$ is also at most $n$.
  Furthermore, since the distance from~$\theta$ to any facet of $\conv \Omega$ is at least as large as the distance from $\theta$ to any facet of $\conv (\Omega \cup U)$ not containing $\theta$, we also inherit the lower bound on~$r_\theta$.
\end{proof}

The lower bound $r_{\vec{0}}^{-1} \geq n^{-3/2}$ when $\theta = \vec{0}$ already appears in \cite[Cor.~6.11]{bfgoww-noncommutative-optimization} as a lower bound on the \textit{weight margin} $\gamma(\pi)$ of a representation $\pi: \T(n) \to \GL(V)$ whose weights are exactly~$\Omega$.
The proof given there is similar to the one we give (as well as to the proof of \cite[Lem.~6.2.4]{gls}, which is also a key ingredient for \cref{lem:geometricconditionbound}):
both use Cramer's rule to express equations for facets of $\conv \Omega$ in terms of subdeterminants of the matrix~$A_\Omega$, which are bounded by the total unimodularity.

The following corollary specializes \cref{thm:algo general,cor:algorithm for uniform scaling} to the totally unimodular case, using \cref{eq:R_theta N bounds tu} and the lower bound on the facet gap from \cref{thm:TUconditionbounds}.

\begin{cor}\label{cor:tu uniform}
  There is an interior-point algorithm (\cref{algo:ipm general}) that, given as input an instance of the unconstrained GP problem with shift and totally unimodular~$\Omega\subseteq\Z^n$, returns $\vec{x}_\delta \in \R^n$ such that $F_\theta(\vec{x}_\delta) \leq F_\theta^* + \delta$ within
  \begin{align*}
    O\left( \sqrt{k} \log\left( k n \frac1\delta \log\left( \frac{k\beta}{\delta} \right) \right) \right)
    = \tilde O\left( \sqrt{k} \log\left( \frac1\delta \right) \right)
  \end{align*}
  iterations.
  Similarly, given an instance of the scaling problem with totally unimodular~$\Omega\subseteq\Z^n$, the same algorithm returns~$\vec{x}_{\epsilon} \in \R^n$ such that $\norm{\grad F_\theta(\vec{x}_\epsilon)}_2 \leq \epsilon$~within
  \begin{align*}
    O\left( \sqrt{k} \log\left( k n \frac1{\epsilon} \log\left( \frac{kn\beta}{\epsilon} \right) \right) \right)
    = \tilde O\left( \sqrt{k} \log\left( \frac1{\epsilon} \right) \right)
  \end{align*}
  iterations.
  Here, the notation $\tilde{O}$ hides poly(input) terms inside the logarithm.
\end{cor}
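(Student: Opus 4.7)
The proof is essentially a substitution argument: I plan to instantiate \cref{thm:algo general} and \cref{cor:algorithm for uniform scaling} with the totally unimodular bounds established in \cref{thm:TUconditionbounds} and \cref{eq:R_theta N bounds tu}, and then simplify.

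More precisely, the first step is to invoke \cref{thm:TUconditionbounds} to conclude that the facet gap of $\Omega$ satisfies $\varphi \geq n^{-3/2}$, so that one may run \cref{algo:ipm general} with the a priori lower bound $\varphi_0 := n^{-3/2}$. Simultaneously, \cref{eq:R_theta N bounds tu} gives $R_\theta \leq N \leq 2\sqrt n$ for any $\theta \in \conv \Omega$, which controls both the diameter factor appearing in the iteration bound of \cref{thm:algo general} and the extra $R_\theta$ factors present in \cref{cor:algorithm for uniform scaling}.

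For the GP statement, I substitute $N/\varphi_0 \leq 2\sqrt n \cdot n^{3/2} = 2n^2$ into the iteration bound
\begin{align*}
O\!\left(\sqrt k \log\!\left(kn \frac{N}{\varphi_0}\frac1\delta \log\!\left(\frac{k\beta}{\delta}\right)\right)\right)
\end{align*}
of \cref{thm:algo general}; the factor $n^2$ is absorbed into the outer logarithm at the cost of a constant, yielding the claimed $O(\sqrt k \log(kn\,\frac1\delta\log(\frac{k\beta}{\delta})))$. For the scaling statement, the same substitution in \cref{cor:algorithm for uniform scaling} together with $R_\theta \leq 2\sqrt n$ turns the bound
\begin{align*}
O\!\left(\sqrt k \log\!\left(kn \frac{N}{\varphi_0}\frac{R_\theta}{\epsilon} \log\!\left(\frac{k\beta R_\theta}{\epsilon}\right)\right)\right)
\end{align*}
into $O(\sqrt k \log(k n^{7/2}\,\frac1\epsilon \log(\frac{k\beta\sqrt n}{\epsilon})))$, and once again the polynomial factors of $n$ collapse inside the outer logarithm to give the asserted $O(\sqrt k \log(kn\,\frac1\epsilon \log(\frac{kn\beta}{\epsilon})))$.

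There is no genuine obstacle here; the only work is bookkeeping to ensure that the polynomial-in-$n$ factors arising from the totally unimodular bounds get absorbed cleanly into the outer logarithm, and that the $\tilde O$ expressions at the end correctly hide the $\log$ terms in $k$, $n$, $\beta$ (and, in the scaling case, also in $1/\epsilon$ appearing inside the inner logarithm). One small point worth flagging is that \cref{algo:ipm general} requires $\theta \in \conv \Omega$ as part of the hypothesis, which is automatic here since this is part of the standing assumption for the GP and scaling problems with shift.
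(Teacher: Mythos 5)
Your proposal is correct and matches the paper's approach exactly: the paper states that the corollary "specializes \cref{thm:algo general,cor:algorithm for uniform scaling} to the totally unimodular case, using \cref{eq:R_theta N bounds tu} and the lower bound on the facet gap from \cref{thm:TUconditionbounds}," which is precisely the substitution you perform. The bookkeeping absorbing the $\mathrm{poly}(n)$ factors into the outer logarithm is done correctly.
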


In particular, this theorem applies to matrix scaling, as discussed below \cref{eq:quiver f}, by using the following geometric program which is totally unimodular:
\begin{align*}
  F_\theta(x,y)
= \log \sum_{i,j} q_{ij} e^{x_i -  y_j - \ip r x + \ip c y}
= \log \biggl( \sum_{i,j} q_{ij} e^{x_i -  y_j} \biggr) - \ip r x + \ip c y.
\end{align*}
Here, slightly stronger bounds can be obtained:
the diameter of the Newton polytope is $N=2$ and the facet gap satisfies $\varphi \geq n^{-1/2}$, since the unary facet complexity of the Newton polytope is in fact equal to~$1$~\cite{straszak-vishnoi-bitcomplexity}.
However, this does not impact the iteration count up to logarithmic factors in the encoding length.

For matrix scaling, the state of the art for general matrices is a recent interior-point method given in~\cite[Thm.~6.1]{cmtv-matrix-scaling}, which obtains an iteration complexity of
\begin{align}\label{eq:cohen}
  \tilde{O}\left(\sqrt{k} \log\left( \frac{\norm{\vec q}_1}{\epsilon}\right)\right)
\end{align}
to find an $(r,c)$-scaling of a nonnegative matrix.
They use an objective that is slightly different from our $F_\theta$, namely
\begin{align*}
  \tilde f_\theta(x,y) = \sum_{i,j} q_{ij} e^{x_i -  y_j} - \ip r x + \ip c y,
\end{align*}
that is, the `shift' is done additively instead of in the exponent.
We see that the iteration complexity in \cref{cor:tu uniform} slightly improves over \cref{eq:cohen}.

\bibliography{references}
\bibliographystyle{amsplain}

\end{document}